\newcommand{\remove}[1]{} 
\newcommand{\N}{\mathbb{N}}
\newcommand{\Q}{\mathbb{Q}}
\newcommand{\Z}{\mathbb{Z}}
\newcommand{\calP}{{\mathcal{P}}}
\newcommand{\calR}{{\mathcal{R}}}
\newcommand{\catC}{{\mathscr{C}}}
\newcommand{\catG}{{\mathscr{G}}}
\newcommand{\SMatII}[4]{\left[\begin{array}{cc} {#1} & {#2} \\ {#3} &
{#4} \end{array}\right]}
\newcommand{\smallSMatII}[4]{\left[\begin{smallmatrix} {#1} & {#2} \\ {#3} &
{#4} \end{smallmatrix}\right]}
\newcommand{\invlim}{\underleftarrow{\lim}\,}
\DeclareMathOperator{\Char}{char}
\DeclareMathOperator{\End}{End} %
\DeclareMathOperator{\Hom}{Hom} %
\DeclareMathOperator{\id}{id}
\DeclareMathOperator{\Jac}{Jac} %
\newcommand{\op}{\mathrm{op}}
\newtheorem{thm}{Theorem}[section]
\newtheorem{lem}[thm]{Lemma}
\newtheorem{prp}[thm]{Proposition}
\newtheorem{cor}[thm]{Corollary}
\newtheorem{dfn}[thm]{Definition}
\newtheorem{baseexample}[thm]{Example} 
\newtheorem{baseremark}[thm]{Remark} 
\newenvironment{example}
{\begin{baseexample}\rm}{\end{baseexample}}
\newenvironment{remark}
{\begin{baseremark}\rm}{\end{baseremark}}
\newcommand{\units}[1]{{#1^\times}}   
\newcommand{\WGr}[2]{{\mathrm{WG}}^{#1}(#2)}      
\newcommand{\Witt}[2][]{\mathrm{W}^{#1}(#2)}      
\newcommand{\WittF}[2][]{\mathrm{W}^{#1}(#2)}     
\newcommand{\Sesq}[1]{\mathrm{Sesq}(#1)}          
\newcommand{\SesqF}[1]{#1}                        
\newcommand{\SysSesq}[2]{\mathrm{Sesq}_{#2}(#1)}  
\newcommand{\Herm}[2][]{\mathrm{UH}^{#1}(#2)}     
\newcommand{\Hyp}[2][]{\mathbb{H}^{#1}_{#2}}      
\newcommand{\Transfer}[1]{\mathrm{T}_{#1}}        
\newcommand{\scalarExt}[2]{\calR_{{#1}/{#2}}}     
\newcommand{\DA}[2][]{{\mathrm{Ar}}_{2#1}({#2})}                    
\newcommand{\TDA}[2][]{{\mathrm{A\widetilde{r}}}_{2#1}({#2})}       
\newcommand{\WGrS}[1]{\mathrm{WG}_S(#1)}          
\newcommand{\WittS}[1]{\mathrm{W}_S(#1)}          
\newcommand{\rMod}[1]{{\mathrm{Mod}\textrm{-}{#1}}} 
\newcommand{\rproj}[1]{\calP(#1)}                   
\newcommand{\rAd}[1]{{#1}_r}      
\newcommand{\lAd}[1]{{#1}_\ell}   
\title[Hermitian Categories]{Hermitian categories, extension of scalars and systems of sesquilinear forms}
\author{Eva Bayer-Fluckiger$^1$}
\author{Uriya A.\ First$^2$}
\author{Daniel A.\ Moldovan$^1$}
\address{$^1$\'{E}cole Polytechnique F\'{e}d\'{e}rale de Lausanne, Switzerland.}
\address{$^2$Hebrew University of Jerusalem, Israel.}
\date{\today}
\thanks{
The second named author is partially supported by an Israel-US BSF grant \#2010/149 and an ERC grant \#226135.
The third named author was partially supported by the Swiss National Science Fundation, grant 200020-109174/1.
}
\subjclass[2010]{11E39, 11E81.}
\keywords{sesquilinear forms, hermitian forms, systems of sesquilinear forms, hermitian categories, $K$-linear categories, scalar extension, Witt group.}
\begin{document}

\maketitle

\begin{abstract}
    We prove that the category of \emph{systems of sesquilinear forms} over a given hermitian
    category is equivalent to the category of \emph{unimodular $1$-her\-mi\-tian forms} over another hermitian category.
    The sesquilinear forms
    are not required to be unimodular or defined on a reflexive object (i.e.\
    the standard map from the object to its double dual is not assumed to be bijective), and the forms in the system
    can be defined with respect to different hermitian structures on the given category. This extends a result
    obtained in \cite{BayerMoldovan12}.

    We use the equivalence to define a Witt group of sesquilinear forms over a
    hermitian category, and also to  generalize various results (e.g.: Witt's Cancelation Theorem,
    Springer's Theorem, the weak Hasse principle, finiteness of genus) to systems
    of sesquilinear forms over hermitian categories.
\end{abstract}

\section*{Introduction}

Quadratic and hermitian forms were studied extensively by various authors, who have developed a rich
array of
tools to study them.
It is well-known that in many cases (e.g.\ over fields), the theory of sesquilinear forms can be reduced to the theory
of hermitian forms (e.g.\ see \cite{Ri74}, \cite{RiSh76} and works based on them).
In the recent paper
\cite{BayerMoldovan12}, an explanation of this reduction was provided in the form of an equivalence
between the category of sesquilinear  forms over a ring and the category of unimodular $1$-hermitian forms over a
special hermitian category.

In this paper, we extend the equivalence of \cite{BayerMoldovan12} to hermitian categories, and
moreover, improve it in such a way that it applies to systems of sesquilinear forms in hermitian
categories that admit \emph{non-reflexive} objects (see section~\ref{section:hermitian-categories}).
That is, we prove that the category of systems of sesquilinear forms over a hermitian category $\catC$ is equivalent
to the category of unimodular $1$-hermitian forms over anther hermitian category $\catC'$. The sesquilinear forms
are not required to be unimodular or defined on a reflexive object, and  the forms in the system
can be defined with respect to different hermitian structures on the category $\catC$.

Using the equivalence, we present a notion of a Witt group of sesquilinear forms, which
is analogous to the standard Witt group of hermitian forms over rings with involution
(e.g.\ see \cite{Kn91} or \cite{Sch85QuadraticAndHermitianForms}).
We also extend various results (Witt's Cancelation Theorem, Springer's Theorem, finiteness
of genus, the Hasse principle, etc.) to systems of sesquilinear forms over
hermitian categories (and in particular to systems of sesquilinear forms over rings with a family of involutions).

\medskip

Sections~\ref{section:sesquilinear-forms} and~\ref{section:hermitian-categories} recall
the basics of sesquilinear forms over rings and hermitian categories, respectively.
In section~\ref{section:equivalence-of-categories}, we prove the equivalence of the category of
sesquilinear forms over a given hermitian category
to a category of unimodular $1$-hermitian forms over another hermitian category, and in section~\ref{section:systems},
we extend this result to systems of sesquilinear forms. Section~\ref{section:applications} presents
applications of the equivalence.

\section{Sesquilinear and Hermitian Forms}
\label{section:sesquilinear-forms}

Let $A$ be a ring. An \emph{involution} on $A$ is an additive map  $\sigma : A \to A$
such that $\sigma(ab) = \sigma(b) \sigma(a)$ for all $a,b \in A$ and $\sigma^2=\id_A$.
Let $V$ be a right $A$-module.
A \emph{sesquilinear form} over $(A,\sigma)$ is a biadditive map
$s : V \times V \to A$ satisfying $s(xa,yb) = \sigma(a) s(x,y) b$ for all $x,y \in V$ and  $a, b \in A$.
The pair $(V,s)$ is also called a sesquilinear form in this case.\footnote{
    Some texts use the term \emph{sesquilinear space}.
}
The \emph{orthogonal sum} of two sesquilinear forms $(V,s)$ and $(V',s')$
is defined to be $(V \oplus V', s \oplus s')$ where $s\oplus s'$ is given by
$$
(s \oplus s')(x \oplus x', y \oplus y') =
s(x,y) + s'(x',y')
$$
for all $x, y \in V$ and $x', y' \in V'$. Two sesquilinear forms $(V,s)$ and $(V', s')$ are called \emph{isometric} if
there exists an isomorphism of $A$-modules $f: V \xrightarrow{\sim} V'$ such that $s'(f(x), f(y))=s(x,y)$ for all $x, y \in V$.

Let $V^* = \Hom_A(V,A)$. Then $V^*$ has a right $A$-module structure given by
$(f\cdot a)(x) = \sigma(a) f(x)$ for all $f \in V^*$, $a \in A$. We say that $V$ is
\emph{reflexive} if the homomorphism of right $A$-modules $\omega_V : V \to V^{**}$ defined by $\omega_V(x)(f) = \sigma(f(x))$ for all $x \in V$, $f \in V^*$
is bijective.

A sesquilinear space $(V,s)$ over $(A, \sigma)$ induces two homomorphisms of right $A$-modules $\lAd{s},\rAd{s}:V \to V^*$ called the
\emph{left} and \emph{right adjoint} of $s$, respectively.
They are given by  $\lAd{s}(x)(y) = s(x,y)$ and
$\rAd{s}(x)(y) = \sigma(s(y,x))$ for all $x, y \in V$. Observe that $\rAd{s}=\lAd{s}^* \omega_V$ and
$\lAd{s}=\rAd{s}^*\omega_V$. The form $s$ is called \emph{unimodular} if $\rAd{s}$ and $\lAd{s}$ are isomorphisms.
In this case, $V$ must be reflexive.

Let $\epsilon=\pm 1$. A sesquilinear form $(V,s)$ over $(A, \sigma)$ is called \emph{$\epsilon$-hermitian} if
$\sigma(s(x,y))=\epsilon s(y,x)$ for all $x, y \in V$, i.e. $\rAd{s}=\epsilon \lAd{s}$. A $1$-hermitian form is also called a hermitian form.

\medskip

There exists a classical notion of Witt group for unimodular $\epsilon$-hermitian forms over $(A, \sigma)$ (e.g.\ see  \cite{Kn91}): Denote by
$\WGr{\epsilon}{A, \sigma}$ the Grothendieck group
of isometry classes of unimodular $\epsilon$-hermitian forms $(V,s)$ over $(A, \sigma)$ with $V$ finitely generated
projective, the addition being  orthogonal sum.
A unimodular $\epsilon$-hermitian form
over $(A, \sigma)$ is called \emph{hyperbolic} if it is isometric to $(V\oplus V^*, \Hyp[\epsilon]{V})$ for
some finitely generated projective right $A$-module $V$, where $\Hyp[\epsilon]{V}$ is defined by
\[
\Hyp[\epsilon]{V}(x\oplus f,y\oplus g)=f(y)+\epsilon \sigma(g(x))\qquad\forall x,y\in V,\,\, f,g\in V^*\ .
\]
We let $\Hyp{V}=\Hyp[1]{V}$.
The quotient of $\WGr{\epsilon}{A, \sigma}$ by the subgroup generated by the unimodular $\epsilon$-hermitian hyperbolic forms is called the
\emph{Witt group of unimodular $\epsilon$-hermitian forms}
over $(A, \sigma)$ and is denoted by $\Witt[\epsilon]{A, \sigma}$.

\medskip

We denote by $\Sesq{A,\sigma}$ (resp.\ $\Herm[\epsilon]{A,\sigma}$) the category of sesquilinear (resp.\ unimodular $\epsilon$-hermitian) forms over $(A, \sigma)$.
The morphisms of these categories are (bijective) isometries. For simplicity, we let $\Herm{A,\sigma}:=\Herm[1]{A,\sigma}$.

\section{Hermitian Categories}
\label{section:hermitian-categories}

This section recalls some basic notions about hermitian categories as presented in \cite{Sch85QuadraticAndHermitianForms} (see also \cite{Kn91},
\cite{QuSchSch79}).

\subsection{Preliminaries}

Recall that a \emph{hermitian category} consists of a triple $(\catC,*,\omega)$ where
$\catC$ is an additive category, $*:\catC\to \catC$ is a contravariant functor and
$\omega=(\omega_C)_{C\in\catC}:\id\to **$ is a natural transformation  satisfying
$\omega^*_C  \omega_{C^*}= \id_{C^*}$ for all $C \in {\catC}$. In this case, the pair $(*,\omega)$
is called a \emph{hermitian structure} on $\catC$. It is customary to assume
that $\omega$ is a natural \emph{isomorphism} rather than a natural \emph{transformation}. Such hermitian categories
will be called \emph{reflexive}. In general, an object $C\in\catC$ for which $\omega_C$ is an isomorphism
is called \emph{reflexive}, so the category $\catC$ is reflexive precisely when all its objects are reflexive.
We will often drop $*$ and $\omega$ from the notation and use these symbols
to denote the functor and natural transformation associated
with any hermitian category under discussion.

A \emph{sesquilinear form} over the category ${\catC}$ is a pair $(C, s)$ with $C\in {\catC}$
and $s: C \rightarrow C^*$. A sesquilinear form $(C, s)$ is called \emph{unimodular} if $s$
and $s^* \omega_C$ are isomorphisms. (If $C$ is reflexive, then $s$ is bijective
if and only if $s^* \omega_C$ is bijective.)
Let $\epsilon=\pm 1$. A sesquilinear form $(C,s)$ is called \emph{$\epsilon$-hermitian}
if $s= \epsilon s^* \omega_C$. For brevity, $1$-hermitian forms are often called hermitian forms. Orthogonal sums of
forms are defined in the obvious way. Let $(C,s)$ and $(C',s')$ be two sesquilinear forms over ${\catC}$.
An \emph{isometry} from $(C,s)$ to $(C',s')$ is an isomorphism
$f : C \xrightarrow{\sim} C'$ satisfying $s = f^* s' f$. In this case, $(C,s)$ and $(C',s')$
are said to be \emph{isometric}. We let $\Sesq{\catC}$ stand for the category of sesquilinear
forms over $\catC$ with isometries as morphisms.

Denote by $\Herm[\epsilon]{\catC}$ the category of unimodular $\epsilon$-hermitian forms over $\catC$.
The morphisms are isometries. For brevity, let $\Herm{\catC}:=\Herm[1]{\catC}$.
The hyperbolic unimodular $\epsilon$-hermitian forms over $\catC$ are the forms isometric to $(Q\oplus Q^*,\Hyp[\epsilon]{Q})$, where
$Q$ is any reflexive object in $\catC$
and $\Hyp[\epsilon]{Q}$ is given by
\[
\Hyp[\epsilon]{Q}=
\SMatII{0}{id_{Q^*}}{\epsilon\omega_Q}{0}:Q\oplus Q^*\to (Q\oplus Q^*)^*=Q^*\oplus Q^{**}\ .
\]
Again, let $\Hyp{Q}=\Hyp[1]{Q}$.
The quotient of $\WGr{\epsilon}{\catC}$, the
Grothendieck group of isometry classes of unimodular $\epsilon$-hermitian forms over $\catC$ (w.r.t.\ orthogonal sum),
by the subgroup generated by the hyperbolic forms is called the Witt group of
unimodular $\epsilon$-hermitian forms over $\catC$ and is denoted by $\Witt[\epsilon]{\catC}$. For brevity, set $\Witt{\catC}=\Witt[1]{\catC}$.

\begin{example}\label{EX:hermitian-category-obtained-from-a-ring-with-involution}
    Let $(A,\sigma)$ be a ring with involution. If we take $\catC$
    to be $\rMod{A}$, the category of  right $A$-modules,
    and define $*$ and $\omega$ as in section \ref{section:sesquilinear-forms},
    then $\catC$ becomes a hermitian category.
    Furthermore, the sesquilinear forms $(M,s)$ over $(A,\sigma)$ correspond to the sesquilinear forms
    over $\catC$ via $(M,s)\mapsto (M,\rAd{s})$. This correspondence gives rise to isomorphisms of categories
    $\Sesq{A,\sigma}\cong\Sesq{\catC}$ and
    $\Herm[\epsilon]{A,\sigma}\cong \Herm[\epsilon]{\catC}$.
    Now let $\catC$ be a subcategory of $\rMod{A}$ such that $M\in\catC$ implies
    $M^*\in \catC$. Then $\catC$ is still a hermitian category and
    it is reflexive if and only if $\catC$ consists of reflexive $A$-modules (as defined in section \ref{section:sesquilinear-forms}).
    For example, this happens if $\catC=\rproj{A}$, the category of
    projective $A$-modules of finite type. In this case, the Witt group
    $\Witt[\epsilon]{\catC}=\Witt[\epsilon]{\rproj{A}}$ is isomorphic to $\Witt[\epsilon]{A,\sigma}$.
\end{example}

\subsection{Duality Preserving Functors}

Let $\catC$ and $\catC'$ be two hermitian categories. A \emph{duality preserving functor} from $\catC$ to
$\catC'$ is an additive functor
$F: \catC \rightarrow \catC'$ together with a natural isomorphism $i=(i_M)_{M \in \catC}:F* \to *F$. This means that for any $M \in \catC$, there exists
an
isomorphism $i_M:F(M^*) \xrightarrow{\sim} (FM)^*$ such that for all $N \in \catC$ and $f \in \Hom_{\catC}(M,N)$, the
following diagram commutes:
$$\xymatrix@!C{
F(N^*) \ar[r]^{F(f^*)} \ar[d]^{i_N} & F(M^*) \ar[d]^{i_M}\\
(FN)^* \ar[r]^{(Ff)^*} & (FM)^*
}$$
Any duality preserving functor induces a functor $\Sesq{\catC}\to\Sesq{\catC'}$, which we also denote
by $\SesqF{F}$. It is given by
$$\SesqF{F}(M, s) = (FM, i_M F(s))$$
for every $(M,s)\in\Sesq{\catC}$. If the functor $F:\catC\to\catC'$ is faithful (resp.\ faithful and full, induces an equivalence),
then so is the functor $\SesqF{F}:\Sesq{\catC}\to \Sesq{\catC'}$.

Let $\lambda=\pm 1$. A duality preserving functor $F$ is called
\emph{$\lambda$-hermitian} if
\[i_{M^*} F(\omega_M)=\lambda i^*_M \omega_{FM}\]
for all $M \in \catC$. Let $\epsilon=\pm 1$.
We recall from
\cite[pp.\ 80-81]{Kn91}  that in this case,
the functor $\SesqF{F}:\Sesq{\catC}\to\Sesq{\catC'}$ maps $\Herm[\epsilon]{\catC}$
to $\Herm[\epsilon \lambda]{\catC'}$ and sends $\epsilon$-hermitian hyperbolic forms
to $\epsilon\lambda$-hermitian hyperbolic forms. Therefore, $F$ induces a homomorphism between the corresponding Witt groups:
$$\WittF[\epsilon]{F}: \Witt[\epsilon]{\catC} \rightarrow \Witt[\epsilon \lambda]{\catC'}.$$
If $F$ is an equivalence of categories,
then $\Sesq{F}:\Herm[\epsilon]{\catC} \to \Herm[\epsilon \lambda]{\catC'}$
is also an equivalence of categories and the induced group homomorphism $\WittF[\epsilon]{F}$ is an isomorphism of groups.

\subsection{Transfer into the Endomorphism Ring}
\label{subsection:transfer}

The aim of this subsection is to introduce the method of \emph{transfer into the endomorphism ring}, which allows us to pass from the abstract setting of hermitian categories to that of a ring with involution,
which is more concrete. This method will be  applied repeatedly in section \ref{section:applications}.
Note that it applies well only to reflexive hermitian categories.

\medskip

Let $\catC$ be a \emph{reflexive} hermitian category, and let $M$ be an
object of $\catC$, on which we suppose that there exists a unimodular $\epsilon_0$-hermitian form $h_0$ for a certain $\epsilon_0=\pm 1$.
Put $E=\End_{\catC}(M)$.
According to \cite[Lm.\ 1.2]{QuSchSch79}, the form $(M,h_0)$ induces on $E$ an involution $\sigma$, defined by
$\sigma(f)=h_0^{-1} f^* h_0$ for all $f \in E$. Let $\rproj{E}$ denote the category of projective right $E$-modules
of finite type. Then, using $\sigma$, we can consider $\rproj{E}$
as a reflexive hermitian category
(see Example~\ref{EX:hermitian-category-obtained-from-a-ring-with-involution}).

Recall that an \emph{idempotent $e \in \End_{\catC}(M)$ splits} if there exist an object $M' \in \catC$ and morphisms
$i:M' \rightarrow M$, $j: M \rightarrow M'$ such that $ji=\id_{M'}$ and $ij=e$.

Denote by $\catC\vert_M$ the full subcategory of $\catC$ consisting of objects of $\catC$ which are isomorphic to a direct summand
of a finite direct sum of copies of $M$. We consider the following functor:
$$\Transfer{}=\Transfer{(M,h_0)}:=\Hom(M,\,\underline{~{}~}\,): \catC\vert_M \rightarrow \rproj{E}$$
$$N \mapsto \Hom(M, N), \qquad\forall N \in \catC\vert_M$$
$$f \mapsto \Transfer{}(f), \qquad\forall f \in {\rm Hom}(N,N'), ~\forall N,N' \in \catC\vert_M,$$
where for all $g \in {\rm Hom}(M,N)$, $\Transfer{}(f)(g)=fg$.
In \cite[Pr.\ 2.4]{QuSchSch79}, it has been proven that the functor $\Transfer{}$
is fully faithful and duality preserving with respect to the natural isomorphism
$i=(i_N)_{N \in \catC\vert_M}:\Transfer{}* \to *\Transfer{}$ given by $i_N(f)=\Transfer{}(h_0^{-1}f^* \omega_N)$ for every $N \in \catC\vert_M$ and $f \in {\rm Hom}(M,N^*)$. In addition,
if all the idempotents of $\catC\vert_M$ split, then $\Transfer{}$ is an equivalence of categories. By computation, we easily see that $\Transfer{}$ is $\epsilon_0$-hermitian.

Note that for any finite list of (reflexive) objects $M_1,\dots, M_t\in\catC$ and any $\epsilon_0=\pm 1$,
there exists a unimodular $\epsilon_0$-hermitian form $(M,h_0)$ such that $M_1,\dots,M_t\in \catC\vert_{M}$.
Indeed, let $N= \bigoplus_{i=1}^tM_i$ and take $(M,h_0)=(N\oplus N^*,\Hyp[\epsilon_0]{N})$.
This means that as long as we treat finitely many hermitian forms, we may pass to the context of hermitian
forms over rings with involution.

\subsection{Linear Hermitian Categories and Ring Extension}
\label{subsection:ring-extension}

In this subsection we introduce the notion of extension of rings in hermitian categories.

\medskip

Let $K$ be a commutative ring. Recall that a \emph{$K$-category} is an additive category $\catC$ such
that for every $A,B\in\catC$, $\Hom_{\catC}(A,B)$ is endowed with a $K$-module structure such that the composition  is $K$-bilinear.
For example, any additive category is in fact a $\Z$-category.
An additive covariant functor $F:\catC\to\catC'$ between two $K$-categories is \emph{$K$-linear} if the map
$F:\Hom_{\catC}(A,B)\to \Hom_{\catC'}(FA,FB)$ is $K$-linear for all $A,B\in\catC$. $K$-linear contravariant functors
are defined in the same manner.
A \emph{$K$-linear hermitian category} is a hermitian category $(\catC,*,\omega)$ such that $\catC$ is a $K$-category
and $*$ is $K$-linear.

Fix a commutative ring $K$.
Let $\catC$ be an additive $K$-category and let $R$ be a $K$-algebra (with unity, not necessarily commutative).
We define the \emph{extension of the category $\catC$ to the ring $R$}, denoted $\catC \otimes_K R$, to be
the category whose objects are formal symbols $C\otimes_K R$ with $C\in \catC$ and its $\Hom$-sets are defined by
\[
\Hom_{\catC\otimes_KR}(A\otimes_K R,B\otimes_K R)=\Hom_{\catC}(A,B)\otimes_C R\ .
\]
The composition in $\catC\otimes_KR$ is defined in the obvious way.
It is straightforward to check that $\catC\otimes_K R$ is also a $K$-category. Moreover, when $R$ is commutative,
$\catC\otimes_K R$ is an $R$-category.
We define the \emph{scalar extension functor}, $\scalarExt{R}{K}:\catC\to \catC\otimes_K R$ by
$$\scalarExt{R}{K}M = M\otimes_K R, \qquad\forall M \in \catC\qquad\textrm{and}$$
$$\scalarExt{R}{K}f = f \otimes_K 1, \qquad\forall f \in \Hom(M,N).$$
The functor $\calR_{R/K}$ is additive and $K$-linear.

In case $K$ is obvious from the context, we write $\catC_R$, $M_R$, $f_R$ instead of
$\catC\otimes_K R$, $M\otimes_K R$, $f\otimes_K 1$, respectively. (Here, $M\in\catC$
and $f$ is a morphism in $\catC$.)

\begin{remark}\label{RM:scalar-extension-in-module-categories}
    The scalar extension we have just defined agrees with scalar extension of modules under mild assumptions, but
    not in general: Let $S$ and $R$ be two $K$-algebras and write $S_R=S\otimes_K R$.
    There is an additive functor
    $G:(\rMod{S})_R\to \rMod{(S_R)}$  given by
    \[G(M_R)=M\otimes_{S}S_R\qquad\textrm{and}\]
    \[G(f\otimes a)(m\otimes b)=fm\otimes ab\]
    for all $M,N\in\rMod{S}$, $f\in\Hom_S(M,N)$, $a,b\in R$,
    and the following diagram commutes
    \[
    \xymatrixcolsep{5pc}\xymatrix{
    \rMod{S} \ar[r]^-{\scalarExt{R}{K}} \ar@{=}[d]& (\rMod{S})_R \ar[d]^G \\
    \rMod{S} \ar[r]^-{\underline{~{}~}\,\otimes_S S_R} & \rMod{(S_R)}
    }
    \]
    In general, $G$ is neither full nor faithful. However, using standard tensor-$\Hom$
    relations, it is easy to verify that
    the map
    \begin{equation}\label{EQ:scalar-extension-in-module-categories}
    G:\Hom_{(\rMod{S})_R}(M_R,M'_R)\to \Hom_{\rMod{(S_R)}}(GM_R,GM'_R)
    \end{equation}
    is bijective if
    either (a) $M$ is finitely generated projective or (b)
    $R$ is a flat $K$-module and $M$ is finitely presented.
    In particular, if $\catC$ is an additive subcategory of $\rMod{S}$ consisting of finitely presented modules
    and $R$ is flat as a $K$-module, then $\catC_R$ can be understood as a full subcategory
    of $\rMod{(S_R)}$ in the obvious way.
    An example in which
    the map $G$ of \eqref{EQ:scalar-extension-in-module-categories} is neither injective nor
    surjective can be obtained by taking
    $S=K=\Z$, $R=\Q$ and $M=M'=\Z[\frac{1}{p}]/\Z$.
\end{remark}

If $(\catC,*,\omega)$ is a $K$-linear hermitian category and $R/K$ is a \emph{commutative} ring extension,
then $\catC_R$ also has a hermitian structure
given by $(M_R)^*=(M^*)_R$,
$(f \otimes a)^*=f ^* \otimes a$ and $\omega_{M_R}=(\omega_M)_R=\omega_M\otimes 1$
for all $M, N \in \catC$, $f \in \Hom_{\catC}(M, N)$ and
$a \in R$. In this case, the functor $\scalarExt{R}{K}$ is a $1$-hermitian duality preserving functor (the
natural transformation $i:\scalarExt{R}{K}*\to *\scalarExt{R}{K}$ is just the identity).
In particular, we get a functor
$\SesqF{\scalarExt{R}{K}}:\Sesq{\catC}\to\Sesq{\catC_R}$
given by $\SesqF{\scalarExt{R}{K}}(M,s):=(M_R,s_R)$ and
$\SesqF{\scalarExt{R}{K}}$ sends $\epsilon$-hermitian (hyperbolic) forms
to $\epsilon$-hermitian (hyperbolic) forms.

\subsection{Scalar Extension Commutes with  Transfer}
\label{subsection:scalar-extension-commutes-with-transfer}

Let $R/K$ be a commutative ring extension, let $\catC$ be a reflexive $K$-linear hermitian
category and let $M$ be an object of $\catC$ admitting a unimodular $\epsilon$-hermitian form $h$.
Then $(M_R,h_R)$ is a unimodular $\epsilon$-hermitian  form over $\catC_R$.
Let $E=\End_{\catC}(M)$ and $E_R=\End_{\catC_R}(M_R)=E\otimes_K R$.
It is easy to verify that the following diagram (of functors) commutes
\[
\xymatrixcolsep{5pc}\xymatrix{
\catC\vert_M \ar[r]^{\Transfer{(M,h)}} \ar[d]^{\scalarExt{R}{K}} & \rproj{E} \ar[d]^{\underline{~{}~}\,\otimes_E E_R} \\
\catC_R\vert_{M_R} \ar[r]^{\Transfer{(M_R,h_R)}}  & \rproj{E_R}
}\ .
\]
(Note that by Remark~\ref{RM:scalar-extension-in-module-categories}, $\rproj{E_R}$  and $\underline{~{}~}\,\otimes_E E_R$
can be understood as $\rproj{E}_R$ and $\scalarExt{R}{K}$, respectively.)
Since all the functors are $\epsilon$- or $1$-hermitian, we get the following commutative
diagram, in which the horizontal arrows are full and faithful,
\[
\xymatrixcolsep{5pc}\xymatrix{
\Herm[\lambda]{\catC\vert_M} \ar[r]^{\SesqF{\Transfer{(M,h)}}} \ar[d]^{\SesqF{\scalarExt{R}{K}}} &
\Herm[\lambda\epsilon]{\rproj{E}} \ar[d]^{\underline{~{}~}\,\otimes_E E_R} \\
\Herm[\lambda]{\catC_R\vert_{M_R}} \ar[r]^{\SesqF{\Transfer{(M_R,h_R)}}}  & \Herm[\lambda\epsilon]{\rproj{E_R}}
}\ .
\]
This diagram means that in order to study the behavior of $\scalarExt{R}{K}$ on arbitrary
$K$-linear hermitian categories, it is enough to study its behavior on hermitian categories obtained from $K$-algebras
with $K$-involution (as in Example \ref{EX:hermitian-category-obtained-from-a-ring-with-involution}).

\section{An Equivalence of Categories}
\label{section:equivalence-of-categories}

Let $\catC$ be a (not-necessarily reflexive) hermitian category.
In this section we prove that there exists
a \emph{reflexive} hermitian category
$\catC'$ such that the category $\Sesq{\catC}$ is equivalent to $\Herm[1]{\catC'}$.
(We explain how to extend this result to \emph{systems of sesquilinear forms}  in the next section.)

The category $\catC'$ resembles the category of
double arrows presented in \cite[\S3]{BayerMoldovan12}, but is not
identical to it. This difference makes our construction work for non-reflexive hermitian categories
and, as we shall explain in the next section, for systems of sesquilinear forms, where
the forms can be
defined with respect to  different hermitian structures on $\catC$.

\subsection{The Category of Twisted Double Arrows}

Let $(\catC,*,\omega)$ be a hermitian category. We construct
the \emph{category of twisted double arrows in $\catC$}, denoted $\TDA{\catC}$, as follows:
The objects of $\TDA{\catC}$ are quadruples $(M,N,f,g)$ such that $f,g\in\Hom_{\catC}(M,N^*)$.
A morphism from $(M,N,f,g)$ to $(M',N',f',g')$ is a  pair
$(\phi,\psi^\op)$ such that $\phi\in\Hom(M,M')$, $\psi\in\Hom(N',N)$, $f'\phi=\psi^*f$ and $g'\phi=\psi^*g$.
The composition of two morphisms is given by $(\phi,\psi^\op)(\phi',\psi'^\op)=(\phi\phi',(\psi'\psi)^\op)$.

The category $\TDA{\catC}$ is easily seen to be an additive category. Moreover, it
has a hermitian structure: For every $(M,N,f,g)\in\TDA{\catC}$, define $(M,N,f,g)^*=(N,M,g^*\omega_N,f^*\omega_N)$
and $\omega_{(M,N,f,g)}=\id_{(M,N,f,g)}=(\id_M,\id_N^\op)$. In addition, for every
morphism $(\phi,\psi^\op):(M,N,f,g)\to(M',N',f',g')$, let
$(\phi,\psi^\op)^*=(\psi,\phi^\op)$.
It is now routine to check
that $(\TDA{\catC},*,\omega)$ is a \emph{reflexive} hermitian category.
Also observe that $**$ is  just the identity functor on $\TDA{\catC}$.
The following proposition describes the hermitian forms
over $\TDA{\catC}$.

\begin{prp}\label{PR:sesquilinear-forms-in-the-category-of-double-arrows}
    Let $Z:=(M,N,f,g)\in\TDA{\catC}$ and let $\alpha,\beta\in \Hom_{\catC}(M,N)$. Then
    $(Z,(\alpha,\beta^\op))$ is a  hermitian form over $\TDA{\catC}$ $\iff$ $\alpha=\beta$
    and $\alpha^* f=g^*\omega_N\alpha$ $\iff$ $\alpha=\beta$
    and $\alpha^* g=f^*\omega_N\alpha$.
\end{prp}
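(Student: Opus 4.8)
The plan is to unwind the three layers of definitions --- morphism of $\TDA{\catC}$, sesquilinear form, and hermitian form --- until the assertion reduces to a single equivalence of two identities, and then to prove that equivalence by dualizing and applying naturality of $\omega$ together with the axiom $\omega_C^*\omega_{C^*}=\id_{C^*}$.

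First I would make explicit what it means for $s:=(\alpha,\beta^\op)$ to be a morphism $Z\to Z^*$. Since $Z^*=(N,M,g^*\omega_N,f^*\omega_N)$, the two defining conditions of a morphism in $\TDA{\catC}$ become
\[
g^*\omega_N\alpha=\beta^* f\qquad\text{and}\qquad f^*\omega_N\alpha=\beta^* g\,.
\]
Next, using $\omega_Z=\id_Z$ and the duality rule $(\phi,\psi^\op)^*=(\psi,\phi^\op)$, one computes $s^*\omega_Z=(\beta,\alpha^\op)$, so the hermitian condition $s=s^*\omega_Z$ is equivalent to $\alpha=\beta$. Substituting $\beta=\alpha$ into the two morphism conditions produces exactly the two identities appearing in the statement, namely $\alpha^* f=g^*\omega_N\alpha$ and $\alpha^* g=f^*\omega_N\alpha$. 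Thus $(Z,(\alpha,\beta^\op))$ is a hermitian form if and only if $\alpha=\beta$ and both identities hold.

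It remains to show that, once $\alpha=\beta$, each of the two identities implies the other; each is then equivalent to the conjunction, which is precisely the three-way equivalence in the proposition. This is the crux. I would dualize the first identity: applying $*$ to $\alpha^* f=g^*\omega_N\alpha$ gives $f^*\alpha^{**}=\alpha^*\omega_N^* g^{**}$. Precomposing with $\omega_M$ and invoking naturality of $\omega$ in the forms $\alpha^{**}\omega_M=\omega_N\alpha$ and $g^{**}\omega_M=\omega_{N^*}g$, and then the axiom $\omega_N^*\omega_{N^*}=\id_{N^*}$, collapses the right-hand side to $\alpha^* g$ and the left-hand side to $f^*\omega_N\alpha$, which is the second identity. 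The reverse implication follows by the symmetry that interchanges $f$ and $g$.

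The entire argument is formal diagram-chasing; I expect no real obstacle beyond careful bookkeeping of domains and codomains, and in particular ensuring that naturality of $\omega$ is applied to the correct morphisms ($\alpha$ and $g$) and that the hermitian-category axiom is invoked in precisely the right spot.
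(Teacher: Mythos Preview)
Your proposal is correct and follows essentially the same approach as the paper: you unwind the morphism and hermitian conditions exactly as the paper does, and your derivation of the equivalence of the two identities (dualize, precompose with $\omega_M$, apply naturality of $\omega$ and the axiom $\omega_N^*\omega_{N^*}=\id_{N^*}$) is precisely the computation the paper carries out, just organized slightly differently.
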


\begin{proof}
    By definition, $Z^*=(N,M,g^*\omega_N,f^*\omega_N)$, so $(\alpha,\beta^\op)$ is morphism
    from $Z$ to $Z^*$ if and only if $\beta^* f=g^*\omega_N\alpha$ and
    $\beta^* g=f^*\omega_N\alpha$.
    In addition, by computation, we see that $(\alpha,\beta^\op)=(\alpha,\beta^\op)^*\circ\omega_Z$
    precisely when $\alpha=\beta$. Therefore, $(Z,(\alpha,\beta^\op))$ is a hermitian
    form if and only if $\alpha=\beta$, $\alpha^* f=g^*\omega_N\alpha$ and
    $\alpha^* g=f^*\omega_N\alpha$. It is therefore enough to show
    $\alpha^* f=g^*\omega_N\alpha$ $\iff$
    $\alpha^* g=f^*\omega_N\alpha$. Indeed,
    if $\alpha^* f=g^*\omega_N\alpha$, then
    $\alpha^*\omega_N^*g^{**}=f^*\alpha^{**}$. Therefore,
    $\alpha^*g=\alpha^*\omega_N^*\omega_{N^*}g=\alpha^*\omega_N^*g^{**}\omega_M=f^*\alpha^{**}\omega_M=
    f^*\omega_N\alpha$, as required (we used the naturality of $\omega$ and the identity
    $\omega_N^*\omega_{N^*}=\id_{N^*}$ in the computation). The other direction follows by symmetry.
\end{proof}

\begin{thm}\label{TH:equivalence-of-categories}
    Let $\catC$ be a hermitian category.
    Define a functor $F:\Sesq{\catC}\to\Herm{\TDA{\catC}}$ by
    \[
    F(M,s)=((M,M, s^*\omega_M,s),(\id_M,\id_M^\op)),
    \]
    \[
    F(\psi)=(\psi,(\psi^{-1})^\op)
    \]
    for all $(M,s)\in\Sesq{\catC}$ and any morphism $\psi$ in $\Sesq{\catC}$.
    Then $F$ induces an equivalence of categories between $\Sesq{\catC}$ and $\Herm{\TDA{\catC}}$.
\end{thm}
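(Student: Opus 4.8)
The plan is to verify that $F$ is a well-defined functor and then show it is faithful, full, and essentially surjective; these three properties give an equivalence of categories.

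First I would check that $F$ lands in $\Herm{\TDA{\catC}}$. For an object, Proposition~\ref{PR:sesquilinear-forms-in-the-category-of-double-arrows}, applied with $\alpha=\beta=\id_M$, $f=s^*\omega_M$, $g=s$, shows that $((M,M,s^*\omega_M,s),(\id_M,\id_M^\op))$ is a hermitian form, since the required identity $\id_M^*(s^*\omega_M)=s^*\omega_M\,\id_M$ holds trivially. A short duality computation using naturality of $\omega$ and $\omega_M^*\omega_{M^*}=\id_{M^*}$ shows moreover that this object is self-dual (i.e.\ $Z^*=Z$) and that the form is literally the identity morphism $\id_Z$, hence an isomorphism, so the form is unimodular. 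For a morphism, I would check that $F(\psi)=(\psi,(\psi^{-1})^\op)$ satisfies the two defining relations of a morphism in $\TDA{\catC}$, both of which reduce to the isometry relation $s=\psi^*s'\psi$ after applying $*$ and naturality of $\omega$; the isometry relation $h=F(\psi)^*\,h'\,F(\psi)$ then follows from a direct composition. Functoriality ($F(\id)=\id$ and $F(\psi_2\psi_1)=F(\psi_2)F(\psi_1)$) is immediate from the rule $(\phi,\psi^\op)(\phi',\psi'^\op)=(\phi\phi',(\psi'\psi)^\op)$.

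Faithfulness is immediate, since $\psi$ is recovered as the first component of $F(\psi)$. For fullness and essential surjectivity the key fact I would isolate is that a morphism $(\phi,\psi^\op)$ in $\TDA{\catC}$ is an isomorphism if and only if both $\phi$ and $\psi$ are isomorphisms in $\catC$ (the candidate inverse being $(\phi^{-1},(\psi^{-1})^\op)$, which one checks is a legitimate morphism using the relations defining $(\phi,\psi^\op)$). For fullness, given an isometry $u=(\phi,\rho^\op):F(M,s)\to F(M',s')$, this criterion makes $\phi,\rho$ invertible, and expanding the isometry relation $h=u^*h'u$ via the composition rule yields $\rho\phi=\id_M$, forcing $\rho=\phi^{-1}$, so $u=(\phi,(\phi^{-1})^\op)$. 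The morphism relation $s'\phi=(\phi^{-1})^*s$ then rearranges to $s=\phi^*s'\phi$, so $\phi$ is an isometry of sesquilinear forms with $F(\phi)=u$.

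For essential surjectivity, consider an arbitrary unimodular hermitian form $(Z,h)$ with $Z=(M,N,f,g)$ and, by Proposition~\ref{PR:sesquilinear-forms-in-the-category-of-double-arrows}, $h=(\alpha,\alpha^\op)$ satisfying $\alpha^*f=g^*\omega_N\alpha$. The isomorphism criterion applied to $h:Z\to Z^*$ forces $\alpha:M\to N$ to be an isomorphism; this is exactly where unimodularity enters and is the crux of the argument. I would then set $s_0:=\alpha^*g\in\Hom_{\catC}(M,M^*)$ and verify that $u:=(\id_M,(\alpha^{-1})^\op)$ is an isometry $F(M,s_0)\to(Z,h)$: the two morphism relations collapse, via $s_0^*\omega_M=g^*\omega_N\alpha=\alpha^*f$ (using naturality $\alpha^{**}\omega_M=\omega_N\alpha$ and the hermitian identity), to the definition of $s_0$, and the isometry relation $u^*hu=(\id_M,\id_M^\op)$ follows from a short composition. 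Since $\id_M$ and $\alpha^{-1}$ are isomorphisms, $u$ is an isomorphism, so $(Z,h)\cong F(M,s_0)$. The main obstacle throughout is not any single deep step but the disciplined bookkeeping of the contravariant functor $*$ together with the naturality square $\alpha^{**}\omega_M=\omega_N\alpha$ and the identity $\omega_N^*\omega_{N^*}=\id$; the one genuinely conceptual point is recognizing that unimodularity of $(Z,h)$ is equivalent to invertibility of the single arrow $\alpha$, which simultaneously identifies $N$ with $M$ and pins down the preimage $s_0=\alpha^*g$.
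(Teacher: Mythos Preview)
Your proof is correct. The paper takes a slightly different but closely related route: rather than verifying that $F$ is faithful, full, and essentially surjective, it constructs an explicit inverse functor $G:\Herm{\TDA{\catC}}\to\Sesq{\catC}$ by $G((M,N,f,g),(\alpha,\alpha^\op))=(M,\alpha^*g)$ and $G(\phi,\psi^\op)=\phi$, checks $GF=\id$, and exhibits the natural isomorphism $t_{(Z,(\alpha,\alpha^\op))}=(\id_M,\alpha^\op)$ from $\id$ to $FG$. Your preimage $s_0=\alpha^*g$ is exactly $G(Z,h)$, and your isometry $u=(\id_M,(\alpha^{-1})^\op)$ is precisely $t^{-1}$, so the computational content is identical. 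The main practical difference is that the paper's organization hands you $G$ as a named functor, which it then reuses (e.g.\ in computing $G(Z\oplus Z^*,\Hyp{Z})$ for the description of hyperbolic sesquilinear forms); your approach is a bit more economical if all one wants is the equivalence statement itself. One minor remark: the paper does not explicitly observe, as you do, that for $Z=(M,M,s^*\omega_M,s)$ one has $Z^*=Z$ on the nose, which makes unimodularity of $(\id_M,\id_M^\op)$ immediate; that is a nice shortcut.
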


\begin{proof}
    Let $(M,s)\in\Sesq{\catC}$. That
    $F(M,s)$ does lie in $\Herm{\TDA{\catC}}$
    follows from Proposition~\ref{PR:sesquilinear-forms-in-the-category-of-double-arrows}.
    Let $\psi:(M,s)\to (M',s')$ be an isometry. Then
    \[
    F(\psi)^*(\id_{M'},\id_{M'}^\op)F(\psi)=
    (\psi,(\psi^{-1})^\op)^*(\id_{M'},\id_{M'}^\op)(\psi,(\psi^{-1})^\op)=
    \]
    \[
    (\psi^{-1},\psi^\op)(\id_{M'},\id_{M'}^\op)(\psi,(\psi^{-1})^\op)
    =(\psi^{-1}\id_{M'}\psi,(\psi^{-1}\id_{M'}\psi)^\op) =
    (\id_M,\id_M^\op)\ .
    \]
    Thus, $F(\psi)$ is an isometry from $F(M,s)$ to $F(M',s')$. It is clear that
    $F$ respects composition, so we conclude that $F$ is a functor.

    To see that $F$ induces an equivalence, we construct a
    functor $G$ such that $F$ and $G$ are mutual inverses.
    Let $G:\Herm{\TDA{\catC}}\to\Sesq{\catC}$ be defined by
    \[
    G((M,N,f,g),(\alpha,\alpha^\op))=(M,\alpha^* g)
    \]
    \[
    G(\phi,\psi^\op)=\phi
    \]
    for all $((M,N,f,g),(\alpha,\alpha^\op))\in\Herm{\TDA{\catC}}$ and any morphism
    $(\phi,\psi^\op)$ in\linebreak $\Herm{\TDA{\catC}}$.

    Let $(Z,(\alpha,\alpha^\op)),(Z',(\alpha',\alpha'^\op))\in\Herm{\TDA{\catC}}$ and let
    $(\phi,\psi^\op):(Z,(\alpha,\alpha^\op))\to(Z',(\alpha',\alpha'^\op))$. It is easy to see
    that $G(Z,(\alpha,\alpha^\op))\in\Sesq{\catC}$, so we turn to check that $G(\phi,\psi^\op)$
    is an isometry from $G(Z,(\alpha,\alpha^\op))$ to $G(Z',(\alpha',\alpha'^\op))$.
    Writing $Z=(M,N,f,g)$ and $Z'=(M',N',f',g')$, this amounts
    to showing $\alpha^* g=\phi^*\alpha'^*g'\phi$.
    Indeed, since $(\phi,\psi^\op)$ is morphism from $Z$ to $Z'$, we have $g'\phi=\psi^*g$, and since
    $(\phi,\psi^\op)$ is an isometry, we also have $(\phi,\psi^\op)^*(\alpha',\alpha'^\op)(\phi,\psi^\op)=(\alpha,\alpha^\op)$,
    which in turn implies $\psi\alpha'\phi=\alpha$. We now have
    $\phi^*\alpha'^*g'\phi=\phi^*\alpha'^*\psi^*g=(\psi\alpha'\phi)^*g=\alpha^* g$, as required.
    That $G$ preserves composition is straightforward.

    It is easy to see that $GF$ is the identity functor on $\Sesq{\catC}$, so it is left to show
    that there is a natural isomorphism from $FG$ to $\id_{\Herm{\TDA{\catC}}}$.
    Keeping
    the notation of the previous paragraph, we have
    \[
    FG((M,N,f,g),(\alpha,\alpha^\op))=((M,M,(\alpha^*g)^*\omega_M,\alpha^*g),(\id_M,\id_M^\op))\ .
    \]
    By Proposition~\ref{PR:sesquilinear-forms-in-the-category-of-double-arrows}, we have
    $\alpha^* f=g^*\omega_N\alpha$, hence $(\alpha^*g)^*\omega_M=g^*\alpha^{**}\omega_M=g^*\omega_N\alpha=
    \alpha^*f$. Thus,
    \begin{equation}\label{EQ:computation-of-FG}
    FG((M,N,f,g),(\alpha,\alpha^\op))=((M,M,\alpha^*f,\alpha^*g),(\id_M,\id_M^\op))\ .
    \end{equation}
    Define a natural isomorphism $t:\id_{\Herm{\TDA{\catC}}}\to FG$ by
    $t_{(Z,(\alpha,\alpha^\op))}=(\id_M,\alpha^\op)$. Using \eqref{EQ:computation-of-FG}, it
    is easy to see that $t_{(Z,(\alpha,\alpha^\op))}$ is indeed an isometry from $(Z,(\alpha,\alpha^\op))$
    to $FG(Z,(\alpha,\alpha^\op))$. The map  $t$ is natural since for $Z',(\phi,\psi^\op)$ as above,
    we have $FG(\phi,\psi^\op)t_{(Z,(\alpha,\alpha^\op))}=(\phi,(\phi^{-1})^\op)(\id_M,\alpha^\op)=
    (\phi,(\alpha\phi^{-1})^\op)=(\phi,(\psi\alpha')^\op)=(\id_{M'},\alpha'^\op)(\phi,\psi^\op)=t_{(Z',(\alpha',\alpha'^\op))}(\phi,\psi^\op)$
    (we used the identity $\psi\alpha'\phi=\alpha$ verified above).
\end{proof}

\begin{remark}
    On the model of \cite[\S3]{BayerMoldovan12}, one can also  construct the \emph{category of
    (non-twisted) double arrows in $\catC$}, denoted $\DA{\catC}$.
    Its objects are quadruples $(M,N,f,g)$ with
    $M, N \in \catC$ and $f,g \in {\rm Hom}(M,N)$. A morphism from $(M,N,f,g)$ to $(M',N',f',g')$ is a pair $(\phi, \psi)$, where
    $\phi \in {\rm Hom}(M, M')$ and $\psi \in {\rm Hom}(N,N')$ satisfy $\psi f=f' \phi$ and $\psi g=g' \phi$.
    The category $\DA{\catC}$ is obviously additive and moreover, it
    admits a hermitian structure given by  $(M,N,f,g)^*=(N^*, M^*, g^*, f^*)$, $(\phi,\psi)^*=(\psi^*,\phi^*)$
    and $\omega_{(M,N,f,g)}=(\omega_M, \omega_N)$.

    There is a functor $T:\TDA{\catC}\to\DA{\catC}$ given by $T(M,N,f,g)= (M,N^*,f,g)$ and $T(\phi,\psi^\op)=(\phi,\psi^*)$.
    This functor induces an equivalence if $\catC$ is reflexive, but otherwise it need neither be faithful nor full.
    In addition, provided $\catC$ is reflexive, one can define a functor $F':\Sesq{\catC}\to\Herm{\DA{\catC}}$ by
    $F'(M,s)=((M,M^*,s^*\omega_M,s),(\omega_M,\id_{M^*}))$ and $F'(\psi)=(\psi,(\psi^{-1})^*)$. This  functor induces
    an equivalence of categories; the proof is analogous to \cite[Th.\ 4.1]{BayerMoldovan12}.
\end{remark}

\subsection{Hyperbolic Sesquilinear Forms}

Let $\catC$ be a hermitian category. The equivalence $\Sesq{\catC}\sim \Herm{\TDA{\catC}}$ of
Theorem~\ref{TH:equivalence-of-categories} allows us to pull back notions defined for unimodular hermitian forms
over $\TDA{\catC}$
to sesquilinear form over $\catC$. In this subsection, we will do this for hyperbolicity and thus obtain a notion
of a Witt group of sesquilinear forms.

Throughout, $F$ denotes the functor $\Sesq{\catC}\to \Herm{\TDA{\catC}}$ defined in Theorem~\ref{TH:equivalence-of-categories}.

\begin{dfn}\label{DF:hyperbolic-forms}
    A sesquilinear form $(M,s)$ over $\catC$ is called \emph{hyperbolic} if $F(M,s)$
    is hyperbolic as unimodular hermitian form over $\TDA{\catC}$.
\end{dfn}

The following proposition gives a more concrete meaning to hyperbolicity of sesquilinear forms over $\catC$.

\begin{prp}\label{PR:characterization-of-hyperbolic-forms-in-herm-categories}
    Up to isometry,
    the hyperbolic sesquilinear forms over $\catC$ are given by
    \[
    (M\oplus N,\smallSMatII{0}{f}{g}{0})
    \]
    where $M,N\in\catC$, $f\in\Hom_{\catC}(N,M^*)$, $g\in\Hom_{\catC}(M,N^*)$ and $\smallSMatII{0}{f}{g}{0}$
    is an element of $\Hom_{\catC}(M\oplus N,M^*\oplus N^*)$
    given in matrix form. Furthermore, a unimodular $\epsilon$-hermitian form is hyperbolic
    as a sesquilinear form
    (i.e.\ in the sense of Definition~\ref{DF:hyperbolic-forms}) if and only if it is hyperbolic as a unimodular
    $\epsilon$-hermitian form (see section \ref{section:hermitian-categories}).
\end{prp}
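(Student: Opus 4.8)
The plan is to exploit the equivalence of Theorem~\ref{TH:equivalence-of-categories} directly. By Definition~\ref{DF:hyperbolic-forms}, a sesquilinear form is hyperbolic precisely when $F(M,s)$ is hyperbolic over $\TDA{\catC}$, and since $G$ is an equivalence inverse to $F$, the hyperbolic forms over $\catC$ are, up to isometry, exactly the forms $G(Q\oplus Q^*,\Hyp{Q})$ as $Q$ ranges over $\TDA{\catC}$ (every object of the reflexive category $\TDA{\catC}$ is eligible). So the first task is to compute $G(Q\oplus Q^*,\Hyp{Q})$ explicitly and then check that the resulting forms are exactly the anti-diagonal ones $\smallSMatII{0}{f}{g}{0}$.

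Write $Q=(P,W,p,q)$, so that $Q^*=(W,P,q^*\omega_W,p^*\omega_W)$ and $Q\oplus Q^*=(P\oplus W,\,W\oplus P,\,p\oplus q^*\omega_W,\,q\oplus p^*\omega_W)$. Since in $\TDA{\catC}$ we have $\omega_Q=\id_Q$ and $Q^{**}=Q$, the form $\Hyp{Q}=\smallSMatII{0}{\id_{Q^*}}{\omega_Q}{0}$ is the \emph{swap} morphism $Q\oplus Q^*\to Q^*\oplus Q$. Reading off its first (forward) component and using Proposition~\ref{PR:sesquilinear-forms-in-the-category-of-double-arrows}, this hermitian form corresponds to $(\alpha,\alpha^\op)$ with $\alpha=\smallSMatII{0}{\id_W}{\id_P}{0}:P\oplus W\to W\oplus P$ the swap. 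Then $G(Q\oplus Q^*,\Hyp{Q})=(P\oplus W,\alpha^*(q\oplus p^*\omega_W))$, and a short block-matrix computation gives $\alpha^*(q\oplus p^*\omega_W)=\smallSMatII{0}{p^*\omega_W}{q}{0}$. Thus every hyperbolic form is isometric to $(M\oplus N,\smallSMatII{0}{f}{g}{0})$ with $M=P$, $N=W$, $g=q$ and $f=p^*\omega_W$.

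It remains to see that $f$ and $g$ can be arbitrary, i.e.\ that the assignment $p\mapsto p^*\omega_W$ is onto $\Hom_{\catC}(N,M^*)$. This is where non-reflexivity of $\catC$ must be handled: given any $f\in\Hom_{\catC}(N,M^*)$ I would set $p:=f^*\omega_M$, and then by naturality of $\omega$ and the defining identity $\omega_M^*\omega_{M^*}=\id_{M^*}$ one computes $p^*\omega_N=\omega_M^*f^{**}\omega_N=\omega_M^*\omega_{M^*}f=f$. Hence taking $Q=(M,N,f^*\omega_M,g)$ realizes $(M\oplus N,\smallSMatII{0}{f}{g}{0})$ as a hyperbolic form for every $f,g$, completing the characterization.

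For the second statement, one direction is immediate: $\Hyp[\epsilon]{Q}=\smallSMatII{0}{\id_{Q^*}}{\epsilon\omega_Q}{0}$ already has anti-diagonal shape (with $M=Q$, $N=Q^*$, $f=\id_{Q^*}$, $g=\epsilon\omega_Q$), so an $\epsilon$-hermitian-hyperbolic form is hyperbolic as a sesquilinear form. For the converse I would first note that being unimodular $\epsilon$-hermitian is preserved by sesquilinear isometries (a one-line check using naturality of $\omega$), so one may assume the form equals $(M\oplus N,h)$ with $h=\smallSMatII{0}{f}{g}{0}$. Imposing $h=\epsilon h^*\omega_{M\oplus N}$ forces $g=\epsilon f^*\omega_M$, and unimodularity forces $f,g$ to be isomorphisms; then $\omega_M=\epsilon(f^*)^{-1}g$ is an isomorphism, so $M$ is reflexive. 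Finally $\psi=\smallSMatII{\id_M}{0}{0}{f}:M\oplus N\to M\oplus M^*$ is an isometry onto $(M\oplus M^*,\Hyp[\epsilon]{M})$, exhibiting the form as $\epsilon$-hermitian-hyperbolic. I expect this converse to be the main obstacle, the crux being the deduction that the summands are reflexive (so that $\Hyp[\epsilon]{M}$ is a legitimate hyperbolic $\epsilon$-hermitian form) even though $\catC$ itself need not be reflexive.
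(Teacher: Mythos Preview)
Your proof is correct and follows essentially the same approach as the paper's: both compute $G(Q\oplus Q^*,\Hyp{Q})$ explicitly via the block-matrix formula $\smallSMatII{0}{p^*\omega_W}{q}{0}$, invoke the identity $\omega_M^*\omega_{M^*}=\id_{M^*}$ together with naturality of $\omega$ to show that $p\mapsto p^*\omega_W$ hits every $f$, and for the second statement deduce reflexivity of the summands from the $\epsilon$-hermitian condition plus unimodularity before exhibiting the isometry $\id_M\oplus f$ onto $(M\oplus M^*,\Hyp[\epsilon]{M})$. The only cosmetic difference is that you make explicit the (trivial) observation that unimodular $\epsilon$-hermitian is preserved under sesquilinear isometry, which the paper leaves implicit.
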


\begin{proof}
    Let $G$ be the functor $\Herm{\TDA{\catC}}\to\Sesq{\catC}$ defined in the proof
    of Theorem~\ref{TH:equivalence-of-categories}. Since $F$ and $G$ are mutual inverses, the hyperbolic
    sesquilinear forms over $\catC$ are the forms isometric to $G(Z\oplus Z^*,\Hyp{Z})$ for $Z\in\TDA{\catC}$.
    Write $Z=(M,N,h,g)$. Then
    \[
    (Z\oplus Z^*,\Hyp{Z})=\left(
    (M\oplus N,N\oplus M,\smallSMatII{h}{0}{0}{g^*\omega_N},\smallSMatII{g}{0}{0}{h^*\omega_N}),\smallSMatII{0}{\id_{Z^*}}{\omega_Z}{0}\right).
    \]
    Observe that $\smallSMatII{0}{\id_{Z^*}}{\omega_Z}{0}=\smallSMatII{0}{(\id_N,\id_M^\op)}{(\id_M,\id_N^\op)}{0}=
    (\smallSMatII{0}{\id_N}{\id_M}{0},\smallSMatII{0}{\id_N}{\id_M}{0}^\op)$.
    Thus,
    \[
    G(Z\oplus Z^*,\Hyp{Z})=(M\oplus N,\smallSMatII{0}{\id_N}{\id_M}{0}^*\smallSMatII{g}{0}{0}{h^*\omega_N}),
    \]
    and since $\smallSMatII{0}{\id_N}{\id_M}{0}^*\smallSMatII{g}{0}{0}{h^*\omega_N}
    =\smallSMatII{0}{\id_{M^*}}{\id_{N^*}}{0}\smallSMatII{g}{0}{0}{h^*\omega_N}=
    \smallSMatII{0}{h^*\omega_N}{g}{0}$, we see that $G(Z\oplus Z^*,\Hyp{Z})$ matches  the description
    in the proposition. Furthermore, by putting $h=f^*\omega_M$ for $f\in\Hom_{\catC}(N,M^*)$, we
    get $h^*\omega_N=\omega_M^* f^{**}\omega_N=\omega_M^*\omega_{M^*}f=f$.
    Thus,
    $(M\oplus N,\smallSMatII{0}{f}{g}{0})$ is hyperbolic for all $M,N,f,g$, as required.

    To finish, note that we have clearly shown that $(Q\oplus Q^*,\Hyp[\epsilon]{Q})$ is hyperbolic as a sesquilinear
    form for every $Q\in\catC$.
    To see the converse, assume $(M\oplus N,\smallSMatII{0}{f}{g}{0})$ is $\epsilon$-hermitian
    and unimodular.
    Then
    \[
    \smallSMatII{0}{f}{g}{0}=\epsilon \smallSMatII{0}{f}{g}{0}^*\omega_{M\oplus N}=
    \epsilon\smallSMatII{0}{g^*}{f^*}{0} \smallSMatII{\omega_M}{0}{0}{\omega_N}=\smallSMatII{0}{\epsilon g^*\omega_N}{\epsilon f^*\omega_M}{0},
    \]
    hence $g=\epsilon f^*\omega_N$ and $f=\epsilon g^*\omega_M$. Since $\smallSMatII{0}{f}{g}{0}$ is unimodular,
    $f$ and $g$ are bijective and hence, so are $\omega_N$ and $\omega_M$. In particular, $M$ is reflexive. It is now routine
    to verify that the map $\id_M\oplus f:M\oplus N\to M\oplus M^*$ is an isometry
    from $(M\oplus N,\smallSMatII{0}{f}{g}{0})$ to $(M\oplus M^*, \Hyp[\epsilon]{M})$, so
    the former is hyperbolic in the sense of section \ref{section:hermitian-categories}.
\end{proof}

Let $(A, \sigma)$ be a ring with involution.
In case $\catC$ is the category of
right $A$-modules, considered as
a hermitian category as in Example~\ref{EX:hermitian-category-obtained-from-a-ring-with-involution}, we obtain
a notion of hyperbolic sesquilinear forms over $(A,\sigma)$. These
hyperbolic forms can be characterized as follows.

\begin{prp}\label{PR:characterization-of-hyperbolic-forms-over-rings}
    A sesquilinear form $(M,s)$ over $(A,\sigma)$ is hyperbolic if and only if there
    are submodules $M_1,M_2\leq M$ such that $s(M_1,M_1)=s(M_2,M_2)=0$ and $M=M_1\oplus M_2$.
    Furthermore, if $(M,s)$ is unimodular and $\epsilon$-hermitian, then $(M,s)$ is hyperbolic as a sesquilinear
    space if and only if it is hyperbolic as an $\epsilon$-hermitian unimodular space.
\end{prp}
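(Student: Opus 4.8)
The plan is to transport the problem into the hermitian category $\catC=\rMod{A}$ and invoke the characterization already obtained in Proposition~\ref{PR:characterization-of-hyperbolic-forms-in-herm-categories}. By Example~\ref{EX:hermitian-category-obtained-from-a-ring-with-involution}, the assignment $(M,s)\mapsto(M,\rAd{s})$ is an isomorphism of categories $\Sesq{A,\sigma}\cong\Sesq{\catC}$ carrying isometries to isometries, and by Definition~\ref{DF:hyperbolic-forms} the form $(M,s)$ is hyperbolic precisely when $(M,\rAd{s})$ is hyperbolic over $\catC$. Thus the first assertion reduces to re-expressing the categorical description of hyperbolic forms in module language.

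The one computation carrying content is the translation of ``vanishing diagonal blocks'' into ``totally isotropic summands''. Given a module decomposition $M=M_1\oplus M_2$ --- which is at the same time a biproduct in $\catC$ --- the adjoint $\rAd{s}\colon M\to M^*=M_1^*\oplus M_2^*$ acquires a $2\times 2$ block form, and I would check that its $(1,1)$- and $(2,2)$-blocks vanish if and only if $s(M_1,M_1)=0$ and $s(M_2,M_2)=0$. This is immediate from $\rAd{s}(x)(y)=\sigma(s(y,x))$ together with the injectivity of $\sigma$: the $(i,i)$-block sends $x\in M_i$ to the restriction of $\rAd{s}(x)$ to $M_i$, which vanishes for all $x\in M_i$ exactly when $s(M_i,M_i)=0$.

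With this in hand, both implications follow. For the ``if'' direction, a decomposition $M=M_1\oplus M_2$ with $s(M_i,M_i)=0$ makes $(M,\rAd{s})$ a form of the shape $(M_1\oplus M_2,\smallSMatII{0}{f}{g}{0})$, which is hyperbolic over $\catC$ by Proposition~\ref{PR:characterization-of-hyperbolic-forms-in-herm-categories}; hence $(M,s)$ is hyperbolic. For the ``only if'' direction, hyperbolicity of $(M,s)$ yields a $\catC$-isometry $\phi$ from $(M,\rAd{s})$ onto some $(P\oplus Q,\smallSMatII{0}{f}{g}{0})$; since $\phi$ is merely an $A$-module isomorphism satisfying $s(x,y)=s'(\phi x,\phi y)$, where $s'$ is the module form corresponding to $\smallSMatII{0}{f}{g}{0}$, the preimages $M_1:=\phi^{-1}(P)$ and $M_2:=\phi^{-1}(Q)$ furnish a decomposition $M=M_1\oplus M_2$ with $s(M_i,M_i)=0$ by the block computation above.

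For the second assertion I would again appeal to Example~\ref{EX:hermitian-category-obtained-from-a-ring-with-involution}, which provides an isomorphism $\Herm[\epsilon]{A,\sigma}\cong\Herm[\epsilon]{\catC}$, together with the corresponding clause of Proposition~\ref{PR:characterization-of-hyperbolic-forms-in-herm-categories}, stating that a unimodular $\epsilon$-hermitian form over $\catC$ is hyperbolic as a sesquilinear form iff it is hyperbolic as a unimodular $\epsilon$-hermitian form. The only point requiring care --- and the nearest thing to an obstacle --- is verifying that this isomorphism of categories matches the module notion of an $\epsilon$-hermitian hyperbolic space $(V\oplus V^*,\Hyp[\epsilon]{V})$ with the categorical hyperbolic forms $(Q\oplus Q^*,\Hyp[\epsilon]{Q})$; this is a routine unwinding of the definitions of $*$, $\omega$ and $\Hyp[\epsilon]{}$ from Section~\ref{section:sesquilinear-forms}. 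Granting it, the equivalence of the two notions of hyperbolicity for unimodular $\epsilon$-hermitian forms over $(A,\sigma)$ is immediate.
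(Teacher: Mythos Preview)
Your proposal is correct and follows essentially the same route as the paper: both translate the question to $\catC=\rMod{A}$ via $(M,s)\mapsto(M,\rAd{s})$, check that the diagonal blocks of $\rAd{s}$ with respect to a decomposition $M=M_1\oplus M_2$ vanish precisely when $s(M_1,M_1)=s(M_2,M_2)=0$, and then invoke Proposition~\ref{PR:characterization-of-hyperbolic-forms-in-herm-categories} for both assertions. You spell out the ``only if'' direction (pulling back the summands along the isometry) and the matching of module-theoretic with categorical hyperbolic $\epsilon$-hermitian forms more explicitly than the paper does, but these are exactly the details the paper's one-line appeal to that proposition is leaving implicit.
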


\begin{proof}
    Recall that for any two right $A$-modules $M_1,M_2$, we identify $(M_1\oplus M_2)^*$ with $M_1^*\oplus M_2^*$
    via $f\leftrightarrow (f|_{M_1},f|_{M_2})$. Let $(M,s)$ be a sesquilinear space
    and assume $M=M_1\oplus M_2$. By straightforward computation, we see that
    $\rAd{s}$ is of the form $\smallSMatII{0}{f}{g}{0}\in \Hom_A(M,M^*)=\Hom_A(M_1\oplus M_2,M_1^*\oplus M_2^*)$
    if and only if $s(M_1,M_1)=s(M_2,M_2)=0$. The proposition therefore follows from Proposition~\ref{PR:characterization-of-hyperbolic-forms-in-herm-categories}.
\end{proof}

\subsection{Witt Groups of Sesquilinear Forms}

Let $\catC$ be a hermitian category. Denote by $\WGrS{\catC}$
the Grothendieck group of isometry classes of \emph{sesquilinear} forms over $\catC$, with respect to orthogonal sum.
It is easy to see that the hyperbolic isometry classes span a subgroup
of $\WGrS{\catC}$, which we denote by $\mathbb{H}(\catC)$. The
\emph{Witt group of sesquilinear forms} over $\catC$ is defined to the quotient
\[
\WittS{\catC}=\WGrS{\catC}/\mathbb{H}(\catC)\ .
\]
By definition, we have $\WittS{\catC}\cong \Witt{\TDA{\catC}}$.
Taking $\catC$ to be the category of all (resp.\ reflexive, projective) right $A$-modules of finite
type and their duals, we obtain a notion of a Witt group for sesquilinear forms over $(A, \sigma)$.
Also observe that there is a homomorphism of groups
$\Witt[\epsilon]{\catC}\to\WittS{\catC}$ given by sending the class of a unimodular $\epsilon$-hermitian form
to its corresponding class in $\WittS{\catC}$. Corollary~\ref{CR:the-map-Witt-to-WittS-is-injective} below
presents sufficient conditions for the injectivity of this homomorphism.

\subsection{Extension of Scalars}
\label{subsection:F-commutes-with-scalar-ext}

Let $R/K$ be a commutative ring extension and let $\catC$ be a $K$-linear
hermitian category. Then the category $\TDA{\catC}$
is also $K$-linear.
For later usage, we now check that the scalar extension functor $\scalarExt{R}{K}$ of
subsection \ref{subsection:ring-extension} ``commutes'' with the
functor $F$ of Theorem~\ref{TH:equivalence-of-categories}.

\begin{prp}\label{PR:F-commutes-with-scalar-extension}
    There is a $1$-hermitian duality preserving functor
    $J:\TDA{\catC}_R\to\TDA{\catC_R}$ given by
    \[
    J((M,N,f,g)_R)=(M_R,N_R,f_R,g_R),
    \]
    \[
    J((\phi,\psi^\op)\otimes a)=(\phi\otimes a,(\psi\otimes a)^\op),
    \]
    for all $(M,N,f,g)\in\TDA{\catC}$ and any morphism $(\phi,\psi^\op)$ in $\TDA{\catC}$.
    (The associated natural isomorphism $i:J*\to *J$ is the identity map.)
    The functor $J$ is faithful and full, and it makes the following
    diagram commute:
    \[
    \xymatrix{
    \Sesq{\catC} \ar[rr]^F \ar[d]^{\SesqF{\scalarExt{R}{K}}} & & \Herm{\TDA{\catC}} \ar[d]^{\SesqF{\scalarExt{R}{K}}} \\
    \Sesq{\catC_R} \ar[r]^-F & \Herm{\TDA{\catC_R}} & \Herm{\TDA{\catC}_R} \ar[l]_-{\SesqF{J}}
    }
    \]
\end{prp}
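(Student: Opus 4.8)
The plan is to check four things: that $J$ is a well-defined $K$-linear functor, that it is $1$-hermitian duality preserving with $i=\id$, that it is full and faithful, and that the square commutes; only full faithfulness is genuinely delicate. The rule of $J$ on objects is unambiguous, so the first point is really about morphisms. A morphism of $\TDA{\catC}_R$ from $(M,N,f,g)_R$ to $(M',N',f',g')_R$ is by definition an element of $\Hom_{\TDA{\catC}}((M,N,f,g),(M',N',f',g'))\otimes_K R$, and I would let $J$ act on it through the universal property of $\otimes_K$ applied to the $K$-bilinear map $((\phi,\psi^\op),a)\mapsto(\phi\otimes a,(\psi\otimes a)^\op)$. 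One first checks that $(\phi\otimes a,(\psi\otimes a)^\op)$ really is a morphism $(M_R,N_R,f_R,g_R)\to(M'_R,N'_R,f'_R,g'_R)$ in $\TDA{\catC_R}$: the relations $f'\phi=\psi^*f$ and $g'\phi=\psi^*g$ holding in $\catC$ pass under $\scalarExt{R}{K}$ and, after scaling by $a$, give $f'_R(\phi\otimes a)=(\psi\otimes a)^*f_R$ and $g'_R(\phi\otimes a)=(\psi\otimes a)^*g_R$, using $(\psi\otimes a)^*=\psi^*\otimes a$ and the composition law $(u\otimes a)(v\otimes b)=uv\otimes ab$ of $\catC_R$. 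Preservation of identities is immediate, and preservation of composition follows from the same law together with the commutativity of $R$.

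For the second point I would check that $J$ intertwines the two dualities strictly. On objects, the dual of $(M,N,f,g)_R$ in $\TDA{\catC}_R$ is $(N,M,g^*\omega_N,f^*\omega_N)_R$, so $J$ sends it to $(N_R,M_R,(g^*\omega_N)_R,(f^*\omega_N)_R)$; since $\scalarExt{R}{K}$ is itself $1$-hermitian duality preserving we have $(g^*\omega_N)_R=(g_R)^*\omega_{N_R}$ and likewise for $f$, so this is exactly the dual of $(M_R,N_R,f_R,g_R)=J((M,N,f,g)_R)$ in $\TDA{\catC_R}$. The identity $J(\theta^*)=(J\theta)^*$ on morphisms is the same computation using $(\phi,\psi^\op)^*=(\psi,\phi^\op)$. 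Taking $i=\id$, the coherence square defining a duality preserving functor commutes trivially, and because $\omega$ is the identity on every object of both $\TDA{\catC}_R$ and $\TDA{\catC_R}$, the $1$-hermitian condition $i_{Z^*}J(\omega_Z)=i_Z^*\omega_{JZ}$ reduces to $\id=\id$.

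The full-faithfulness is the main obstacle. Writing $X=(M,N,f,g)$ and $Y=(M',N',f',g')$, the morphism set $\Hom_{\TDA{\catC}}(X,Y)$ is the kernel of the $K$-linear map
\[
\Delta_{X,Y}\colon \Hom_\catC(M,M')\oplus\Hom_\catC(N',N)\longrightarrow \Hom_\catC\bigl(M,(N')^*\bigr)^{\oplus 2},\qquad (\phi,\psi)\mapsto\bigl(f'\phi-\psi^*f,\ g'\phi-\psi^*g\bigr).
\]
Hence $\Hom_{\TDA{\catC}_R}(X_R,Y_R)=(\ker\Delta_{X,Y})\otimes_K R$, while the identical description carried out inside $\catC_R$ identifies $\Hom_{\TDA{\catC_R}}(JX,JY)$ with $\ker(\Delta_{X,Y}\otimes_K R)$. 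Under these identifications the map induced by $J$ is precisely the canonical comparison map $(\ker\Delta_{X,Y})\otimes_K R\to\ker(\Delta_{X,Y}\otimes_K R)$, so full faithfulness is exactly the assertion that $-\otimes_K R$ carries these kernels to kernels. This is the crux: it holds whenever $-\otimes_K R$ is exact, i.e.\ when $R$ is flat over $K$, and I expect flatness to be the property that must be invoked here — the subtleties recorded in Remark~\ref{RM:scalar-extension-in-module-categories} already show that for a general ring extension the comparison of a morphism module with its scalar extension can fail to be bijective.

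Finally, for the commutativity of the square I would transport a form $(M,s)\in\Sesq{\catC}$ (and then a morphism) around both routes. Down-then-across gives $F\bigl(\scalarExt{R}{K}(M,s)\bigr)=F(M_R,s_R)=\bigl((M_R,M_R,s_R^*\omega_{M_R},s_R),(\id,\id^\op)\bigr)$. Across-then-down produces the form $\bigl((M,M,s^*\omega_M,s)_R,(\id,\id^\op)\bigr)$ over $\TDA{\catC}_R$, and applying $\SesqF{J}$ to it yields $\bigl((M_R,M_R,(s^*\omega_M)_R,s_R),(\id,\id^\op)\bigr)$. The two objects agree because $(s^*\omega_M)_R=s_R^*\omega_{M_R}$, and the transported forms agree because $i=\id$ and $J(\id)=\id$; the same bookkeeping on morphisms shows the square commutes on the nose.
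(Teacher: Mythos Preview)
Your argument is well organized, and for the routine parts (functoriality, the $1$-hermitian duality preserving property with $i=\id$, commutativity of the square) it matches the paper's terse ``all other assertions follow by computation.'' The only place where the two arguments diverge is full faithfulness.

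The paper proceeds by writing down an explicit inverse: it defines $I:\Hom_{\TDA{\catC_R}}(JZ_R,JZ'_R)\to\Hom_{\TDA{\catC}_R}(Z_R,Z'_R)$ by
\[
I\Bigl(\sum_i f_i\otimes a_i,\ (\sum_j g_j\otimes b_j)^\op\Bigr)=\sum_i (f_i,0^\op)\otimes a_i+\sum_j (0,g_j^\op)\otimes b_j
\]
and asserts that $I$ is inverse to $J$. Your approach instead identifies $J$ on $\Hom$-sets with the canonical comparison map $(\ker\Delta_{X,Y})\otimes_K R\to\ker(\Delta_{X,Y}\otimes_K R)$ and notes that this is an isomorphism when $R$ is flat over $K$.

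These are really two descriptions of the same situation through the ambient isomorphism
\[
\bigl(\Hom_\catC(M,M')\oplus\Hom_\catC(N',N)\bigr)\otimes_K R\;\cong\;\bigl(\Hom_\catC(M,M')\otimes_K R\bigr)\oplus\bigl(\Hom_\catC(N',N)\otimes_K R\bigr),
\]
and the paper's $I$ is nothing but this isomorphism restricted to the relevant subspaces. Your reservation about flatness is well founded and in fact exposes a gap in the paper's formula: the pairs $(f_i,0^\op)$ and $(0,g_j^\op)$ are \emph{not} morphisms of $\TDA{\catC}$ in general, so the displayed expression does not a priori lie in $\Hom_{\TDA{\catC}}(Z,Z')\otimes_K R$. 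Showing that it does, and that $J$ is injective, amounts precisely to the exactness of $-\otimes_K R$ on $0\to\ker\Delta_{X,Y}\to A\to B$, i.e.\ to the flatness hypothesis you isolate. In every application the paper makes of this proposition $K$ is a field, so the issue is invisible there; but in the stated generality your kernel-comparison formulation is the cleaner one and makes the needed hypothesis explicit.
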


\begin{proof}
    We only check that $J$ is faithful and full. All other assertions follow by computation.
    Let $Z,Z'\in\TDA{\catC}$.
    Define $I:\Hom_{\TDA{\catC_R}}(JZ_R,JZ'_R)\to\Hom_{\TDA{\catC}_R}(Z_R,Z'_R)$
    by
    \[
    I(\sum_i f_i\otimes a_i,(\sum_j g_j\otimes b_j)^\op)=\sum_{i,j}\left((f_i,0^\op)\otimes a_i+(0,g_j^\op)\otimes b_j\right)\ .
    \]
    Then it is routine to verify that $I$ is an inverse of
    $J:\Hom_{\TDA{\catC}_R}(Z_R,Z'_R)\to\Hom_{\TDA{\catC_R}}(JZ_R,JZ'_R)$. Thus, $J$ is full and faithful.
\end{proof}

As an immediate corollary, we get:

\begin{cor}\label{CR:F-preserves-isometry-after-scalar-extension}
    Let $(M,s)$, $(M',s')$ be two sesquilinear forms over $\catC$. Then
    $\SesqF{\scalarExt{R}{K}}(M,s)$ is isometric to $\SesqF{\scalarExt{R}{K}}(M',s')$ if and only if
    $\SesqF{\scalarExt{R}{K}} F(M,s)$ is isometric to $\SesqF{\scalarExt{R}{K}} F(M',s')$.
\end{cor}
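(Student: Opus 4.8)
The plan is to obtain the corollary formally from Theorem~\ref{TH:equivalence-of-categories} and Proposition~\ref{PR:F-commutes-with-scalar-extension}, by exploiting the fact that both an equivalence and a fully faithful functor preserve \emph{and reflect} isometries; throughout I write $\cong$ for ``is isometric to''. The two inputs are: (i) the functor $F:\Sesq{\catC_R}\to\Herm{\TDA{\catC_R}}$ is an equivalence (Theorem~\ref{TH:equivalence-of-categories} applied to the hermitian category $\catC_R$), so that $X\cong Y$ in $\Sesq{\catC_R}$ if and only if $FX\cong FY$ in $\Herm{\TDA{\catC_R}}$; and (ii) the functor $J$ is full and faithful by Proposition~\ref{PR:F-commutes-with-scalar-extension}, whence the induced functor $\SesqF{J}$ on hermitian forms is full and faithful by the general fact recorded when induced functors were introduced in the subsection on duality preserving functors.

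First I would apply (i) to the left-hand condition of the corollary: the forms $\SesqF{\scalarExt{R}{K}}(M,s)$ and $\SesqF{\scalarExt{R}{K}}(M',s')$ over $\catC_R$ are isometric if and only if their images
$$F\,\SesqF{\scalarExt{R}{K}}(M,s)\cong F\,\SesqF{\scalarExt{R}{K}}(M',s')$$
in $\Herm{\TDA{\catC_R}}$ are isometric. Next I would invoke commutativity of the square in Proposition~\ref{PR:F-commutes-with-scalar-extension}, which reads $F\,\SesqF{\scalarExt{R}{K}}=\SesqF{J}\,\SesqF{\scalarExt{R}{K}}\,F$, to rewrite these images as $\SesqF{J}\bigl(\SesqF{\scalarExt{R}{K}}F(M,s)\bigr)$ and $\SesqF{J}\bigl(\SesqF{\scalarExt{R}{K}}F(M',s')\bigr)$.

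Finally I would cancel $\SesqF{J}$ using (ii): since $\SesqF{J}$ is full and faithful, it reflects isometries, so the two objects just displayed are isometric if and only if
$$\SesqF{\scalarExt{R}{K}}F(M,s)\cong\SesqF{\scalarExt{R}{K}}F(M',s'),$$
which is precisely the right-hand condition of the corollary. Chaining these ``if and only if'' statements yields the claim.

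There is no serious obstacle; the statement is genuinely a formal consequence of the preceding two results. The only point deserving a word is the standard observation that a fully faithful functor reflects isomorphisms: given an isometry $g:\SesqF{J}(X)\to\SesqF{J}(Y)$, fullness yields $h\colon X\to Y$ and $k\colon Y\to X$ with $\SesqF{J}(h)=g$ and $\SesqF{J}(k)=g^{-1}$, and faithfulness then forces $kh=\id_X$ and $hk=\id_Y$, so $h$ is an isometry. This is what licenses cancelling $\SesqF{J}$ from both sides of an isometry; everything else is bookkeeping.
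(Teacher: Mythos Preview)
Your proof is correct and is exactly the argument the paper has in mind: the paper gives no proof at all, simply writing ``As an immediate corollary, we get'' after Proposition~\ref{PR:F-commutes-with-scalar-extension}, and your write-up just spells out how the commutative diagram together with the fullness and faithfulness of $J$ (hence of $\SesqF{J}$) and the equivalence $F$ over $\catC_R$ yield the biconditional.
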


\section{Systems of Sesquilinear Forms}
\label{section:systems}

In this section, we explain how to generalize the results of  Section~\ref{section:equivalence-of-categories}
to systems of sesquilinear forms.

\medskip

Let $A$ be a ring and let $\{\sigma_i\}_{i\in I}$ be an nonempty family of (not necessarily
distinct) involutions of $A$.
A system of sesquilinear forms over $(A,\{\sigma_i\}_{i\in I})$ is a pair $(M,\{s_i\}_{i\in I})$
such that $(M,s_i)$ is a sesquilinear space over $(A,\sigma_i)$ for all $i$.
An isometry between two systems of sesquilinear forms $(M,\{s_i\}_{i\in I})$, $(M',\{s'_i\}_{i\in I})$ is
an isomorphism $f:M\to M'$ such that $s'_i(fx,fy)=s_i(x,y)$ for all $x,y\in M$, $i\in I$.

Observe that each of the involutions $\sigma_i$ gives rise to a hermitian structure $(*_i,\omega_i)$
on $\rMod{A}$, the category of right $A$-modules. In particular, a system of sesquilinear forms
$(M,\{s_i\})$ gives rise to homomorphisms $\rAd{(s_i)},\lAd{(s_i)}:M\to M^{*_i}$ given
by $\rAd{(s_i)}(x)(y)=\sigma_i(s_i(y,x))$ and $\lAd{(s_i)}(x)(y)=s_i(x,y)$, where $M^{*_i}=\Hom_A(M,A)$, considered
as a right $A$-module via the action $(f\cdot a)m=\sigma_i(a)f(m)$.
This leads to the notion of systems of sesquilinear forms over hermitian categories.

\medskip

Let $\catC$ be an additive category and let $\{*_i,\omega_i\}_{i\in I}$ be a nonempty
family of hermitian structures on $\catC$. A system of sesquilinear forms over $(\catC,\{*_i,\omega_i\}_{i\in I})$
is a pair $(M,\{s_i\}_{i\in I})$ such that $M\in\catC$ and $(M,s_i)$ is a sesquilinear form
over $(\catC,*_i,\omega_i)$.
An isometry
between two systems of sesquilinear forms $(M,\{s_i\}_{i\in I})$ and $(M',\{s'_i\}_{i\in I})$ is
an isomorphism $f:M\xrightarrow{\sim} M'$ such that $f^{*_i}s'_i f=s_i$ for all $i\in I$.
We let $\SysSesq{\catC}{I}$ (or $\SysSesq{\catC,\{*_i,\omega_i\}}{I}$)
denote the category of systems of sesquilinear forms over $(\catC,\{*_i,\omega_i\}_{i\in I})$
with isometries as morphisms.

\medskip

Keeping the notation of the previous paragraph,
the results of section \ref{section:equivalence-of-categories} can be extended to
systems of sesquilinear forms as follows: Define the category of \emph{twisted
double $I$-arrows} over $(\catC,\{*_i,\omega_i\}_{i\in I})$, denoted $\TDA[I]{\catC}$, to be the category
whose objects are quadruples $(M,N,\{f_i\}_{i\in I},\{g_i\}_{i\in I})$ with $M,N\in\catC$ and $f_i,g_i\in \Hom_{\catC}(M,N^{*_i})$.
A morphism $(M,N,\{f_i\},\{g_i\})\to(M',N',\{f'_i\},\{g'_i\})$ is a formal pair
$(\phi,\psi^\op)$ such that $\phi\in\Hom(M,M')$, $\psi\in\Hom(N',N)$ and $\psi^{*_i}f_i=f'_i\phi$, $\psi^{*_i}g_i=g'_i\phi$
for all $i\in I$. The composition is defined by the formula $(\phi,\psi^\op)(\phi',\psi'^\op)=(\phi\phi',(\psi'\psi)^\op)$.

The category $\TDA[I]{\catC}$ can be made into a reflexive hermitian category  by letting
$(M,N,\{f_i\},\{g_i\})^*=(N,M,\{g_i^{*_i}\omega_{i,N}\},\{f_i^{*_i}\omega_{i,M}\})$, $(\phi,\psi^\op)^*=(\psi,\phi^\op)$
and $\omega_{(M,N,\{f_i\},\{g_i\})}=(\id_M,\id_N^\op)$.
It is now possible to prove the following theorem, whose proof is completely analogous to the proof
of Theorem~\ref{TH:equivalence-of-categories}.

\begin{thm}\label{TH:equivalence-of-categories-for-systems}
    Define a functor $F:\SysSesq{\catC}{I}\to\Herm{\TDA[I]{\catC}}$ by
    \[
    F(M,\{s_i\})=((M,M,\{s_i^{*_i}\omega_{i,M}\},\{s_i\}),(\id_M,\id_M^\op)),
    \]
    \[
    F(\psi)=(\psi,(\psi^{-1})^\op)
    \]
    Then $F$ induces an equivalence of categories.
\end{thm}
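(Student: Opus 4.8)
The plan is to mirror the proof of Theorem~\ref{TH:equivalence-of-categories} line for line, replacing each single condition by the corresponding family indexed by $i\in I$. The first step is to record the analogue of Proposition~\ref{PR:sesquilinear-forms-in-the-category-of-double-arrows}: for $Z=(M,N,\{f_i\},\{g_i\})\in\TDA[I]{\catC}$ and $\alpha,\beta\in\Hom_{\catC}(M,N)$, the pair $(Z,(\alpha,\beta^\op))$ is a hermitian form over $\TDA[I]{\catC}$ if and only if $\alpha=\beta$ and $\alpha^{*_i}f_i=g_i^{*_i}\omega_{i,N}\alpha$ for every $i\in I$ (equivalently, $\alpha=\beta$ and $\alpha^{*_i}g_i=f_i^{*_i}\omega_{i,N}\alpha$ for every $i$). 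Its proof is the same as before, carried out index by index: computing $Z^*$ from the hermitian structure on $\TDA[I]{\catC}$ shows that $(\alpha,\beta^\op)\colon Z\to Z^*$ is a morphism exactly when $\beta^{*_i}f_i=g_i^{*_i}\omega_{i,N}\alpha$ and $\beta^{*_i}g_i=f_i^{*_i}\omega_{i,N}\alpha$ hold for all $i$, while the reflexivity condition $(\alpha,\beta^\op)=(\alpha,\beta^\op)^*\omega_Z$ forces $\alpha=\beta$. The equivalence of the $f_i$ and the $g_i$ condition for a fixed $i$ follows exactly as in Proposition~\ref{PR:sesquilinear-forms-in-the-category-of-double-arrows} from the naturality of $\omega_i$ and the identity $\omega_{i,N}^{*_i}\omega_{i,N^{*_i}}=\id$. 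The key point is that each of these relations involves only the single hermitian structure $(*_i,\omega_i)$, so the distinct structures never interact and the statement splits over $I$.

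Granting this, I would verify that $F$ is a well-defined functor just as in Theorem~\ref{TH:equivalence-of-categories}: the analogue above guarantees $F(M,\{s_i\})\in\Herm{\TDA[I]{\catC}}$, and the identical computation shows that $F(\psi)=(\psi,(\psi^{-1})^\op)$ is an isometry and that $F$ respects composition. I would then define the candidate inverse $G\colon\Herm{\TDA[I]{\catC}}\to\SysSesq{\catC}{I}$ componentwise by
\[
G((M,N,\{f_i\},\{g_i\}),(\alpha,\alpha^\op))=(M,\{\alpha^{*_i}g_i\}_{i\in I}),\qquad G(\phi,\psi^\op)=\phi.
\]
To see that $G(\phi,\psi^\op)$ is an isometry from $(M,\{\alpha^{*_i}g_i\})$ to $(M',\{(\alpha')^{*_i}g'_i\})$, one must check $\alpha^{*_i}g_i=\phi^{*_i}(\alpha')^{*_i}g'_i\phi$ for every $i$. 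This reproduces the scalar case verbatim: the isometry relation $(\phi,\psi^\op)^*(\alpha',(\alpha')^\op)(\phi,\psi^\op)=(\alpha,\alpha^\op)$ gives $\psi\alpha'\phi=\alpha$, an identity among bare morphisms of $\catC$ that is independent of $i$, and the morphism relation gives $g'_i\phi=\psi^{*_i}g_i$ for each $i$, whence $\phi^{*_i}(\alpha')^{*_i}g'_i\phi=\phi^{*_i}(\alpha')^{*_i}\psi^{*_i}g_i=(\psi\alpha'\phi)^{*_i}g_i=\alpha^{*_i}g_i$.

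Finally I would show $F$ and $G$ are mutually quasi-inverse. That $GF=\id_{\SysSesq{\catC}{I}}$ is immediate from the definitions. For the other composite, the per-index identity $(\alpha^{*_i}g_i)^{*_i}\omega_{i,M}=\alpha^{*_i}f_i$, obtained by applying the proposition above at each $i$, yields
\[
FG(Z,(\alpha,\alpha^\op))=((M,M,\{\alpha^{*_i}f_i\},\{\alpha^{*_i}g_i\}),(\id_M,\id_M^\op)),
\]
and I would set $t_{(Z,(\alpha,\alpha^\op))}=(\id_M,\alpha^\op)$ exactly as before, checking it is an isometry onto $FG(Z,(\alpha,\alpha^\op))$ and that it is natural using $\psi\alpha'\phi=\alpha$. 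I do not expect a genuine obstacle: since $\TDA[I]{\catC}$, $F$ and $G$ are built by repeating the scalar constructions once per index, and the only ``global'' datum is the single shared morphism $\alpha$ (resp.\ $\phi$), every verification reduces either to an identity among morphisms of $\catC$ common to all $i$, or to a per-index computation already carried out in Theorem~\ref{TH:equivalence-of-categories}. The one mild point to keep straight is precisely that the single shared $\alpha$ must satisfy the hermitian compatibility $\alpha^{*_i}f_i=g_i^{*_i}\omega_{i,N}\alpha$ simultaneously for all $i\in I$.
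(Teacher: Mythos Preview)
Your proposal is correct and follows exactly the approach the paper indicates: the paper's proof is only a sketch that defines $G$ by the same formula and says ``by arguing as in the proof of Theorem~\ref{TH:equivalence-of-categories}, we see that $F$ and $G$ are mutual inverses,'' and your write-up is precisely that argument carried out in full, index by index. There is nothing to add.
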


\begin{proof}[Proof (sketch)]
    It is easy to see
    that any hermitian form over $\Herm{\TDA[I]{\catC}}$ has the form
    $((M,N,\{f_i\},\{g_i\}),(\alpha,\alpha^\op))$. Define
    a functor $G:\Herm{\TDA[I]{\catC}}\to\SysSesq{\catC}{I}$ by
    \[
    G((M,N,\{f_i\},\{g_i\}),(\alpha,\alpha^\op))=(M,\{\alpha^{*_i}g_i\}),
    \]
    \[
    G(\phi,\psi^\op)=\phi\ .
    \]
    By arguing as in the proof of Theorem~\ref{TH:equivalence-of-categories},
    we see that $F$ and $G$ are mutual inverses.
\end{proof}

As we did in section~\ref{section:equivalence-of-categories}, we can use
Theorem~\ref{TH:equivalence-of-categories-for-systems} to define hyperbolic systems of sesquilinear
forms. Namely, a system of forms $(M,\{s_i\})$ over $\catC$ will be called \emph{hyperbolic}
if $F(M,\{s_i\})$ is hyperbolic over $\TDA[I]{\catC}$. The following two propositions
are proved in the same manner as Propositions~\ref{PR:characterization-of-hyperbolic-forms-in-herm-categories} and
\ref{PR:characterization-of-hyperbolic-forms-over-rings}, respectively.

\begin{prp}
    A system of sesquilinear forms $(M,\{s_i\})$ over $\catC$ is hyperbolic
    if and only if there are $M_1,M_2\in\catC$, $f_i\in \Hom(M_2,M_1^{*_i})$,
    $g_i\in\Hom(M_1,M_2^{*_i})$ such that $M= M_1\oplus M_2$ and for all $i\in I$,
    \[
    s_i=\smallSMatII{0}{f_i}{g_i}{0}\in\Hom(M,M^{*_i})=\Hom(M_1\oplus M_2,M_1^{*_i}\oplus M_2^{*_i})\ .
    \]
    In this case, each of the sesquilinear forms $(M,s_i)$ (over $(\catC,*_i,\omega_i)$)
    is hyperbolic.
\end{prp}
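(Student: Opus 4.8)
The plan is to follow the proof of Proposition~\ref{PR:characterization-of-hyperbolic-forms-in-herm-categories} almost verbatim, carrying the index $i$ throughout and replacing the single structure $(*,\omega)$ by the family $\{*_i,\omega_i\}_{i\in I}$. Let $F$ and $G$ be the mutually inverse functors of Theorem~\ref{TH:equivalence-of-categories-for-systems}. Since hyperbolicity of a system is defined through $F$ and $G=F^{-1}$, the hyperbolic systems over $\catC$ are precisely those isometric to $G(Z\oplus Z^*,\Hyp{Z})$ as $Z$ ranges over $\TDA[I]{\catC}$, so the entire argument reduces to computing this object explicitly.

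First I would write $Z=(M,N,\{h_i\},\{g_i\})$, whence $Z^*=(N,M,\{g_i^{*_i}\omega_{i,N}\},\{h_i^{*_i}\omega_{i,N}\})$ and
\[
Z\oplus Z^*=\Big(M\oplus N,\ N\oplus M,\ \big\{\smallSMatII{h_i}{0}{0}{g_i^{*_i}\omega_{i,N}}\big\},\ \big\{\smallSMatII{g_i}{0}{0}{h_i^{*_i}\omega_{i,N}}\big\}\Big).
\]
Exactly as in the single-variable case, $\omega_Z=(\id_M,\id_N^\op)$ and $\id_{Z^*}=(\id_N,\id_M^\op)$, so $\Hyp{Z}=(\alpha,\alpha^\op)$ with $\alpha=\smallSMatII{0}{\id_N}{\id_M}{0}$. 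Feeding this into $G((M,N,\{f_i\},\{g_i\}),(\alpha,\alpha^\op))=(M,\{\alpha^{*_i}g_i\})$ and using $\alpha^{*_i}=\smallSMatII{0}{\id_{M^{*_i}}}{\id_{N^{*_i}}}{0}$, the block products collapse to give
\[
G(Z\oplus Z^*,\Hyp{Z})=\Big(M\oplus N,\ \big\{\smallSMatII{0}{h_i^{*_i}\omega_{i,N}}{g_i}{0}\big\}\Big).
\]

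To match the normal form in the statement I would then reparametrize, setting $h_i=f_i^{*_i}\omega_{i,M}$ for $f_i\in\Hom(N,M^{*_i})$. The naturality of $\omega_i$ applied to $f_i$, together with the defining identity $\omega_{i,M}^{*_i}\omega_{i,M^{*_i}}=\id_{M^{*_i}}$ of the hermitian structure $(*_i,\omega_i)$, yields $h_i^{*_i}\omega_{i,N}=f_i$, just as in Proposition~\ref{PR:characterization-of-hyperbolic-forms-in-herm-categories}. Hence $G(Z\oplus Z^*,\Hyp{Z})=(M\oplus N,\{\smallSMatII{0}{f_i}{g_i}{0}\})$, and since every choice of $M,N,\{f_i\},\{g_i\}$ arises from some $Z$ via this substitution, this simultaneously shows that all systems of the displayed shape are hyperbolic and that every hyperbolic system is isometric to one of them (with $M_1=M$ and $M_2=N$). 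I do not anticipate a genuine obstacle; the only point requiring care is the index bookkeeping, namely checking that the two reflexivity identities for $\omega_i$ invoked in the reparametrization hold separately for each $i$, which they do since each $(*_i,\omega_i)$ is by hypothesis a hermitian structure on $\catC$.

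For the final assertion, I would simply note that for each fixed $i$ the component $s_i=\smallSMatII{0}{f_i}{g_i}{0}$ has exactly the off-diagonal block form that Proposition~\ref{PR:characterization-of-hyperbolic-forms-in-herm-categories} identifies as the hyperbolic sesquilinear forms over the single hermitian category $(\catC,*_i,\omega_i)$; therefore every $(M,s_i)$ is hyperbolic.
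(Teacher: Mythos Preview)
Your proposal is correct and follows exactly the approach the paper indicates: the paper itself does not give a separate proof but states that the proposition ``is proved in the same manner as Proposition~\ref{PR:characterization-of-hyperbolic-forms-in-herm-categories}'', and your argument is precisely the index-by-index replay of that proof, including the same block computation of $G(Z\oplus Z^*,\Hyp{Z})$ and the same reparametrization $h_i=f_i^{*_i}\omega_{i,M}$.
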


\begin{prp}
    Let $A$ be a ring and let $\{\sigma_i\}_{i\in I}$ be a nonempty family of involutions of $A$.
    A system of sesquilinear forms $(M,\{s_i\})$ over $(A,\{\sigma_i\})$ is hyperbolic
    if and only if there are submodules $M_1,M_2\leq M$ such that $M=M_1\oplus M_2$
    and $s_i(M_1,M_1)=s_i(M_2,M_2)=0$ for all $i\in I$. In this case, each of the sesquilinear
    forms $(M,s_i)$ (over $(A,\sigma_i)$) is hyperbolic.
\end{prp}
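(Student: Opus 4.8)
The plan is to reduce the statement about rings with involution to the already-established characterization over hermitian categories, exactly as the preceding proposition for a single form does. The ring $A$ with its family $\{\sigma_i\}$ gives rise to a family of hermitian structures $\{*_i,\omega_i\}$ on $\catC=\rMod{A}$, and a system of sesquilinear forms $(M,\{s_i\})$ over $(A,\{\sigma_i\})$ corresponds to the system $(M,\{\rAd{(s_i)}\})$ over the hermitian category $(\catC,\{*_i,\omega_i\})$. The preceding proposition (its analogue for systems) tells us that $(M,\{s_i\})$ is hyperbolic if and only if there is a decomposition $M=M_1\oplus M_2$ and morphisms $f_i\in\Hom(M_2,M_1^{*_i})$, $g_i\in\Hom(M_1,M_2^{*_i})$ so that each $\rAd{(s_i)}$ has the off-diagonal matrix form $\smallSMatII{0}{f_i}{g_i}{0}$ with respect to the splitting $M^{*_i}=M_1^{*_i}\oplus M_2^{*_i}$.

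The bridge between these two conditions is a single elementary observation, which is precisely the content of the single-form proof: after identifying $(M_1\oplus M_2)^{*_i}$ with $M_1^{*_i}\oplus M_2^{*_i}$ via $f\leftrightarrow(f|_{M_1},f|_{M_2})$, the adjoint $\rAd{(s_i)}$ has the block form $\smallSMatII{0}{f_i}{g_i}{0}$ if and only if the two diagonal blocks vanish, and the diagonal block landing in $M_j^{*_i}$ and defined on $M_j$ is exactly the adjoint of the restriction $s_i|_{M_j\times M_j}$. Hence the vanishing of the diagonal blocks is equivalent to $s_i(M_1,M_1)=s_i(M_2,M_2)=0$. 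First I would state this identification, then carry out the straightforward block-matrix computation of $\rAd{(s_i)}$ with respect to a decomposition $M=M_1\oplus M_2$, observing that this computation is independent of $i$ in form and only invokes the definition $\rAd{(s_i)}(x)(y)=\sigma_i(s_i(y,x))$.

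The proof then concludes by combining the two equivalences: the off-diagonal form of every $\rAd{(s_i)}$ holds simultaneously for all $i\in I$ precisely when $s_i(M_1,M_1)=s_i(M_2,M_2)=0$ for all $i\in I$, and by the systems analogue of Proposition~\ref{PR:characterization-of-hyperbolic-forms-in-herm-categories} the former is equivalent to hyperbolicity of $(M,\{s_i\})$. The final sentence, that each individual $(M,s_i)$ is then hyperbolic, is immediate from the single-form characterization (Proposition~\ref{PR:characterization-of-hyperbolic-forms-over-rings}) applied one index at a time, since the same decomposition $M=M_1\oplus M_2$ witnesses hyperbolicity of each $s_i$.

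I do not anticipate any serious obstacle here: the content is genuinely the same as in the single-form case, and the only real work is bookkeeping across the index set $I$, ensuring that one fixed module decomposition $M=M_1\oplus M_2$ simultaneously puts every $\rAd{(s_i)}$ into off-diagonal form. The mild subtlety worth flagging is that the duals $M^{*_i}$ genuinely depend on $i$ (the $A$-action is twisted by $\sigma_i$), so the identification $(M_1\oplus M_2)^{*_i}\cong M_1^{*_i}\oplus M_2^{*_i}$ must be invoked separately for each $i$; but this identification is natural in the relevant sense and the restriction-of-a-form computation is uniform across indices, so no new difficulty arises.
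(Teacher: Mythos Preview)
Your proposal is correct and follows essentially the same approach as the paper: the paper simply states that this proposition is proved ``in the same manner'' as Proposition~\ref{PR:characterization-of-hyperbolic-forms-over-rings}, which is precisely the reduction you describe---identify $(M_1\oplus M_2)^{*_i}$ with $M_1^{*_i}\oplus M_2^{*_i}$, observe that the diagonal blocks of $\rAd{(s_i)}$ vanish iff $s_i(M_j,M_j)=0$, and invoke the systems analogue of Proposition~\ref{PR:characterization-of-hyperbolic-forms-in-herm-categories}. Your flagging of the $i$-dependence of the duals is apt but, as you note, causes no trouble.
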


The notion of hyperbolic systems of sesquilinear forms can be used to define Witt groups.
We leave the details to the reader.

\medskip

Let $R/K$ be a commutative ring extension. If $\catC$ and all the hermitian
structures $\{*_i,\omega_i\}_{i\in I}$ are  $K$-linear, then the scalar extension functor
$\scalarExt{R}{K}:\catC\to\catC_R$ is $1$-hermitian and duality preserving with respect to $(*_i,\omega_i)$
for all $i\in I$. Therefore, we have a functor $\SesqF{\scalarExt{R}{K}}:\SysSesq{\catC}{I}\to\SysSesq{\catC_R}{I}$
given by $\SesqF{\scalarExt{R}{K}}(M,\{s_i\}_{i\in I})=(M_R,\{(s_i)_R\}_{i\in I})$.
We thus have a notion of scalar extension for systems of bilinear forms (and it agrees
with the obvious scalar extension for systems of bilinear forms over a ring with a family of involutions,
provided the assumptions of Remark~\ref{RM:scalar-extension-in-module-categories} hold).
Using the ideas of subsection~\ref{subsection:F-commutes-with-scalar-ext}, one can show:

\begin{cor}\label{CR:F-for-systems-preserves-isometry-after-scalar-extension}
    Let $(M,\{s_i\})$, $(M',\{s'_i\})$ be two systems of sesquilinear forms over $(\catC,\{*_i,\omega_i\})$. Then
    $\SesqF{\scalarExt{R}{K}}(M,\{s_i\})$ is isometric to $\SesqF{\scalarExt{R}{K}}(M',\{s'_i\})$ if and only if
    $\SesqF{\scalarExt{R}{K}} F(M,\{s_i\})$ is isometric to $\SesqF{\scalarExt{R}{K}} F(M',\{s'_i\})$.
\end{cor}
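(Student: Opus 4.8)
The plan is to transport subsection~\ref{subsection:F-commutes-with-scalar-ext} to the systems setting, so that the corollary follows from the systems analogue of Proposition~\ref{PR:F-commutes-with-scalar-extension} exactly as Corollary~\ref{CR:F-preserves-isometry-after-scalar-extension} follows from Proposition~\ref{PR:F-commutes-with-scalar-extension}. Concretely, I would first construct a $1$-hermitian duality preserving functor $J:\TDA[I]{\catC}_R\to\TDA[I]{\catC_R}$ by
\[
J((M,N,\{f_i\},\{g_i\})_R)=(M_R,N_R,\{(f_i)_R\},\{(g_i)_R\}),\qquad J((\phi,\psi^\op)\otimes a)=(\phi\otimes a,(\psi\otimes a)^\op),
\]
with associated natural isomorphism $i:J*\to *J$ equal to the identity. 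One then checks that $J$ makes commute the evident analogue of the diagram of Proposition~\ref{PR:F-commutes-with-scalar-extension}, obtained by replacing $\Sesq{\catC}$ with $\SysSesq{\catC}{I}$, $\TDA{\catC}$ with $\TDA[I]{\catC}$, and $F$ with the functor of Theorem~\ref{TH:equivalence-of-categories-for-systems}; in particular $F\circ\SesqF{\scalarExt{R}{K}}=\SesqF{J}\circ\SesqF{\scalarExt{R}{K}}\circ F$.

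The verification that $J$ is $1$-hermitian, duality preserving, and that the diagram commutes is a direct computation, identical to the single-form case but with each relation now ranging over $i\in I$. The one point worth isolating is full-faithfulness of $J$. Here I would observe that the morphisms of $\TDA[I]{\catC}$ are pairs $(\phi,\psi^\op)$ of exactly the same shape as those of $\TDA{\catC}$ --- the families $\{f_i\},\{g_i\}$ enter only through the objects and through the defining relations $\psi^{*_i}f_i=f'_i\phi$, $\psi^{*_i}g_i=g'_i\phi$. Consequently the explicit inverse on Hom-sets exhibited in the proof of Proposition~\ref{PR:F-commutes-with-scalar-extension} transcribes verbatim, and since those defining relations are $K$-linear in $(\phi,\psi)$ they are matched by this bijection; hence $J$ is full and faithful.

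Granting this, the corollary is formal. The induced functor $\SesqF{J}:\Herm{\TDA[I]{\catC}_R}\to\Herm{\TDA[I]{\catC_R}}$ is full and faithful (a full and faithful duality preserving functor induces a full and faithful functor on hermitian forms, as recalled in Section~\ref{section:hermitian-categories}), hence reflects as well as preserves isometry; and the functor $F$ of Theorem~\ref{TH:equivalence-of-categories-for-systems} is an equivalence, so it too reflects and preserves isometry. Writing $\cong$ for \emph{isometric}, I would then chain: $\SesqF{\scalarExt{R}{K}}(M,\{s_i\})\cong\SesqF{\scalarExt{R}{K}}(M',\{s'_i\})$ iff their images under $F$ are isometric (as $F$ is an equivalence), iff (by commutativity of the diagram) $\SesqF{J}\SesqF{\scalarExt{R}{K}}F(M,\{s_i\})\cong\SesqF{J}\SesqF{\scalarExt{R}{K}}F(M',\{s'_i\})$, iff $\SesqF{\scalarExt{R}{K}}F(M,\{s_i\})\cong\SesqF{\scalarExt{R}{K}}F(M',\{s'_i\})$ (as $\SesqF{J}$ is full and faithful). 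The only step demanding genuine care is the full-faithfulness of $J$ above, and even there the index set $I$ plays an entirely passive role, so I expect no real obstacle.
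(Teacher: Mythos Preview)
Your proposal is correct and is precisely the approach indicated by the paper: the paper does not give a separate proof but simply says ``Using the ideas of subsection~\ref{subsection:F-commutes-with-scalar-ext}, one can show'' the corollary, and what you outline is exactly the transcription of Proposition~\ref{PR:F-commutes-with-scalar-extension} and Corollary~\ref{CR:F-preserves-isometry-after-scalar-extension} to the systems setting. Your observation that the index set $I$ affects only the objects and the defining relations, not the shape of morphisms, so that the explicit inverse $I$ on Hom-sets from the proof of Proposition~\ref{PR:F-commutes-with-scalar-extension} carries over verbatim, is the right way to see full-faithfulness of $J$.
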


\section{Applications}
\label{section:applications}

The following section uses the previous results to generalize various known results about hermitian forms (over
rings or reflexive hermitian categories) to systems of sesqui\-li\-near forms
over (not-necessarily reflexive) hermitian categories. Some of the consequences to follow
were obtained in \cite{BayerMoldovan12}
for hermitian forms over rings. Here we rephrase them for hermitian categories, extend them to systems of
sesquilinear forms and drop the assumption that the base module (or object) is reflexive.

\subsection{Witt's Cancelation Theorem}
\label{subsection:cancelation}

Quebbemann, Scharlau and Schulte (\cite[\S3.4]{QuSchSch79}) have proven Witt's Cancelation Theorem for unimodular
hermitian forms over hermitian categories $\catC$ satisfying the following conditions:
\begin{enumerate}
    \item[(a)] All idempotents in $\catC$ split (see subsection~\ref{subsection:transfer}).
    \item[(b)] For all $C\in\catC$, $E:=\End_{\catC}(C)$ is a \emph{complete semilocal} ring in which $2$ is invertible.
\end{enumerate}
Recall that complete semilocal means that $E/\Jac(E)$ is semisimple (i.e.\ $E$ is semilocal)
and that the standard map $E\to \invlim \{E/\Jac(R)^n\}_{n\in\N}$ is an isomorphism (i.e.\ $E$ is complete in the \emph{$\Jac(E)$-adic topology}).
In fact, condition (a) can be dropped since idempotents can be split artificially (see subsection~\ref{subsection:odd-degree} below),
or alternatively, since by applying transfer (see subsection~\ref{subsection:transfer})   one can move to a module category in which idempotents split.

We shall now use the Quebbemann-Scharlau-Schulte cancelation theorem together with Theorem~\ref{TH:equivalence-of-categories-for-systems}
to give several conditions guaranteeing cancelation for systems of sesquilinear forms.

\medskip

Our first criterion is based on the following well-known lemma.

\begin{lem}\label{LM:complete-semilocal-K-algs}
    Let $K$ be a commutative noetherian complete semilocal ring (e.g.\ a complete
    discrete valuation ring).
    Then any $K$-algebra $A$ which is finitely generated as a $K$-module is complete semilocal.
\end{lem}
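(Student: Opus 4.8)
The plan is to set $\mathfrak{m}=\Jac(K)$ and exploit that the structure map $K\to A$ lands in the center of $A$, so that $\mathfrak{m}$ acts centrally. I would reduce both halves of the conclusion (semilocality and completeness) to a comparison of the $\Jac(A)$-adic filtration with the $\mathfrak{m}$-adic filtration $\{\mathfrak{m}^nA\}_n$, together with the commutative-algebra fact that finitely generated modules over a complete noetherian ring are complete.

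First I would show $\mathfrak{m}A\subseteq\Jac(A)$. Every simple right $A$-module $S$ is cyclic, hence finitely generated over $A$ and therefore over $K$; since $S\neq 0$, Nakayama's lemma for the noetherian commutative ring $(K,\mathfrak{m})$ gives $S\mathfrak{m}\neq S$. As $\mathfrak{m}$ is central, $S\mathfrak{m}$ is an $A$-submodule of $S$, so $S\mathfrak{m}=0$ by simplicity, whence $S\cdot\mathfrak{m}A=0$ and $\mathfrak{m}A\subseteq\Ann_A(S)$. Intersecting over all simple $S$ yields $\mathfrak{m}A\subseteq\Jac(A)$. Next, $A/\mathfrak{m}A$ is a finitely generated module over the semisimple (hence Artinian) ring $K/\mathfrak{m}$, so $A/\mathfrak{m}A$ is an Artinian ring; in particular it is semilocal and its radical is nilpotent. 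Because $\mathfrak{m}A\subseteq\Jac(A)$ we have $\Jac(A/\mathfrak{m}A)=\Jac(A)/\mathfrak{m}A$, and therefore $A/\Jac(A)\cong(A/\mathfrak{m}A)/\Jac(A/\mathfrak{m}A)$ is semisimple. This proves that $A$ is semilocal.

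For completeness I would compare the two filtrations. Centrality of $\mathfrak{m}$ gives $(\mathfrak{m}A)^n=\mathfrak{m}^nA$, so $\mathfrak{m}^nA\subseteq\Jac(A)^n$ for all $n$. Conversely, nilpotence of $\Jac(A/\mathfrak{m}A)$ furnishes a $k$ with $\Jac(A)^k\subseteq\mathfrak{m}A$, hence $\Jac(A)^{kn}\subseteq(\mathfrak{m}A)^n=\mathfrak{m}^nA$. Thus the filtrations $\{\Jac(A)^n\}$ and $\{\mathfrak{m}^nA\}$ are cofinal, giving a canonical isomorphism $\invlim A/\Jac(A)^n\cong\invlim A/\mathfrak{m}^nA$ compatible with the structure maps out of $A$. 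Finally, since $K=\invlim K/\mathfrak{m}^n$ is noetherian and $\mathfrak{m}$-adically complete, the standard fact that finitely generated modules over a complete noetherian ring are complete yields $A\cong\invlim A/\mathfrak{m}^nA$. Combining the two isomorphisms shows the canonical map $A\to\invlim A/\Jac(A)^n$ is an isomorphism, i.e.\ $A$ is complete.

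The main obstacle is the inclusion $\mathfrak{m}A\subseteq\Jac(A)$ together with the cofinality comparison: both rely essentially on $\mathfrak{m}$ acting through the center of the possibly noncommutative ring $A$, which is exactly what makes $S\mathfrak{m}$ an $A$-submodule and legitimizes $(\mathfrak{m}A)^n=\mathfrak{m}^nA$. Once these are in place, the completeness of $A$ reduces to the completeness of finitely generated modules over $K$, a standard result of commutative algebra.
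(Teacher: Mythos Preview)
Your proof is correct and follows essentially the same strategy as the paper's: both establish the sandwich $\Jac(A)^m\subseteq\mathfrak{m}A\subseteq\Jac(A)$, deduce cofinality of the $\Jac(A)$-adic and $\mathfrak{m}$-adic filtrations, and appeal to the $\mathfrak{m}$-adic completeness of $A$ as a finitely generated $K$-module. The only differences are in presentation: the paper verifies $\mathfrak{m}A\subseteq\Jac(A)$ via the invertibility criterion ($1+\mathfrak{m}A\subseteq\units{A}$) rather than via annihilation of simple modules, and it outsources the nilpotence-of-radical and completeness steps to external references, whereas you argue them directly.
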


\begin{proof}
    For brevity, write $I=\Jac(K)$ and $J=\Jac(A)$.
    By \cite[Th.\ 2]{Hi60} and the proof of \cite[Pr.\ 8.8(i)]{Fi12} (for instance), $A=\invlim \{A/A(I^n)\}_{n\in \N}$.
    That $A=\invlim\{A/J^n\}_{n\in\N}$ would follow if we verify that $J^m\subseteq AI\subseteq J$ for some $m\in\N$.
    The right inclusion holds since $1+AI$ consists of right invertible elements. Indeed, for all $a\in AI$,
    we have $aA+AI=A$, so by Nakayama's Lemma (applied to the $K$-module $A$), $aA=A$.
    The existence of $m$, as well as the fact that $A$ is semilocal, follows by arguing as in \cite[Ex.~2.7.19'(ii)]{Ro88}
    (for instance).
\end{proof}

\begin{thm}\label{TH:our-cancelation-I}
    Let  $K$ be a commutative noetherian complete semilocal ring with $2\in \units{K}$, let
    $\catC$ be a $K$-category equipped with
    $K$-linear hermitian structures  $\{*_i,\omega_i\}_{i\in I}$,
    and let
    $(M,\{s_i\})$, $(M',\{s'_i\})$, $(M'',\{s''_i\})$
    be systems of sesquilinear forms over $(\catC,\{*_i,\omega_i\})$.
    Assume that $\Hom_{\catC}(M,N)$ is finitely generated as a $K$-module for all $M,N\in\catC$.
    Then
    \[
    (M,\{s_i\})\oplus (M',\{s'_i\})\simeq (M,\{s_i\})\oplus (M'',\{s''_i\})
    ~\iff~
    (M',\{s'_i\})\simeq (M'',\{s''_i\})
    .
    \]
\end{thm}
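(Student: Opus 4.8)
The plan is to reduce the cancelation problem for systems of sesquilinear forms to the Quebbemann--Scharlau--Schulte cancelation theorem for unimodular hermitian forms, using the equivalence $F:\SysSesq{\catC}{I}\xrightarrow{\sim}\Herm{\TDA[I]{\catC}}$ of Theorem~\ref{TH:equivalence-of-categories-for-systems}. Since $F$ is an equivalence of categories preserving orthogonal sums, it sends isometries to isometries and orthogonal sums to orthogonal sums; consequently the displayed equivalence of systems holds if and only if the corresponding equivalence of unimodular hermitian forms over $\TDA[I]{\catC}$ holds. Thus the whole statement reduces to verifying that $\TDA[I]{\catC}$ satisfies hypotheses (a) and (b) of the Quebbemann--Scharlau--Schulte theorem, namely that every idempotent splits and that every endomorphism ring is complete semilocal with $2$ invertible.

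The first step is therefore to compute endomorphism rings in $\TDA[I]{\catC}$. For an object $Z=(M,N,\{f_i\},\{g_i\})$, an endomorphism is a pair $(\phi,\psi^\op)$ with $\phi\in\End_{\catC}(M)$ and $\psi\in\End_{\catC}(N)$ satisfying the compatibility relations $\psi^{*_i}f_i=f_i\phi$ and $\psi^{*_i}g_i=g_i\phi$. Hence $\End_{\TDA[I]{\catC}}(Z)$ embeds as a subring of $\End_{\catC}(M)\times\End_{\catC}(N)^{\op}$, which is finitely generated as a $K$-module by the hypothesis that all $\Hom$-sets of $\catC$ are finitely generated over $K$. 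Since $K$ is noetherian, this subring is again finitely generated as a $K$-module, so Lemma~\ref{LM:complete-semilocal-K-algs} applies and shows that $\End_{\TDA[I]{\catC}}(Z)$ is complete semilocal. Invertibility of $2$ is inherited from $K$ because $2\in\units{K}$ acts invertibly on any $K$-algebra. This establishes condition (b).

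For condition (a), the splitting of idempotents, I would invoke the method alluded to in subsection~\ref{subsection:odd-degree}: idempotents can always be split artificially by passing to the idempotent completion (Karoubi envelope), and this operation does not affect whether cancelation holds for the objects originally present. Alternatively, one applies the transfer functor $\Transfer{}$ of subsection~\ref{subsection:transfer} to move into a module category $\rproj{E}$ (for a suitable $E=\End(Z\oplus Z^*)$ with a hyperbolic form), where idempotents split automatically and where the endomorphism rings remain complete semilocal by the same finite-generation argument. Either route reduces the situation to one in which the Quebbemann--Scharlau--Schulte theorem directly applies to the images $F(M,\{s_i\})$, $F(M',\{s'_i\})$, $F(M'',\{s''_i\})$, yielding the desired cancelation.

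The main obstacle is the verification of condition (b), and specifically confirming that the endomorphism ring in $\TDA[I]{\catC}$ really is finitely generated as a $K$-module: one must be careful that the passage from $\Hom$-sets of $\catC$ to $\Hom$-sets of $\TDA[I]{\catC}$ does not destroy finite generation. Because a morphism in $\TDA[I]{\catC}$ is a pair of morphisms in $\catC$ subject to linear constraints, the $\Hom$-set is a $K$-submodule of a finitely generated $K$-module, and finite generation is preserved precisely because $K$ is noetherian; this is where the noetherian hypothesis on $K$ is essential. Once this is in place, Lemma~\ref{LM:complete-semilocal-K-algs} does the remaining work, and the reduction through $F$ completes the argument. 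The handling of the index set $I$ (possibly infinite) requires only that the constraints indexed by $i\in I$ cut out a $K$-submodule, which they do regardless of the cardinality of $I$, so no finiteness on $I$ is needed.
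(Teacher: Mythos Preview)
Your proposal is correct and follows essentially the same route as the paper: reduce via the equivalence $F$ of Theorem~\ref{TH:equivalence-of-categories-for-systems} to unimodular hermitian forms over $\TDA[I]{\catC}$, observe that $\End(Z)$ embeds in $\End_{\catC}(M)\times\End_{\catC}(N)^{\op}$ as a $K$-subalgebra (the $K$-linearity of the $*_i$ being what makes the defining constraints $K$-linear), invoke noetherianity and Lemma~\ref{LM:complete-semilocal-K-algs} for condition~(b), and dispose of condition~(a) via idempotent completion or transfer. The paper's own proof is slightly terser but identical in substance.
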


\begin{proof}
    In light of Theorem~\ref{TH:equivalence-of-categories-for-systems}, it is enough to prove
    cancelation of unimodular $1$-hermitian  forms over the the category
    $\TDA[I]{\catC}$ (note that the equivalence of Theorem~\ref{TH:equivalence-of-categories-for-systems} respects orthogonal sums).
    This would follow from the cancelation theorem of \cite[\S3.4]{QuSchSch79} if we show that the endomorphism
    rings of objects in $\TDA[I]{\catC}$ are complete semilocal rings in which $2$ is invertible.
    Indeed, let $Z:=(M,N,\{f_i\},\{g_i\})\in\TDA[I]{\catC}$. Then $E:=\End(Z)$ is a subring of
    $\End_{\catC}(M)\times\End_{\catC}(N)^\op$, which is a $K$-algebra by assumption.
    Since the hermitian structures $\{*_i,\omega_i\}$ are $K$-linear,
    $E$ is in fact a $K$-subalgebra, which must be f.g.\ as a $K$-module (because this is true
    for $\End_{\catC}(M)\times\End_{\catC}(N)^\op$ and $K$ is noetherian). Thus, we are done by Lemma~\ref{LM:complete-semilocal-K-algs}
    and the fact that $2\in\units{K}$.
\end{proof}

As corollary, we get the following result which resembles \cite[Th.~8.1]{BayerMoldovan12}.

\begin{cor}
    Let $K$ be a commutative noetherian complete semilocal ring with $2\in \units{K}$, let $A$ be a $K$-algebra
    which is finitely generated as a $K$-module, and let $\{\sigma_i\}_{i\in I}$ be a family of $K$-involutions
    on $A$. Then cancelation holds for systems of sesquilinear forms over $(A,\{\sigma_i\})$ which are defined
    on finitely generated right $A$-modules.
\end{cor}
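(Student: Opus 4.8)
The plan is to obtain this as a direct application of Theorem~\ref{TH:our-cancelation-I}, after packaging the data as a $K$-linear hermitian category. Let $\catC=\rmod{A}$ denote the category of finitely generated right $A$-modules, and let each involution $\sigma_i$ induce a hermitian structure $(*_i,\omega_i)$ on $\catC$ in the manner of Example~\ref{EX:hermitian-category-obtained-from-a-ring-with-involution} and the opening discussion of Section~\ref{section:systems}. As recorded there, the assignment $(M,\{s_i\})\mapsto(M,\{\rAd{(s_i)}\})$ furnishes an isomorphism between the category of systems of sesquilinear forms over $(A,\{\sigma_i\})$ defined on finitely generated modules and the category $\SysSesq{\catC}{I}$, and this isomorphism respects orthogonal sums and carries isometries to isometries. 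Hence it suffices to check that $(\catC,\{*_i,\omega_i\})$ satisfies the hypotheses of Theorem~\ref{TH:our-cancelation-I}, namely that each $(*_i,\omega_i)$ is a $K$-linear hermitian structure on $\catC$ and that $\Hom_{\catC}(M,N)$ is finitely generated over $K$ for all $M,N\in\catC$.

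The key point is that $A$, being finitely generated as a module over the commutative noetherian ring $K$, is itself right noetherian, and every finitely generated right $A$-module is finitely presented. From this, two facts follow. First, $\catC$ is closed under each $*_i$: given a presentation $A^m\to A^n\to M\to 0$ and applying $\Hom_A(-,A)$, one realizes $M^{*_i}$ as a right $A$-submodule of $A^n$ (the $\sigma_i$-twist only relabels the module structure), which is finitely generated by noetherianity; since $\sigma_i$ is a $K$-involution, $*_i$ is $K$-linear and $\omega_i$ is a natural transformation, so $(\catC,*_i,\omega_i)$ is a $K$-linear hermitian category. Second, for $M,N\in\catC$ a surjection $A^n\twoheadrightarrow M$ embeds $\Hom_A(M,N)$ into $\Hom_A(A^n,N)\cong N^n$; as $A$, and hence $N$, is finitely generated over $K$, the module $N^n$ is finitely generated over the noetherian ring $K$, and therefore so is its $K$-submodule $\Hom_A(M,N)=\Hom_{\catC}(M,N)$.

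With both hypotheses verified, Theorem~\ref{TH:our-cancelation-I} yields cancelation for systems in $\SysSesq{\catC}{I}$, and transporting this conclusion back along the isomorphism of categories above gives exactly the asserted cancelation over $(A,\{\sigma_i\})$. The substance of the argument is entirely in the middle paragraph, and the only thing that needs care there is the noetherian bookkeeping: that module-finiteness over noetherian $K$ makes $A$ noetherian and keeps all relevant $\Hom$-modules finitely generated over $K$. Beyond invoking Theorem~\ref{TH:our-cancelation-I}, no new ingredient is required.
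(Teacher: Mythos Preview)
Your proof is correct and follows exactly the intended route: the paper states this result as an immediate corollary of Theorem~\ref{TH:our-cancelation-I} and gives no further argument, and you have supplied precisely the routine verifications (closure of $\rmod{A}$ under each $*_i$ via noetherianity of $A$, and finite generation of $\Hom$-sets over $K$) needed to place the category of finitely generated $A$-modules under the hypotheses of that theorem.
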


For the next theorem, recall that a ring $R$ is said to be \emph{semiprimary}
if $R$ is semilocal and $\Jac(R)$ is nilpotent. For example, all artinian rings are semiprimary.
Note that all semiprimary rings are complete semilocal.
It is well-known that for a ring $R$ and an idempotent $e\in R$,
$R$ is semiprimary if and only if $eRe$ and $(1-e)R(1-e)$ are semiprimary. As a result, if $M,N$
are two objects in an additive category, then $\End(M\oplus N)$ is semiprimary if and only if $\End(M)$
and $\End(N)$ are semiprimary.

\begin{thm}\label{TH:our-cancelation-II}
    Let $\catC$ be an additive category with hermitian structures $\{*_i,\omega_i\}$
    and
    let
    $(M,\{s_i\})$, $(M',\{s'_i\})$, $(M'',\{s''_i\})$
    be systems of sesquilinear forms over $(\catC,\{*_i,\omega_i\})$.
    Assume that $\End_{\catC}(M)$, $\End_{\catC}(M')$, $\End_{\catC}(M'')$ are semiprimary rings in which
    $2$ is invertible.
    Then
    \[
    (M,\{s_i\})\oplus (M',\{s'_i\})\simeq (M,\{s_i\})\oplus (M'',\{s''_i\})
    ~\iff~
    (M',\{s'_i\})\simeq (M'',\{s''_i\})
    .
    \]
\end{thm}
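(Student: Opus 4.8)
The plan is to reduce Theorem~\ref{TH:our-cancelation-II} to the Quebbemann--Scharlau--Schulte cancelation theorem via the equivalence of Theorem~\ref{TH:equivalence-of-categories-for-systems}, exactly as in the proof of Theorem~\ref{TH:our-cancelation-I}. First I would invoke Theorem~\ref{TH:equivalence-of-categories-for-systems} to translate the isometry statement about systems of sesquilinear forms over $(\catC,\{*_i,\omega_i\})$ into the corresponding statement about unimodular $1$-hermitian forms over the reflexive hermitian category $\TDA[I]{\catC}$. Since $F$ is an equivalence of categories that respects orthogonal sums, an isometry $(M,\{s_i\})\oplus(M',\{s'_i\})\simeq (M,\{s_i\})\oplus(M'',\{s''_i\})$ holds if and only if the corresponding isometry of unimodular hermitian forms $F(M,\{s_i\})\perp F(M',\{s'_i\})\simeq F(M,\{s_i\})\perp F(M'',\{s''_i\})$ holds over $\TDA[I]{\catC}$, and similarly for the right-hand side. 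Thus it suffices to prove cancelation for unimodular $1$-hermitian forms over $\TDA[I]{\catC}$.

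The Quebbemann--Scharlau--Schulte theorem applies once we verify its hypotheses on $\TDA[I]{\catC}$: that $2$ is invertible in the relevant endomorphism rings and that those endomorphism rings are semiprimary (hence complete semilocal). Condition (a) on splitting of idempotents may be disregarded as the excerpt notes. So the key step is to show that for the objects $Z,Z',Z''\in\TDA[I]{\catC}$ underlying the forms $F(M,\{s_i\})$, $F(M',\{s'_i\})$, $F(M'',\{s''_i\})$, the endomorphism ring $\End_{\TDA[I]{\catC}}(Z)$ is semiprimary with $2$ invertible, and likewise for $Z'$ and $Z''$. Here the object underlying $F(M,\{s_i\})$ is $Z=(M,M,\{s_i^{*_i}\omega_{i,M}\},\{s_i\})$, so the two components of $Z$ are both copies of $M$.

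The main obstacle, and the heart of the argument, is controlling the endomorphism ring of a twisted double arrow. As in the proof of Theorem~\ref{TH:our-cancelation-I}, for $Z=(M,N,\{f_i\},\{g_i\})$ the ring $E:=\End_{\TDA[I]{\catC}}(Z)$ embeds as a subring of $\End_{\catC}(M)\times\End_{\catC}(N)^\op$, the subring consisting of pairs $(\phi,\psi^\op)$ satisfying the compatibility relations $\psi^{*_i}f_i=f_i\phi$ and $\psi^{*_i}g_i=g_i\phi$ for all $i$. The issue is that a subring of a semiprimary ring need not be semiprimary, so unlike the situation in Theorem~\ref{TH:our-cancelation-I} (where finite generation as a $K$-module passes to subalgebras), I cannot simply cite closure under subrings. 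The plan to circumvent this is to argue that $E$ is not merely a subring but a \emph{corner-type} or idempotent-stable subring: observe that $E$ contains the identity and, more to the point, that $\End_{\TDA[I]{\catC}}(Z)$ is realized as $e\,\Lambda\,e$ for a suitable idempotent $e$ inside an ambient semiprimary ring, or failing that, that the compatibility conditions cut out a subring closed under the passage to $E/\Jac(E)$. Concretely, I would use the recollection stated just before the theorem: for an idempotent $e$ in a ring $R$, $R$ is semiprimary iff $eRe$ and $(1-e)R(1-e)$ are; and consequently $\End(M\oplus N)$ is semiprimary iff $\End(M)$ and $\End(N)$ both are.

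To make this precise I would realize $Z$ inside $\TDA[I]{\catC}$ via the ambient object $M\oplus N$ and identify $\End_{\TDA[I]{\catC}}(Z)$ with a corner of an endomorphism ring built from $\End_{\catC}(M)$ and $\End_{\catC}(N)$. Since $N=M$ for each of $Z,Z',Z''$, the ambient ring in each case is controlled by $\End_{\catC}(M)$, $\End_{\catC}(M')$, $\End_{\catC}(M'')$, which are semiprimary with $2$ invertible by hypothesis; the cited fact then shows that the relevant product and its corners are semiprimary, and $2$ remains invertible throughout. Once $\End_{\TDA[I]{\catC}}(Z)$, $\End_{\TDA[I]{\catC}}(Z')$, $\End_{\TDA[I]{\catC}}(Z'')$ are seen to be semiprimary rings in which $2$ is invertible, the Quebbemann--Scharlau--Schulte cancelation theorem yields cancelation for the unimodular $1$-hermitian forms over $\TDA[I]{\catC}$, and transporting back through the equivalence $F$ gives the desired cancelation for systems of sesquilinear forms over $(\catC,\{*_i,\omega_i\})$.
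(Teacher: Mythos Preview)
Your overall strategy is the same as the paper's: apply the equivalence $F$ of Theorem~\ref{TH:equivalence-of-categories-for-systems} and reduce to checking that the endomorphism rings in $\TDA[I]{\catC}$ of the relevant objects are semiprimary (hence complete semilocal) with $2$ invertible, so that the Quebbemann--Scharlau--Schulte cancelation theorem applies. You also correctly pinpoint the difficulty, namely that semiprimary does not pass to arbitrary subrings.

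However, your proposed fix has a genuine gap. You claim that $\End_{\TDA[I]{\catC}}(Z)$ can be realized as a corner $e\Lambda e$ of some semiprimary ring $\Lambda$ built out of $\End_{\catC}(M)$ and $\End_{\catC}(N)$, and then appeal to the fact that corners of semiprimary rings are semiprimary. But the defining conditions $\psi^{*_i}f_i=f_i\phi$ and $\psi^{*_i}g_i=g_i\phi$ are \emph{centralizer-type} (intertwining) conditions, not ``multiply on both sides by a fixed idempotent''; there is no evident idempotent $e$ and ambient semiprimary ring $\Lambda$ making $\End(Z)=e\Lambda e$. The recollection that $\End(M\oplus N)$ is semiprimary iff $\End(M)$ and $\End(N)$ are is used in the paper only to delimit a convenient hermitian subcategory of $\TDA[I]{\catC}$; it does not, by itself, give control over $\End(Z)$.

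What the paper actually does at this point is different and not elementary: it builds the triangular matrix ring $S=\smallSMatII{A}{}{H}{B^\op}$ with $A=\End(M)$, $B=\End(N)$ and $H=\bigoplus_i\Hom(M,N^{*_i})$, observes that $\End(Z)$ is precisely the subring of $A\times B^\op=\smallSMatII{A}{}{}{B^\op}\subseteq S$ consisting of elements commuting with all $\smallSMatII{0}{}{f_i}{0}$ and $\smallSMatII{0}{}{g_i}{0}$, and then invokes a nontrivial external result (\cite[Th.~4.6]{Fi12}) stating that such \emph{semi-centralizer} subrings of semiprimary rings are again semiprimary. You need either this result or some substitute for it; the corner argument as stated does not go through.
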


\begin{proof}
    As in the proof of Theorem~\ref{TH:our-cancelation-I},  it is enough to show that
    the objects in $\TDA[I]{\catC}$ have a complete semilocal endomorphism ring.
    In fact, we may restrict to those objects $Z:=(M,N,\{f_i\},\{g_i\})$ for which $\End_{\catC}(M)$ and
    $\End_{\catC}(N)$ are semiprimary. (These do form a hermitian subcategory of $\TDA[I]{\catC}$
    by the comments above.) Fix such $Z$ and let $H=\bigoplus_{i\in I}\Hom_{\catC}(M,N^{*_i})$.
    We view the morphism $\{f_i\}$ and $\{g_i\}$ as elements of $H$ in the obvious way. Let
    $A=\End(M)$ and $B=\End(N)$.
    We endow $H$ with a $(B^\op, A)$-bimodule structure by setting
    $b^\op\circ(\bigoplus_{i\in I} h_i)\circ a=\bigoplus_{i\in I}(b^{*_i}\circ h_i\circ a)$ for
    all $a\in A$, $b\in B$, $\bigoplus_i h_i\in H$. This allows us to construct the ring
    $S:=\smallSMatII{A}{}{H}{B^\op}$. It is now straightforward to check
    that $\End(Z)$ consists of those elements in $A\times B^\op=\smallSMatII{A}{}{}{B^\op}$
    that commute with $\smallSMatII{0}{}{f_i}{0}$ and $\smallSMatII{0}{}{g_i}{0}$ for all $i\in I$.
    Thus, $\End(Z)$ is a \emph{semi-centralizer}
    subring of $A\times B^\op$ in the sense of \cite[\S1]{Fi12}. By \cite[Th.\ 4.6]{Fi12}, a semi-centralizer
    subring of a semiprimary ring is semiprimary, so $\End(Z)$ is semiprimary, and in particular
    complete semilocal.
\end{proof}

\begin{cor}
    Let $A$ be a semiprimary ring with $2\in\units{A}$, and let $\{\sigma_i\}_{i\in I}$ be a family of involutions
    on $A$. Then cancelation holds for
    systems of sesquilinear forms over $(A,\{\sigma_i\})$ which are defined
    on finitely presented right $A$-modules.
\end{cor}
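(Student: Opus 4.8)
The plan is to deduce the corollary directly from Theorem~\ref{TH:our-cancelation-II}. First I would equip $\catC=\rMod{A}$ with the family of hermitian structures $\{(*_i,\omega_i)\}_{i\in I}$ induced by the involutions $\sigma_i$, exactly as in Example~\ref{EX:hermitian-category-obtained-from-a-ring-with-involution} applied to each $\sigma_i$ separately. Since $\catC$ is the category of \emph{all} right $A$-modules it is closed under every $*_i$, so these are genuine hermitian structures on $\catC$. The assignment $(M,\{s_i\})\mapsto(M,\{\rAd{(s_i)}\})$ is then an isomorphism of categories $\SysSesq{A,\{\sigma_i\}}{I}\cong\SysSesq{\catC}{I}$ respecting orthogonal sums and isometries, so cancelation for systems over $(A,\{\sigma_i\})$ is equivalent to cancelation for the corresponding systems over $(\catC,\{*_i,\omega_i\})$.

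It remains to verify the hypotheses of Theorem~\ref{TH:our-cancelation-II} for the underlying finitely presented modules $M,M',M''$, namely that each $\End_A(M)$ is semiprimary and that $2$ is invertible in it. The latter is immediate: $2=1+1$ is central in $A$ and invertible by hypothesis, so its central inverse $\tfrac12\in A$ acts on $M$ as an $A$-linear endomorphism inverting $2\cdot\id_M$, whence $2\in\units{\End_A(M)}$. Granting that the endomorphism ring of a finitely presented module over a semiprimary ring is semiprimary, Theorem~\ref{TH:our-cancelation-II} applies and the corollary follows.

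The semiprimariness claim is the part I expect to be the main obstacle. To prove it I would fix a finite presentation $A^m\xrightarrow{\ \phi\ }A^n\to M\to 0$ and form the upper-triangular ring $S=\smallSMatII{M_n(A)}{\Hom_A(A^m,A^n)}{0}{M_m(A)}$. Here $M_n(A)$ and $M_m(A)$ are semiprimary, being Morita equivalent to $A$, and an upper-triangular ring built from semiprimary corner rings and a bimodule is again semiprimary, its radical $\smallSMatII{\Jac(M_n(A))}{\Hom_A(A^m,A^n)}{0}{\Jac(M_m(A))}$ being nilpotent with semisimple quotient. Writing $\Phi=\smallSMatII{0}{\phi}{0}{0}\in S$, the set of elements of the diagonal subring $M_n(A)\times M_m(A)$ that commute with $\Phi$ is exactly $\Lambda:=\{(u,v): u\phi=\phi v\}$, which is a semi-centralizer subring of the semiprimary ring $S$ in the sense of \cite[\S1]{Fi12}; hence $\Lambda$ is semiprimary by \cite[Th.\ 4.6]{Fi12}, exactly as in the proof of Theorem~\ref{TH:our-cancelation-II}. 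Finally, since $A^n$ and $A^m$ are projective, every endomorphism of $M$ lifts---first along $A^n\to M$, then along $\phi$---to a pair $(u,v)\in\Lambda$, yielding a surjective ring homomorphism $\Lambda\to\End_A(M)$, $(u,v)\mapsto\bar u$. As a quotient of a semiprimary ring is semiprimary, $\End_A(M)$ is semiprimary, as needed. The delicate points are recognizing $\Lambda$ as a semi-centralizer subring so that \cite[Th.\ 4.6]{Fi12} applies, and using projectivity of the free modules in the presentation for the lifting; finite presentability is essential here, as it is precisely what keeps $n$ and $m$ finite and hence $S$ semiprimary.
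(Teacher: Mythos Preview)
Your reduction is exactly the paper's: verify that $\End_A(M)$ is semiprimary with $2$ invertible for every finitely presented $M$, then invoke Theorem~\ref{TH:our-cancelation-II}. The paper simply cites \cite[Th.\ 4.1]{Bj71} (or \cite[Th.\ 7.3]{Fi12}) for the semiprimariness of $\End_A(M)$ and is done in one line.

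Your third paragraph is a correct, self-contained proof of that cited fact, using the same semi-centralizer machinery \cite[Th.\ 4.6]{Fi12} already exploited in the proof of Theorem~\ref{TH:our-cancelation-II}: the triangular ring $S$ is semiprimary, $\Lambda$ is a semi-centralizer subring hence semiprimary, and $\End_A(M)$ is a quotient of $\Lambda$. This is exactly how \cite[Th.\ 7.3]{Fi12} is proved, so you have effectively reproduced the cited result rather than taken a different route. The payoff is that your argument is self-contained modulo \cite[Th.\ 4.6]{Fi12}, at the cost of repeating work the literature already records.
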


\begin{proof}
    By \cite[Th.\ 4.1]{Bj71} (or \cite[Th.\ 7.3]{Fi12}), the endomorphism ring of a finitely presented $A$-module is semiprimary.
    Now apply Theorem~\ref{TH:our-cancelation-II}.
\end{proof}

\begin{cor}
    Let $\catC$ be an \emph{abelian} category equipped with hermitian structures $\{*_i,\omega_i\}$.
    Assume that $\catC$ consists of objects of finite length. Then cancelation holds for systems of sesquilinear
    forms over $(\catC,\{*_i,\omega_i\})$.
\end{cor}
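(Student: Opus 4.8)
The plan is to deduce the statement directly from Theorem~\ref{TH:our-cancelation-II}. An abelian category is in particular additive, and cancelation only involves the three systems $(M,\{s_i\})$, $(M',\{s'_i\})$, $(M'',\{s''_i\})$, so it suffices to verify the hypotheses of Theorem~\ref{TH:our-cancelation-II} for the objects $M$, $M'$, $M''$: namely that each of $\End_\catC(M)$, $\End_\catC(M')$, $\End_\catC(M'')$ is semiprimary with $2$ invertible. The invertibility of $2$ is exactly the condition imposed in Theorem~\ref{TH:our-cancelation-II} and will be used in the same way; the real content is therefore the purely category-theoretic claim that \emph{every object of finite length in an abelian category has a semiprimary endomorphism ring}. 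Granting this claim, the corollary follows at once. I would first reduce the whole proof to this claim and then prove the claim.

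To prove the claim, fix a finite-length object $M\in\catC$ and put $E=\End_\catC(M)$ and $J=\Jac(E)$. First I would show $E$ is semilocal. Since $M$ has finite length, the chains $\ker f\subseteq\ker f^2\subseteq\cdots$ and $\im f\supseteq\im f^2\supseteq\cdots$ stabilize for each $f\in E$, so Fitting's lemma applies; together with the fact that idempotents split in an abelian category, this yields a decomposition $M\cong\bigoplus_{k=1}^r M_k^{\,n_k}$ into indecomposables, each $M_k$ having a \emph{local} endomorphism ring (an indecomposable finite-length object has no non-trivial idempotents, and every endomorphism is nilpotent or invertible). By the Krull--Schmidt theorem the $M_k$ may be taken pairwise non-isomorphic, and a standard computation identifies $E/J$ with $\prod_{k=1}^r M_{n_k}(D_k)$, where $D_k=\End_\catC(M_k)/\Jac(\End_\catC(M_k))$ is a division ring. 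Thus $E/J$ is semisimple and $E$ is semilocal.

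It remains to show that $J$ is nilpotent, which I expect to be the main obstacle. Writing an endomorphism as a matrix with respect to $M\cong\bigoplus_k M_k^{\,n_k}$, the radical $J$ consists precisely of the matrices all of whose entries are non-isomorphisms between indecomposable summands. Hence a product of $N$ elements of $J$ is a sum of composites of $N$ non-isomorphisms along paths $M_{k_0}\to M_{k_1}\to\cdots\to M_{k_N}$ of indecomposables of length at most $b:=\max_k\operatorname{length}(M_k)$. By the Harada--Sai lemma any such composite vanishes once $N\geq 2^{b}$, so $J^{N}=0$ for $N=2^{b}$ and $E$ is semiprimary. (Alternatively, one can argue that the descending chain $M\supseteq JM\supseteq J^2M\supseteq\cdots$ stabilizes by the descending chain condition and then invoke a Nakayama-type argument to force the stable term to vanish; the Harada--Sai route is cleaner since it makes the length hypothesis do the work explicitly.) This establishes the claim, and substituting it into Theorem~\ref{TH:our-cancelation-II} proves the corollary. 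The only step that is more than bookkeeping is the nilpotency of $J$, and it is precisely the finite-length assumption, via the Harada--Sai estimate, that supplies it.
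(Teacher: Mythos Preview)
Your approach is essentially identical to the paper's: reduce to Theorem~\ref{TH:our-cancelation-II} by showing that endomorphism rings of finite-length objects in an abelian category are semiprimary, and invoke the Harada--Sai lemma for the nilpotency of the radical. The only difference is that the paper simply cites this semiprimary fact as a known consequence of Harada--Sai (referring to \cite[Pr.~2.9.29]{Ro88}), whereas you unpack the argument via Fitting, Krull--Schmidt, and the explicit bound $J^{2^b}=0$; your expanded sketch is correct, including the description of $J$ as the matrices with non-isomorphism entries. Note that neither the corollary as stated nor the paper's proof explicitly addresses the hypothesis $2\in\units{\End_\catC(M)}$ required by Theorem~\ref{TH:our-cancelation-II}; your remark that it ``will be used in the same way'' is as much as the paper itself says, so this is not a defect of your proposal relative to the original.
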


\begin{proof}
    By the Hadara-Sai Lemma (\cite[Pr.\ 2.9.29]{Ro88}), the endomorphism ring of an object of finite
    length in an abelian category is semiprimary, so we are done by Theorem~\ref{TH:our-cancelation-II}.
    Alternatively, one can check directly that the category $\TDA[I]{\catC}$ is abelian and consists of objects of
    finite length, apply the Hadara-Sai Lemma to $\TDA[I]{\catC}$,
    and then use  the cancelation theorem of \cite[\S3.4]{QuSchSch79}.
\end{proof}

\begin{remark}
    It is not hard to deduce from a theorem of Camps and Dicks \cite[Cr.~2]{CaDi93} that
    if the endomorphism rings of $\catC$ are semilocal, then so are the endomorphism rings of
    $\TDA[I]{\catC}$. (Simply check  that $\End(M,N,\{f_i\},\{g_i\})$ is a
    \emph{rationally closed} subring of $\End_{\catC}(M)\times\End_{\catC}(N)^\op$ in the sense of \cite[p.\ 204]{CaDi93}.)
    By applying transfer (see subsection~\ref{subsection:transfer}) to $\TDA[I]{\catC}$, one can then move to
    the context of unimodular $1$-hermitian
    forms over semilocal rings. Cancelation theorems for such forms were obtained by various authors
    including Knebusch \cite{Kne69}, Reiter \cite{Reiter75} and
    Keller \cite{Kel88}. However, none of these apply to the general case, as in fact cancelation
    is no longer true; see \cite[\S2]{Kel88}.
    Nevertheless, the cancelation results of \cite{Kel88} can still be used to get some partial
    results about systems
    of sesquilinear forms over $\catC$; we leave the details to the reader.
\end{remark}

\remove{
\begin{remark}
    See \cite{HerSha95}, \cite{FacHer06} and related papers
    for further examples of modules with semilocal endomorphism
    ring.
\end{remark}
}

\subsection{Finiteness Results}

In the following two subsections, we generalize the finiteness results of \cite[\S10]{BayerMoldovan12}
to systems of sesquilinear forms.

\medskip

For a ring $A$, we denote by $T(A)$ the $\Z$-torsion subgroup of $A$.
Recall that if $R$ is a commutative ring,
$A$ is said to be \emph{$R$-finite} if $A_R=A \otimes_{\Z} R$ is a finitely generated $R$-module and $T(A)$ is finite.
Note that being $R$-finite passes to subrings.

The proofs of the results to follow are completely analogous to the proofs of the
corresponding statements in \cite[\S10]{BayerMoldovan12};
they are based on applying the equivalence of Theorem~\ref{TH:equivalence-of-categories-for-systems}
and then using the finiteness results of \cite{BayKeaWil89}, possibly after applying transfer.

Throughout,  $\catC$ is an additive category and $\{*_i,\omega_i\}_{i\in I}$ is
a nonempty family of hermitian structures on $\catC$.
Fix a system of sesquilinear forms $(V, \{s_i\}_{i \in I})$ over $(\catC, \{\*_i,\omega_i\})$
and let $Z(V,\{s_i\})=(V,V,\{s_i^{*_i}\omega_{i,V}\},\{\rAd{s_i}\})\in\TDA[I]{\catC}$.
(Note that $F(V,\{s_i\})=(Z,(\id_V,\id_V^\op))$
with $F$ as in Theorem~\ref{TH:equivalence-of-categories-for-systems}).

\begin{thm}
If there exists a non-zero integer $m$ such that $\End_{\catC}(V)$ is $\Z[1/m]$-finite, then there
are  finitely many isometry classes of summands of $(V,\{s_i\})$.
\end{thm}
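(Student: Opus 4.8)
The plan is to transport the question, via the two equivalences at our disposal, into the setting of unimodular hermitian forms over a ring with involution, where finiteness is supplied by \cite{BayKeaWil89}. First I would apply Theorem~\ref{TH:equivalence-of-categories-for-systems}: the functor $F:\SysSesq{\catC}{I}\to\Herm{\TDA[I]{\catC}}$ is an equivalence that respects orthogonal sums, so isometry classes of orthogonal summands of $(V,\{s_i\})$ correspond bijectively to isometry classes of orthogonal summands of $F(V,\{s_i\})=(Z,h_0)$, where $Z=Z(V,\{s_i\})$ and $h_0:=(\id_V,\id_V^\op)$. Thus it suffices to bound the number of isometry classes of summands of the unimodular $1$-hermitian form $(Z,h_0)$ over the reflexive hermitian category $\TDA[I]{\catC}$.

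Next I would transfer into the endomorphism ring (subsection~\ref{subsection:transfer}). Set $E=\End_{\TDA[I]{\catC}}(Z)$ and let $\sigma$ be the involution on $E$ induced by $h_0$. The functor $\Transfer{(Z,h_0)}:\TDA[I]{\catC}\vert_Z\to\rproj{E}$ is fully faithful, $1$-hermitian and duality preserving, and it carries $(Z,h_0)$ to a unimodular hermitian form over $(E,\sigma)$ whose underlying module is the free module $E$. Any summand of $(Z,h_0)$ lies in $\TDA[I]{\catC}\vert_Z$, and $\SesqF{\Transfer{(Z,h_0)}}$, being fully faithful, reflects isometries and sends summands of $(Z,h_0)$ to summands of $\Transfer{(Z,h_0)}(Z,h_0)$. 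Hence the isometry classes of summands of $(Z,h_0)$ inject into the isometry classes of unimodular hermitian forms over $(E,\sigma)$ whose underlying module is a direct summand of $E$. (No splitting of idempotents is required: full faithfulness alone produces this injection.)

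It remains to bound the target set, for which I would check that $E$ is $\Z[1/m]$-finite and invoke \cite{BayKeaWil89}. By the computation in the proof of Theorem~\ref{TH:our-cancelation-II}, $E$ is a subring of $\End_{\catC}(V)\times\End_{\catC}(V)^\op$. Since $\Z[1/m]$-finiteness is preserved under passage to the opposite ring and to finite products and descends to subrings, the hypothesis that $\End_{\catC}(V)$ is $\Z[1/m]$-finite forces $E$ to be $\Z[1/m]$-finite. The finiteness results of \cite{BayKeaWil89}, applied exactly as in \cite[\S10]{BayerMoldovan12}, then yield finitely many isometry classes of unimodular hermitian forms over $(E,\sigma)$ whose underlying module is a summand of the fixed module $E$ (in particular of bounded rank), and we are done.

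The main obstacle is this final step: invoking the finiteness theorem of \cite{BayKeaWil89} in precisely the form needed. What makes it go through is that every summand in sight is a summand of the single module $E$, so we are only ever dealing with forms of bounded rank; one must then confirm that the $\Z[1/m]$-finiteness of $E$ is exactly the hypothesis that theorem requires. The reductions---$F$ respecting orthogonal sums, and transfer reflecting isometries and carrying summands to summands---are routine consequences of results already established above.
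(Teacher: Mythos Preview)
Your proposal is correct and follows essentially the same approach as the paper: apply the equivalence of Theorem~\ref{TH:equivalence-of-categories-for-systems}, then transfer into the endomorphism ring, and invoke the finiteness results of \cite{BayKeaWil89} after checking that $E=\End_{\TDA[I]{\catC}}(Z)$ is $\Z[1/m]$-finite as a subring of $\End_{\catC}(V)\times\End_{\catC}(V)^\op$. The paper only sketches this (``completely analogous to \cite[\S10]{BayerMoldovan12}''), and you have filled in the details accurately; your remark that full faithfulness of transfer suffices to produce the injection, without needing idempotents to split, is a nice observation.
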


\begin{thm}
Assume that there is a non-zero integer $m$ such that the ring $\End_{\TDA[I]{\catC}}(Z(V,\{s_i\}))$ is $\Z[1/m]$-finite
(e.g.\ if $\End_{\catC}(V)$ is $\Z[1/m]$-finite).
Then there exist only finitely many isometry classes of systems of sesquilinear forms
$(V', \{s'_i\}_{i \in I})$ over $\catC$ such that $Z(V', \{s'_i\}) \simeq Z(V,\{s_i\})$
(as objects in $\TDA[I]{\catC}$).
\end{thm}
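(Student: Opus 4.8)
The plan is to push the problem through the equivalence $F$ of Theorem~\ref{TH:equivalence-of-categories-for-systems} and then transfer it into a ring with involution, where the required finiteness is already recorded in \cite{BayKeaWil89}. Write $Z=Z(V,\{s_i\})$. Since $F(V',\{s'_i\})=(Z(V',\{s'_i\}),(\id_{V'},\id_{V'}^\op))$ and $F$ is an equivalence of categories, two systems are isometric exactly when their $F$-images are isometric unimodular $1$-hermitian forms over $\TDA[I]{\catC}$, and every such form is isometric to one in the image of $F$. Hence the isometry classes of systems $(V',\{s'_i\})$ with $Z(V',\{s'_i\})\simeq Z$ are in bijection with the isometry classes of unimodular $1$-hermitian forms $(W,h)$ over $\TDA[I]{\catC}$ whose underlying object $W$ is isomorphic to $Z$. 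Transporting each such form along a chosen isomorphism $W\xrightarrow{\sim}Z$, it is enough to bound the number of isometry classes of unimodular $1$-hermitian forms carried by the single object $Z$.

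Next I would invoke transfer into the endomorphism ring. The category $\TDA[I]{\catC}$ is reflexive by construction, and $Z$ carries the unimodular $1$-hermitian form $h_0:=(\id_V,\id_V^\op)$, namely the form part of $F(V,\{s_i\})$. Put $E=\End_{\TDA[I]{\catC}}(Z)$ with the involution $\sigma(f)=h_0^{-1}f^*h_0$. By \cite[Pr.\ 2.4]{QuSchSch79} the functor $\Transfer{(Z,h_0)}:\TDA[I]{\catC}\vert_Z\to\rproj{E}$ of subsection~\ref{subsection:transfer} is fully faithful, duality preserving and $1$-hermitian, and it sends $Z$ to the free rank-one right $E$-module $\Hom(Z,Z)=E$. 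Since $\Transfer{(Z,h_0)}$ is fully faithful, the induced functor on form categories (still denoted $\Transfer{(Z,h_0)}$, see section~\ref{section:hermitian-categories}) is fully faithful as well, and a fully faithful functor between categories whose morphisms are isometries is injective on isometry classes. Consequently the isometry classes of unimodular $1$-hermitian forms on $Z$ inject into the isometry classes of unimodular $1$-hermitian forms of rank one over $(E,\sigma)$.

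Finally I would apply the finiteness input. By hypothesis $E$ is $\Z[1/m]$-finite, so the finiteness results of \cite{BayKeaWil89}, applied to the ring with involution $(E,\sigma)$ and the free module of rank one, produce only finitely many isometry classes of unimodular $1$-hermitian forms over $(E,\sigma)$ of that rank; together with the injection of the previous paragraph this gives the theorem. The main obstacle I expect is not a single hard computation but the careful chaining of the three reductions: in particular one must check that transfer stays fully faithful on $\TDA[I]{\catC}\vert_Z$ without assuming that idempotents split (this is exactly \cite[Pr.\ 2.4]{QuSchSch79}, so only fullness and faithfulness are used and essential surjectivity is never invoked), and one must match the $\Z[1/m]$-finiteness hypothesis on $E$ with the precise input demanded by the cited finiteness theorem.
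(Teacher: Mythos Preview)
Your proposal is correct and follows essentially the same approach the paper indicates: apply the equivalence $F$ of Theorem~\ref{TH:equivalence-of-categories-for-systems} to reduce to counting isometry classes of unimodular $1$-hermitian forms on the fixed object $Z$, then transfer into $(E,\sigma)$ and invoke the finiteness results of \cite{BayKeaWil89}. The paper omits the details (referring to the analogous argument in \cite[\S10]{BayerMoldovan12}), and your write-up fills them in accurately, including the observation that only full faithfulness of $\Transfer{(Z,h_0)}$ is needed, not essential surjectivity. For completeness you might add one sentence covering the parenthetical ``e.g.'': since $\End_{\TDA[I]{\catC}}(Z)$ is a subring of $\End_{\catC}(V)\times\End_{\catC}(V)^\op$ and $\Z[1/m]$-finiteness passes to subrings, the hypothesis on $\End_{\catC}(V)$ suffices.
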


\subsection{Finiteness of The Genus}

Let $\catC$ be a hermitian category admitting a nonempty family of hermitian structures $\{*_i,\omega_i\}_{i\in I}$.
We say that two systems of sesquilinear forms $(M,\{s_i\})$, $(M',\{s'_i\})$ are \emph{of the same genus} if they
become isometric after applying $\scalarExt{\Z_p}{\Z}$ for every prime number $p$ (where $\Z_p$ are the $p$-adic integer).
(See Remark~\ref{RM:scalar-extension-in-module-categories} for conditions under which this definition of genus
agrees with the naive definition of genus for module categories.)
As in \cite[Th.\ 10.3]{BayerMoldovan12}, we have:

\begin{thm}
    Let $(M,\{s_i\})$ be a system of sesquilinear forms over $(\catC,\{*_i,\omega_i\})$, and assume
    that $\End(M)$ is $\Q$-finite. Then the genus of $(M,\{s_i\})$ contains only a finite number of isometry classes of
    systems of sesquilinear forms.
\end{thm}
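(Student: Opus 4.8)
The plan is to push the question through the equivalence $F$ of Theorem~\ref{TH:equivalence-of-categories-for-systems}, reducing it to a finiteness-of-genus statement for unimodular $1$-hermitian forms over the \emph{reflexive} category $\TDA[I]{\catC}$, and then to transfer into an endomorphism ring, where the classical results of \cite{BayKeaWil89} apply. Write $F(M,\{s_i\})=(Z,h_0)$, where $Z=(M,M,\{s_i^{*_i}\omega_{i,M}\},\{s_i\})\in\TDA[I]{\catC}$ and $h_0=(\id_M,\id_M^\op)$. Since $F$ is an equivalence (hence a bijection on isometry classes), and since by Corollary~\ref{CR:F-for-systems-preserves-isometry-after-scalar-extension} a system $(M',\{s'_i\})$ lies in the genus of $(M,\{s_i\})$ if and only if $F(M',\{s'_i\})$ lies in the genus of $(Z,h_0)$ (as unimodular hermitian forms over $\TDA[I]{\catC}$, using $\scalarExt{\Z_p}{\Z}$ for every prime $p$), it suffices to bound the number of isometry classes in the genus of $(Z,h_0)$.

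Next I would descend to a ring with involution. First note that $E:=\End_{\TDA[I]{\catC}}(Z)$ is $\Q$-finite: it is a subring of $\End_{\catC}(M)\times\End_{\catC}(M)^{\op}$, the class of $\Q$-finite rings is closed under finite products and under passage to the opposite ring, and $\Q$-finiteness is inherited by subrings. The object $Z$ carries the unimodular $1$-hermitian form $h_0$, so subsection~\ref{subsection:transfer} furnishes an involution $\sigma$ on $E$ together with the fully faithful, $1$-hermitian, duality-preserving transfer functor $\Transfer{(Z,h_0)}\colon\TDA[I]{\catC}|_Z\to\rproj{E}$. Because transfer commutes with scalar extension (subsection~\ref{subsection:scalar-extension-commutes-with-transfer}) and preserves isometries, it sends the genus of $(Z,h_0)$ injectively, on isometry classes, into the genus of the transferred unimodular hermitian form over the $\Q$-finite ring with involution $(E,\sigma)$.

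It would then remain to quote the finiteness of the genus for unimodular hermitian forms over a $\Q$-finite ring with involution, which is precisely the input provided by \cite{BayKeaWil89}. Combined with the injectivity of the previous step, this bounds the number of isometry classes in the genus of $(Z,h_0)$, and translating back through $F$ bounds the genus of $(M,\{s_i\})$, as desired.

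The step I expect to be the main obstacle is making the transfer legitimate for \emph{every} member of the genus, i.e.\ verifying that each form $(Z',h')$ in the genus of $(Z,h_0)$ actually lies in the full subcategory $\TDA[I]{\catC}|_Z$ on which $\Transfer{(Z,h_0)}$ is defined, so that no isometry class of the genus is lost. Membership in the genus is a purely local condition -- it only guarantees $Z'\otimes_{\Z}\Z_p\cong Z\otimes_{\Z}\Z_p$ for every prime $p$ -- so one must argue, exactly as in the proof of \cite[Th.~10.3]{BayerMoldovan12}, that such a locally isomorphic object is a direct summand of a finite power of $Z$; this is where the order structure coming from $\Q$-finiteness of $E$ and a local-global argument are essential. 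Once this membership is secured, the three steps above combine to yield the finiteness of the genus.
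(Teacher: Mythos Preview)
Your proposal is correct and follows essentially the same route as the paper, which does not give a detailed argument but states that the proof is ``completely analogous'' to \cite[Th.~10.3]{BayerMoldovan12}: apply the equivalence $F$ of Theorem~\ref{TH:equivalence-of-categories-for-systems}, observe that $\End_{\TDA[I]{\catC}}(Z)$ is $\Q$-finite as a subring of $\End_{\catC}(M)\times\End_{\catC}(M)^{\op}$, transfer into the endomorphism ring, and invoke the finiteness results of \cite{BayKeaWil89}. Your identification of the main obstacle---ensuring that every member of the genus of $(Z,h_0)$ actually lies in $\TDA[I]{\catC}|_Z$ so that transfer applies---is exactly the point that has to be handled by the local-global and order-theoretic arguments from \cite{BayKeaWil89} and \cite[\S10]{BayerMoldovan12}, and the paper implicitly defers to those sources for it.
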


\subsection{Forms That Are Trivial in The Witt Group}

Let $\catC$ be a hermitian category.
By definition, a
unimodular $\epsilon$-hermitian (resp.\ sesquilinear) form $(M,s)$
is trivial  in $\Witt[\epsilon]{\catC}$ (resp.\
$\WittS{\catC}$)
if and only if there are
unimodular $\epsilon$-hermitian (resp.\ sesquilinear) hyperbolic forms $(H_1,h_1)$, $(H_2,h_2)$
such that $(M,s)\oplus (H_1,h_1)\simeq (H_2,h_2)$. In this section, we will show
that under mild assumptions, this implies that $(M,s)$ is hyperbolic.

\begin{lem}\label{LM:only-on-hyp-form-on-an-object}
    Let $M\in\catC$, and assume that $M$ is
    a (finite) direct sum of objects with local endomorphism ring.
    Then, up to isometry, there is at most one $\epsilon$-hermitian hyperbolic form on $M$.
\end{lem}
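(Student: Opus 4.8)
The plan is to show that the isometry class of an $\epsilon$-hermitian hyperbolic form on $M$ is already determined by $M$ itself. Any such form is, by definition, isometric to $(Q\oplus Q^*,\Hyp[\epsilon]{Q})$ for some reflexive object $Q$ with $Q\oplus Q^*\cong M$, so it suffices to prove that $\Hyp[\epsilon]{Q}$ does not depend, up to isometry, on the choice of such a $Q$. Since $M$ is a finite direct sum of objects with local endomorphism rings, I would first invoke the Krull--Schmidt--Azumaya theorem to fix a decomposition $M\cong\bigoplus_k L_k^{\,n_k}$ into pairwise non-isomorphic indecomposables $L_k$ with $\End_\catC(L_k)$ local, and to conclude that every direct summand of $M$ is again a sum of the $L_k$; in particular $Q\cong\bigoplus_k L_k^{\,m_k}$ for some multiplicities $m_k\le n_k$.

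Next I would analyze how the duality acts on these indecomposables. Because $Q$ is reflexive, $M^*\cong Q^*\oplus Q^{**}\cong Q\oplus Q^*\cong M$, so $*$ permutes the isomorphism classes of the $L_k$; writing $L_k^*\cong L_{k^*}$ defines an involution $k\mapsto k^*$ on the index set (note $\End_\catC(L_k^*)\cong\End_\catC(L_k)^{\op}$ is again local). Applying $*$ to $Q\cong\bigoplus_k L_k^{\,m_k}$ gives $Q^*\cong\bigoplus_k L_{k}^{\,m_{k^*}}$, and comparing with $M\cong Q\oplus Q^*$ yields the crucial numerical relation $n_k=m_k+m_{k^*}$ for every $k$.

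The computation then rests on two routine facts about hyperbolic forms, which I would record separately: $\Hyp[\epsilon]{A\oplus B}\cong\Hyp[\epsilon]{A}\perp\Hyp[\epsilon]{B}$, and $\Hyp[\epsilon]{P}\cong\Hyp[\epsilon]{P^*}$ for every reflexive $P$ (the isometry being the swap built from $\omega_P$, corrected by a sign on one summand when $\epsilon=-1$). The first gives $\Hyp[\epsilon]{Q}\cong\perp_k\Hyp[\epsilon]{L_k}^{\,m_k}$, and I would then regroup this orthogonal sum according to the orbits of $k\mapsto k^*$. On a two-element orbit $\{k,k^*\}$ the second fact turns the contribution $\Hyp[\epsilon]{L_k}^{\,m_k}\perp\Hyp[\epsilon]{L_{k^*}}^{\,m_{k^*}}$ into $\Hyp[\epsilon]{L_k}^{\,m_k+m_{k^*}}=\Hyp[\epsilon]{L_k}^{\,n_k}$, while on a fixed point $k=k^*$ the relation $n_k=2m_k$ forces $m_k=n_k/2$. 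In either case the contribution is governed only by the $L_k$ and the $n_k$, that is, only by $M$, so $\Hyp[\epsilon]{Q}\cong\Hyp[\epsilon]{Q'}$ for any two admissible decompositions, which is the assertion.

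The main obstacle I anticipate is bookkeeping rather than a conceptual difficulty: one must make sure Krull--Schmidt--Azumaya is applied correctly to extract both the summand statement and the multiplicities, and one must handle the sign twist in $\Hyp[\epsilon]{P}\cong\Hyp[\epsilon]{P^*}$ when $\epsilon=-1$ (choosing the diagonal sign $\operatorname{diag}(\epsilon\,\id_P,\id_{P^*})$ to fix the off-diagonal blocks). Everything else is a direct orthogonal-sum manipulation.
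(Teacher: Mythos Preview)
Your proposal is correct and follows essentially the same strategy as the paper's proof: both decompose the hyperbolic form as an orthogonal sum of $\Hyp[\epsilon]{N_i}$ over the indecomposable summands of $Q$ (your $L_k^{m_k}$), use that $\Hyp[\epsilon]{P}\simeq\Hyp[\epsilon]{P^*}$ so each term depends only on the unordered pair $\{[N_i],[N_i^*]\}$, and then invoke Krull--Schmidt to see that the multiset of these pairs is determined by $M$. Your orbit-by-orbit bookkeeping with the relation $n_k=m_k+m_{k^*}$ is just an explicit unpacking of the paper's terser claim that ``the unordered list $\{[N_1],[N_1^*]\},\dots,\{[N_r],[N_r^*]\}$ is uniquely determined by $M$.''
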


\begin{proof}
    For $X\in\catC$, let $[X]$ denote the isomorphism class of $X$.
    The Krull-Schmidt Theorem (e.g.\ see \cite[pp.\ 237 ff.]{Ro88}) implies that if $M\cong \bigoplus_{i=1}^t M_i$
    with each $M_i$ indecomposable, then the unordered list $[M_1],\dots,[M_t]$ is determined by $M$.

    Let $(M,s)$ be an $\epsilon$-hermitian hyperbolic form on $M$, say $(M,s)\simeq(N\oplus N^*,\Hyp[\epsilon]{N})$.
    Write $N\cong \oplus_{i=1}^r N_i$ with each $N_i$ indecomposable.
    Then $s\simeq \bigoplus_{i=1}^r\Hyp[\epsilon]{N_i}$.
    It is easy to check that the isometry
    class of $\Hyp[\epsilon]{N_i}$ depends only on the set $\{[N_i],[N_i^*]\}$. Furthermore, using the Krull-Schmidt
    Theorem, one easily verifies that the unordered list  $\{[N_1],[N_1^*]\},\dots,\{[N_r],[N_r^*]\}$
    is uniquely determined by $M$. It follows that $(M,s)$ is isometric to a sesquilinear form which is determined by
    $M$ up to isometry.
\end{proof}

\begin{prp}\label{PR:characterization-of-differences-of-hyp-forms}
    Let $\catC$ be a hermitian category satisfying conditions (a), (b) of subsection~\ref{subsection:cancelation}.
    Then a unimodular $\epsilon$-hermitian form $(M,s)$
    is trivial in $\Witt[\epsilon]{\catC}$ if and only if it is hyperbolic.
\end{prp}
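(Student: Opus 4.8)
The \emph{if} direction is immediate: by construction the class in $\Witt[\epsilon]{\catC}$ of a hyperbolic form is zero, since hyperbolic forms generate the subgroup of $\WGr{\epsilon}{\catC}$ by which we quotient. For the converse I would start from the definition recalled before the statement: assume $(M,s)$ is trivial in $\Witt[\epsilon]{\catC}$, so there are hyperbolic forms $H_1\simeq(P\oplus P^*,\Hyp[\epsilon]{P})$ and $H_2\simeq(Q\oplus Q^*,\Hyp[\epsilon]{Q})$ with $(M,s)\oplus H_1\simeq H_2$. The goal is to show $(M,s)$ is hyperbolic, and the plan is to transfer the problem into a ring with involution and there invoke the classical ``Witt-trivial implies hyperbolic'' statement over complete semilocal rings.

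First I would reduce to a reflexive subcategory amenable to transfer. Since $(M,s)$ is unimodular, $M$ is reflexive, and $P,Q$ are reflexive as they underlie unimodular hyperbolic forms. Put $N=M\oplus P\oplus Q$, $M_0=N\oplus N^*$, and equip $M_0$ with the unimodular $1$-hermitian form $h_0=\Hyp{N}$. The full subcategory $\catC\vert_{M_0}$ consists of reflexive objects, because $\omega$ is additive and a direct summand of a reflexive object is reflexive; moreover $\catC\vert_{M_0}$ inherits conditions (a) and (b) from $\catC$ (the Hom-sets are unchanged and summands of objects of $\catC\vert_{M_0}$ lie in $\catC\vert_{M_0}$). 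Thus $\catC\vert_{M_0}$ is a \emph{reflexive} hermitian category in which all idempotents split, so by subsection~\ref{subsection:transfer} the transfer functor $\Transfer{(M_0,h_0)}\colon\catC\vert_{M_0}\to\rproj{E}$, with $E=\End_{\catC}(M_0)$ and $\sigma(f)=h_0^{-1}f^*h_0$, is a $1$-hermitian \emph{equivalence}. It therefore induces an equivalence $\Herm[\epsilon]{\catC\vert_{M_0}}\xrightarrow{\sim}\Herm[\epsilon]{\rproj{E}}\cong\Herm[\epsilon]{E,\sigma}$ that respects orthogonal sums and carries hyperbolic forms to hyperbolic forms. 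Since $M,P,Q\in\catC\vert_{M_0}$ and hyperbolicity of a form on such an object is intrinsic to $\catC\vert_{M_0}$, the problem is equivalent to the corresponding statement for the image of $(M,s)$ over $(E,\sigma)$, where by condition (b) the ring $E$ is complete semilocal with $2\in\units{E}$.

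It then remains to prove: a unimodular $\epsilon$-hermitian form over a complete semilocal ring with $2$ invertible that is trivial in the Witt group is hyperbolic. This is the classical ingredient, and I expect it to be the main obstacle. I would argue it by \emph{sublagrangian reduction}: the isometry $(M,s)\oplus\Hyp[\epsilon]{P}\simeq\Hyp[\epsilon]{Q}$ carries the Lagrangian $P$ of $\Hyp[\epsilon]{P}$ to a totally isotropic direct summand $L\cong P$ of $\Hyp[\epsilon]{Q}$; since $2$ is invertible and idempotents split, $L$ can be completed to a Lagrangian and split off, yielding $\Hyp[\epsilon]{Q}\simeq\Hyp[\epsilon]{P}\oplus H_3$ with $H_3$ hyperbolic. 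Substituting into $(M,s)\oplus\Hyp[\epsilon]{P}\simeq\Hyp[\epsilon]{P}\oplus H_3$ and applying Witt cancelation (available because $(E,\sigma)$ satisfies conditions (a),(b), by subsection~\ref{subsection:cancelation}) gives $(M,s)\simeq H_3$, hence $(M,s)$ is hyperbolic. The delicate point is precisely the splitting of $L$: one uses that conditions (a),(b) force Krull--Schmidt to hold (complete semilocal endomorphism rings are semiperfect, so every object is a finite sum of objects with local endomorphism rings, as in the hypothesis of Lemma~\ref{LM:only-on-hyp-form-on-an-object}), which both controls the isometry type of the hyperbolic pieces and makes the orthogonal-complement construction behave well.
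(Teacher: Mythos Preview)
Your approach is correct but differs from the paper's in a meaningful way. You transfer into a ring with involution and then appeal to sublagrangian reduction (extending the totally isotropic summand $L\cong P$ to a Lagrangian of $\Hyp[\epsilon]{Q}$, splitting off $\Hyp[\epsilon]{P}$, and cancelling). The paper never leaves $\catC$: it observes that conditions (a),(b) force Krull--Schmidt, uses it to deduce directly that $M\cong N\oplus N^*$ for some $N$ (the multiset of indecomposable summands of $M$ is $*$-stable with even multiplicity at self-dual summands, since this holds for both $H_1$ and $H_2$), and then applies Lemma~\ref{LM:only-on-hyp-form-on-an-object} itself---not just its hypothesis---to conclude that the two hyperbolic forms $\Hyp[\epsilon]{N}\oplus h_1$ and $h_2$ on the object $M\oplus H_1\cong H_2$ must be isometric. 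Combined with $s\oplus h_1\simeq h_2$ and cancellation, this gives $s\simeq\Hyp[\epsilon]{N}$.

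What each buys: the paper's route is entirely self-contained (Krull--Schmidt, the uniqueness lemma, and the Quebbemann--Scharlau--Schulte cancellation are all at hand), and it avoids both transfer and any orthogonal-complement construction. Your route is the classical one over rings and would work, but the step you flag as delicate---that a totally isotropic direct summand of a hyperbolic space extends to a Lagrangian, making $H_3$ hyperbolic---is a genuine lemma you would need to import (it is standard with $2$ invertible, but not proved in the paper). Note also that you invoke Lemma~\ref{LM:only-on-hyp-form-on-an-object} only for its \emph{hypothesis}; the paper uses its \emph{conclusion} as the decisive shortcut.
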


\begin{proof}
    Note first that conditions (a) and (b) imply that every object of $\catC$ is a sum of objects with local
    endomorphism ring, hence we may apply the Krull-Schmidt Theorem to $\catC$. (For example, this follows from
    \cite[Th.\ 2.8.40]{Ro88} since the endomorphism rings of $\catC$ are \emph{semiperfect}.)
    Let $(M,s)$ be a unimodular $\epsilon$-hermitian form such that $(M,s)\equiv 0$ in $\Witt[\epsilon]{\catC}$.
    Then there are unimodular $\epsilon$-hermitian  hyperbolic forms $(H_1,h_1)$, $(H_2,h_2)$
    such that $(M,s)\oplus (H_1,h_1)\simeq (H_2,h_2)$. Using the Krull-Schmidt Theorem,
    it is  easy to see that there is $N\in \catC$ such that $M\cong N\oplus N^*$.
    Thus, we may consider $\Hyp[\epsilon]{N}$ as a hermitian form on $M$.
    By Lemma~\ref{LM:only-on-hyp-form-on-an-object}, we have $\Hyp[\epsilon]{N}\oplus h_1\simeq h_2$, implying
    $\Hyp[\epsilon]{N}\oplus h_2\simeq s\oplus h_2$. Therefore,
    by the cancelation theorem of \cite[\S3.4]{QuSchSch79}, $s\simeq\Hyp[\epsilon]{N}$, as required.
\end{proof}

\begin{prp}\label{PR:characterization-of-differences-of-hyp-forms-II}
    Let $\catC$ be a hermitian category in which all idempotents split and such that either
    \begin{enumerate}
        \item[(1)] $\catC$ is $K$-linear, where $K$
        is a noetherian complete semilocal ring with $2\in\units{K}$, and all $\Hom$-sets in $\catC$
        are finitely generated as $K$-modules, or
        \item[(2)] for all $M\in\catC$, $\End_{\catC}(M)$ is semiprimary and $2\in\units{\End_{\catC}(M)}$.
    \end{enumerate}
    Then a sesquilinear form $(M,s)$
    is trivial in $\WittS{\catC}$ if and only if it is hyperbolic.
\end{prp}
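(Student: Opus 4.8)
The plan is to transport the statement across the equivalence $F\colon\Sesq{\catC}\to\Herm{\TDA{\catC}}$ of Theorem~\ref{TH:equivalence-of-categories} and then invoke Proposition~\ref{PR:characterization-of-differences-of-hyp-forms}, which is the corresponding statement for unimodular hermitian forms. Recall that $F$ respects orthogonal sums, carries hyperbolic sesquilinear forms to hyperbolic hermitian forms (this is how hyperbolicity was defined, Definition~\ref{DF:hyperbolic-forms}), and induces the isomorphism $\WittS{\catC}\cong\Witt{\TDA{\catC}}$. Consequently $(M,s)$ is trivial in $\WittS{\catC}$ if and only if $F(M,s)$ is trivial in $\Witt{\TDA{\catC}}$, and $(M,s)$ is hyperbolic if and only if $F(M,s)$ is. So it suffices to apply Proposition~\ref{PR:characterization-of-differences-of-hyp-forms} with $\epsilon=1$ to the reflexive hermitian category $\TDA{\catC}$, and for this I must verify that $\TDA{\catC}$ satisfies conditions (a) and (b) of subsection~\ref{subsection:cancelation}.

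For condition (b) I would simply reuse the endomorphism-ring computations already carried out for the cancelation theorems. An endomorphism of $Z=(M,N,f,g)\in\TDA{\catC}$ is a pair $(\phi,\psi^\op)$ with $\phi\in\End_{\catC}(M)$ and $\psi\in\End_{\catC}(N)$, so $\End(Z)$ is a subring of $\End_{\catC}(M)\times\End_{\catC}(N)^\op$. Under hypothesis (1), the argument in the proof of Theorem~\ref{TH:our-cancelation-I} shows that this subring is finitely generated as a $K$-module, hence complete semilocal by Lemma~\ref{LM:complete-semilocal-K-algs}; under hypothesis (2), since now \emph{every} object of $\catC$ has semiprimary endomorphism ring, the semi-centralizer argument in the proof of Theorem~\ref{TH:our-cancelation-II} shows that $\End(Z)$ is semiprimary, hence complete semilocal. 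In either case $2$ is invertible in $\End(Z)$, because $\tfrac12\id_Z=(\tfrac12\id_M,(\tfrac12\id_N)^\op)$ again satisfies the relations defining a morphism (here $(\tfrac12\id_N)^*=\tfrac12\id_{N^*}$ by additivity of $*$).

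For condition (a) I would split idempotents directly. If $(\phi,\psi^\op)\in\End(Z)$ is idempotent, then $\phi$ and $\psi$ are idempotents of $\catC$ and so split by hypothesis, say via $i\colon M'\to M$, $j\colon M\to M'$ with $ji=\id_{M'}$, $ij=\phi$, and $i'\colon N'\to N$, $j'\colon N\to N'$ with $j'i'=\id_{N'}$, $i'j'=\psi$. Setting $Z'=(M',N',(i')^*f i,(i')^*g i)$, I would check that $\iota=(i,j'^\op)\colon Z'\to Z$ and $\pi=(j,i'^\op)\colon Z\to Z'$ are morphisms of $\TDA{\catC}$ (using the defining relations $f\phi=\psi^*f$, $g\phi=\psi^*g$ of the endomorphism $(\phi,\psi^\op)$ together with $ji=\id_{M'}$, $j'i'=\id_{N'}$ and $\psi i'=i'$), and that $\pi\iota=\id_{Z'}$ while $\iota\pi=(\phi,\psi^\op)$; this splits the idempotent. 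With (a) and (b) in hand, Proposition~\ref{PR:characterization-of-differences-of-hyp-forms} applies to $\TDA{\catC}$ and the claim follows.

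The main obstacle is the bookkeeping in condition (a): one must choose the auxiliary maps and, crucially, the right twisted morphisms $(i')^*f i$, $(i')^*g i$ defining $Z'$, so that $\iota$ and $\pi$ genuinely satisfy the compatibility relations built into the definition of morphisms in $\TDA{\catC}$. The verification hinges on combining the idempotent identities with the relations $f\phi=\psi^*f$ and $g\phi=\psi^*g$. Everything else is transport of structure along the established equivalence together with a reuse of the semilocality computations from subsection~\ref{subsection:cancelation}.
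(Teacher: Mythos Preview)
Your proposal is correct and follows essentially the same route as the paper: transport across $F$, cite the endomorphism-ring computations of Theorems~\ref{TH:our-cancelation-I} and~\ref{TH:our-cancelation-II} for condition~(b), verify condition~(a), and apply Proposition~\ref{PR:characterization-of-differences-of-hyp-forms}. The only difference is that the paper dismisses~(a) as routine (pointing forward to Lemma~\ref{LM:Karoubi}(ii), which shows $\TDA{\catC}^\circ\simeq\TDA{\catC^\circ}$, hence $\TDA{\catC}$ is pseudo-abelian when $\catC$ is), whereas you carry out the splitting explicitly; your construction is correct and is exactly what underlies that lemma.
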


\begin{proof}
    It is enough to verify that $F(M,s)$ is hyperbolic in $\TDA{\catC}$ (Theorem~\ref{TH:equivalence-of-categories}).
    The proofs of Theorems~\ref{TH:our-cancelation-I}
    and~\ref{TH:our-cancelation-II} imply that $\TDA{\catC}$ satisfies
    condition (b) of subsection~\ref{subsection:cancelation}, and condition (a) is routine (see also Lemma~\ref{LM:Karoubi}(ii) below).
    Therefore,  $F(M,s)$ is hyperbolic by Proposition~\ref{PR:characterization-of-differences-of-hyp-forms-II}.
\end{proof}


\begin{cor}\label{CR:the-map-Witt-to-WittS-is-injective}
    Under the assumptions of Proposition~\ref{PR:characterization-of-differences-of-hyp-forms-II},
    the map $\Witt{\catC}\to\WittS{\catC}$ is injective.
\end{cor}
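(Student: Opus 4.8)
The plan is to prove injectivity by showing that the kernel of the homomorphism $\Witt{\catC}\to\WittS{\catC}$ is trivial, reducing everything to the two characterizations of hyperbolicity already established. First I would observe that every class in $\Witt{\catC}$ is represented by a single unimodular $1$-hermitian form: given a difference $[(M_1,s_1)]-[(M_2,s_2)]$, the negation identity $-[(M_2,s_2)]=[(M_2,-s_2)]$ lets me rewrite it as $[(M_1,s_1)\oplus(M_2,-s_2)]$. Here $(M_2,-s_2)$ is again unimodular $1$-hermitian, and the identity holds because $(M_2,s_2)\oplus(M_2,-s_2)$ is a hyperbolic unimodular $1$-hermitian form, with the diagonal of $M_2\oplus M_2$ serving as a Lagrangian. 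This is where invertibility of $2$ enters: under either hypothesis~(1) or~(2) of Proposition~\ref{PR:characterization-of-differences-of-hyp-forms-II}, the element $2=2\cdot\id_M$ is a unit in every endomorphism ring $\End_{\catC}(M)$ (in case~(1) because $2\in\units{K}$ acts invertibly on all $\Hom$-sets).

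With this reduction in hand, let $(M,s)$ be a unimodular $1$-hermitian form whose class lies in the kernel. By the very definition of the map, the image of $[(M,s)]$ is precisely the class in $\WittS{\catC}$ of the sesquilinear form underlying $(M,s)$, and the hypothesis says this class is zero. Since the standing assumptions of the corollary are exactly those of Proposition~\ref{PR:characterization-of-differences-of-hyp-forms-II}, that proposition applies and yields that $(M,s)$ is hyperbolic as a sesquilinear form in the sense of Definition~\ref{DF:hyperbolic-forms}.

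Finally I would invoke the last assertion of Proposition~\ref{PR:characterization-of-hyperbolic-forms-in-herm-categories}: a unimodular $\epsilon$-hermitian form is hyperbolic as a sesquilinear form if and only if it is hyperbolic as a unimodular $\epsilon$-hermitian form. Applied to $(M,s)$ with $\epsilon=1$, this upgrades sesquilinear hyperbolicity to genuine hermitian hyperbolicity, so that $[(M,s)]=0$ in $\Witt{\catC}$. Hence the kernel is trivial and the map is injective.

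The main obstacle I anticipate is not the final chain of implications, which is essentially immediate once Propositions~\ref{PR:characterization-of-differences-of-hyp-forms-II} and~\ref{PR:characterization-of-hyperbolic-forms-in-herm-categories} are in place, but rather the bookkeeping of the first step: one must check that an arbitrary kernel element really is represented by a \emph{single} form, which rests on the negation identity and hence on $2$ being invertible. I would therefore spell out carefully why the hypotheses force $2\in\units{\End_{\catC}(M)}$ for every $M\in\catC$, and why this makes $(M,s)\oplus(M,-s)$ hyperbolic as a unimodular $1$-hermitian form.
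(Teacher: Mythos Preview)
Your argument is correct and follows exactly the route the paper has in mind: the paper's proof is the one-line ``This follows from Propositions~\ref{PR:characterization-of-differences-of-hyp-forms-II} and~\ref{PR:characterization-of-hyperbolic-forms-in-herm-categories}'', and your write-up simply unpacks that sentence, including the standard reduction (via $-[(M,s)]=[(M,-s)]$ when $2$ is invertible) to representing a kernel element by a single unimodular $1$-hermitian form. That reduction is indeed needed to invoke Proposition~\ref{PR:characterization-of-differences-of-hyp-forms-II} as stated, so your care there is warranted, but it is routine rather than an obstacle.
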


\begin{proof}
    This follows from Propositions~\ref{PR:characterization-of-differences-of-hyp-forms-II}
    and~\ref{PR:characterization-of-hyperbolic-forms-in-herm-categories}.
\end{proof}

\subsection{Odd Degree Extensions}
\label{subsection:odd-degree}

Throughout this subsection, $L/K$ is an odd degree field extension and $\Char K\neq 2$.
A well known theorem of Springer asserts that two unimodular hermitian forms over $K$ become isometric over
$L$ if and only if they are already isometric over $K$. Moreover,
the restriction map (i.e.\ the scalar extension map) $r_{L/K}: \Witt{K} \rightarrow \Witt{L}$ is injective.
Both statements were extended to hermitian forms over finite dimensional $K$-algebras
with $K$-linear involution  in \cite[Pr.\ 1.2 and Th.\ 2.1]{BayLen90} (see also \cite{Fain} for
a version in which $L/K$ is replaced with an extension of complete discrete valuation rings).
In this section, we extend these results to sesquilinear forms over hermitian categories.

\begin{thm}\label{TH:our-Springer}
    Let $\catC$ be an additive $K$-category such that $\dim_K\Hom(M,M')$ is finite
    for all $M,M'\in\catC$. Let $\{*_i,\omega_i\}_{i\in I}$ be a nonempty family
    of $K$-linear hermitian structures
    on $\catC$ and let $(M,\{s_i\})$, $(M',\{s'_i\})$ be two systems of sesquilinear
    forms over $(\catC,\{*_i,\omega_i\})$. Then $\scalarExt{L}{K}(M,\{s_i\})\simeq \scalarExt{L}{K}(M',\{s'_i\})$
    if and only if $(M,\{s_i\})\simeq(M',\{s'_i\})$.
\end{thm}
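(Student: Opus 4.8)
The plan is to reduce the statement, via the equivalence of Theorem~\ref{TH:equivalence-of-categories-for-systems}, to a Springer-type statement about \emph{unimodular $1$-hermitian forms} over the reflexive hermitian category $\TDA[I]{\catC}$, and then to transfer that into the setting of a $K$-algebra with $K$-linear involution so that the known result of \cite[Pr.\ 1.2]{BayLen90} applies. Since the equivalence $F:\SysSesq{\catC}{I}\to\Herm{\TDA[I]{\catC}}$ is compatible with scalar extension (Corollary~\ref{CR:F-for-systems-preserves-isometry-after-scalar-extension} together with the functor $J$ of the system analogue of Proposition~\ref{PR:F-commutes-with-scalar-extension}), isometry of $\scalarExt{L}{K}(M,\{s_i\})$ and $\scalarExt{L}{K}(M',\{s'_i\})$ is equivalent to isometry of $\scalarExt{L}{K}F(M,\{s_i\})$ and $\scalarExt{L}{K}F(M',\{s'_i\})$ as unimodular hermitian forms over $\TDA[I]{\catC_L}$. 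Thus it suffices to prove Springer's theorem for unimodular $1$-hermitian forms over the reflexive $K$-linear hermitian category $\TDA[I]{\catC}$, noting that this category is again $K$-linear with finite-dimensional $\Hom$-spaces (each $\Hom$-set embeds into a finite product of the finite-dimensional spaces $\Hom_{\catC}(M,M')$ and $\Hom_{\catC}(N',N)$).

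First I would fix the two hermitian forms $h=F(M,\{s_i\})$ and $h'=F(M',\{s'_i\})$, living on objects $Z,Z'\in\TDA[I]{\catC}$, and choose a single object large enough to contain both: by the construction in subsection~\ref{subsection:transfer}, applied to the \emph{reflexive} category $\TDA[I]{\catC}$, there is a unimodular $1$-hermitian form $(W,h_0)$ with $Z,Z'\in\TDA[I]{\catC}\vert_W$. Set $E=\End_{\TDA[I]{\catC}}(W)$; this is a $K$-algebra that is finite-dimensional over $K$, since it sits inside a finite product of the finite-dimensional spaces $\Hom_{\catC}(\cdot,\cdot)$. The transfer functor $\Transfer{(W,h_0)}$ is fully faithful and $1$-hermitian (as $h_0$ is $1$-hermitian), so it sends our two forms to unimodular hermitian forms over $(E,\sigma)$ for the involution $\sigma$ induced by $h_0$, and it reflects isometries. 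Crucially, by subsection~\ref{subsection:scalar-extension-commutes-with-transfer}, transfer commutes with $\scalarExt{L}{K}$, so the whole question becomes: two unimodular hermitian forms over the finite-dimensional $K$-algebra-with-involution $(E,\sigma)$ become isometric after base change to $L$ iff they are already isometric over $K$. This is exactly \cite[Pr.\ 1.2]{BayLen90}.

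The one genuinely delicate point is the interaction between \emph{artificial idempotent splitting} and the transfer step: the category $\TDA[I]{\catC}$ need not have split idempotents, so $\Transfer{(W,h_0)}$ is fully faithful but not necessarily an equivalence, and the image of $\TDA[I]{\catC}\vert_W$ may be a proper subcategory of $\rproj{E}$. The hard part will be checking that fully faithfulness plus compatibility with scalar extension is enough: I would argue that since $\Transfer{}$ and $\scalarExt{L}{K}$ commute and $\Transfer{}$ reflects isometries, an isometry over $L$ of the \emph{transferred} forms descends to an isometry over $K$ of the transferred forms (by \cite[Pr.\ 1.2]{BayLen90}, which concerns $\rproj{E}$ and $\rproj{E_L}$), and this descended isometry lies automatically in the image of the fully faithful functor $\Transfer{(W,h_0)}$, hence comes from a unique isometry of $Z$ and $Z'$ in $\TDA[I]{\catC}$. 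Pulling this back through the equivalence $F$ yields the desired isometry $(M,\{s_i\})\simeq(M',\{s'_i\})$. The reverse implication is trivial, since $\scalarExt{L}{K}$ is a functor and therefore preserves isometries.
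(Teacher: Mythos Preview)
Your proof is correct and follows essentially the same strategy as the paper's: reduce via the equivalence $F$ and Corollary~\ref{CR:F-for-systems-preserves-isometry-after-scalar-extension} to unimodular $1$-hermitian forms over $\TDA[I]{\catC}$, then apply transfer (using that it commutes with $\scalarExt{L}{K}$) and the Bayer--Lenstra result for finite-dimensional $K$-algebras with involution. The only cosmetic differences are that the paper takes the orthogonal sum $F(M,\{s_i\})\oplus F(M',\{s'_i\})$ itself as the anchor form for transfer (rather than a separately constructed $(W,h_0)$) and invokes \cite[Th.~2.1]{BayLen90} for the isometry-reflection statement, whereas Pr.~1.2 there is the Witt-group injectivity.
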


\begin{proof}
    By Corollary~\ref{CR:F-for-systems-preserves-isometry-after-scalar-extension}, it is enough
    to prove $\scalarExt{L}{K}F(M,\{s_i\})\simeq \scalarExt{L}{K}F(M',\{s'_i\})$
    if and only if $F(M,\{s_i\})\simeq F(M',\{s'_i\})$ (with
    $F$ as in
    Theorem~\ref{TH:equivalence-of-categories-for-systems}).
    Write $(Z,(\alpha,\alpha^\op))=F(M,\{s_i\})\oplus F(M',\{s'_i\})$
    and let $E=\End(Z)$.
    Then $E$ is a $K$-subalgebra of $\End(M\oplus M')\times\End(M\oplus M')^\op$, which is finite dimensional.
    By applying $\Transfer{(Z,(\alpha,\alpha^\op))}$ (see subsection \ref{subsection:transfer}), we reduce to
    showing that two \mbox{$1$-hermitian} forms over
    $E$ are isometric over $E\otimes_K L$ if and only if they are isometric over $E$,
    which is just \cite[Th.\ 2.1]{BayLen90}. (Note that we used
    the fact that transfer commutes with $\scalarExt{L}{K}$ in the sense of
    subsection \ref{subsection:scalar-extension-commutes-with-transfer}.)
\end{proof}

\begin{cor}
    Let $A$ be a finite dimensional $K$-algebra and let $\{\sigma_i\}_{i\in I}$
    be a nonempty family of $K$-involutions on $A$. Let $(M,\{s_i\})$, $(M',\{s'_i\})$ be two systems of sesquilinear
    forms over $(A,\{\sigma_i\})$. If $M$ and $M'$ are of finite type, then $\scalarExt{L}{K}(M,\{s_i\})\simeq \scalarExt{L}{K}(M',\{s'_i\})$
    if and only if $(M,\{s_i\})\simeq(M',\{s'_i\})$.
\end{cor}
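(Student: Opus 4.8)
The plan is to deduce the statement directly from Theorem~\ref{TH:our-Springer} by realizing the systems of sesquilinear forms over $(A,\{\sigma_i\})$ as systems over an appropriate $K$-linear hermitian category. First I would let $\catC$ be the full subcategory of $\rMod{A}$ consisting of all finite type right $A$-modules; since $A$ is finite dimensional over the field $K$, these are precisely the finite dimensional right $A$-modules. Each $K$-involution $\sigma_i$ endows $\rMod{A}$ with a hermitian structure $(*_i,\omega_i)$ as in Example~\ref{EX:hermitian-category-obtained-from-a-ring-with-involution}, and by the correspondence recorded there, a system $(M,\{s_i\})$ over $(A,\{\sigma_i\})$ corresponds to a system over $(\catC,\{*_i,\omega_i\})$ via $s_i\mapsto \rAd{(s_i)}$, compatibly with isometries. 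By hypothesis $M$ and $M'$ lie in $\catC$.

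Next I would verify that $(\catC,\{*_i,\omega_i\})$ satisfies the hypotheses of Theorem~\ref{TH:our-Springer}. The category $\catC$ is an additive $K$-category, and since every object is finite dimensional over $K$ we have $\Hom_A(M,M')\subseteq\Hom_K(M,M')$, whence $\dim_K\Hom_{\catC}(M,M')<\infty$ for all $M,M'\in\catC$. Each functor $*_i$ maps $\catC$ into itself, because $M^{*_i}=\Hom_A(M,A)$ is finite dimensional whenever $M$ is; thus $(*_i,\omega_i)$ restricts to a hermitian structure on $\catC$, and it is $K$-linear because $\sigma_i$ is a $K$-involution. The family $\{*_i,\omega_i\}_{i\in I}$ is nonempty by assumption. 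Theorem~\ref{TH:our-Springer} then applies and gives $\scalarExt{L}{K}(M,\{s_i\})\simeq\scalarExt{L}{K}(M',\{s'_i\})$ if and only if $(M,\{s_i\})\simeq(M',\{s'_i\})$, where here $\scalarExt{L}{K}$ is the \emph{categorical} scalar extension functor of subsection~\ref{subsection:ring-extension}.

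The one point requiring genuine care, and the main obstacle, is matching this categorical scalar extension with the naive scalar extension of forms over $(A\otimes_K L,\{\sigma_i\otimes\id\})$ that is implicitly meant in the statement. For this I would invoke Remark~\ref{RM:scalar-extension-in-module-categories} with $S=A$ and $R=L$: since $L$ is flat over the field $K$ and every object of $\catC$ is finitely presented, the comparison functor $G$ of that remark is full and faithful on $\catC$ and identifies $\catC_L=\catC\otimes_K L$ with a full subcategory of $\rMod{(A\otimes_K L)}$, under which $\scalarExt{L}{K}$ becomes the ordinary extension $M\mapsto M\otimes_K L$. One then checks that under this identification the transported dual functors and adjoints agree with those induced by the extended involutions $\sigma_i\otimes\id$, so that $\scalarExt{L}{K}(M,\{s_i\})$ corresponds to the system $(M\otimes_K L,\{s_i\otimes\id\})$ over $(A\otimes_K L,\{\sigma_i\otimes\id\})$. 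With this dictionary in hand, the abstract isometry equivalence furnished by Theorem~\ref{TH:our-Springer} is exactly the asserted equivalence for systems over $(A,\{\sigma_i\})$, completing the proof.
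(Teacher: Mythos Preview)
Your proposal is correct and is precisely the intended argument: the paper states this corollary without proof, treating it as an immediate application of Theorem~\ref{TH:our-Springer} to the category of finite-type right $A$-modules, and you have carefully spelled out the verification of the hypotheses and the compatibility of categorical versus module-theoretic scalar extension via Remark~\ref{RM:scalar-extension-in-module-categories}.
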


To state the analogue of the injectivity of
$r_{L/K}: \Witt{K} \rightarrow \Witt{L}$ for hermitian categories, we need to introduce additional notation.

\smallskip

An additive category $\catC$ is called \emph{pseudo-abelian} if all idempotents in $\catC$ split.
Any additive category $\catC$ admits a \emph{pseudo-abelian closure} (e.g.\ see \cite[Th.\ 6.10]{Kar78}), namely, a pseudo-abelian additive
category $\catC^\circ$ equipped with an additive functor $A\mapsto A^\circ:\catC\to \catC^\circ$, such that the pair $(\catC^\circ, A\mapsto A^\circ)$
is \emph{universal}. The category $\catC^\circ$ is unique up to equivalence and the functor $A\mapsto A^\circ$
turns out to be faithful and full.
The category $\catC^\circ$ can be realized as the category of pairs $(M,e)$ with $M\in\catC$ and $e\in\End_{\catC}(M)$ an idempotent.
The $\Hom$-sets in $\catC^\circ$ are given by $\Hom_{\catC^\circ}((M,e),(M',e'))=e'\Hom_{\catC}(M,M')e$ and the composition is the same
as in $\catC$. Finally, set $M^\circ=(M,\id_M)$ and $f^\circ =f$ for any object $M\in\catC$ and any morphism
$f$ in $\catC$. For simplicity, we will use only this particular realization of $\catC^\circ$. Nevertheless,
the universality implies that the statements to
follow hold for any pseudo-abelian closure.

Assume $\catC$ admits a $K$-linear hermitian structure $(*,\omega)$.
Then $\catC^\circ$ is clearly a $K$-category, and moreover, it
has a $K$-linear hermitian structure
given by $(M,e)^*=(M^*,e^*)$ and $\omega_{(M,e)}=e^{**}\omega_M e\in\Hom_{\catC^\circ}((M,e),(M^{**},e^{**}))$.
Furthermore, the functor $M\mapsto M^\circ$ is $1$-hermitian and duality preserving (the isomorphism
$(M^*)^\circ\to (M^\circ)^*$ being $\id_M$), so
we have a faithful and full functor $(M,s)\mapsto (M,s)^\circ=(M^\circ,s)$ from $\Sesq{\catC}$ to $\Sesq{\catC^\circ}$.
Henceforth, consider $\catC$ (resp.\ $\Sesq{\catC}$) as a full subcategory of $\catC^\circ$ (resp.\ $\Sesq{\catC^\circ}$), i.e\
identify $M^\circ$ (resp.\ $(M,s)^\circ$) with $M$ (resp.\ $(M,s)$).

\begin{lem}\label{LM:Karoubi}
    Let $\catC$, $\catC'$ be two hermitian categories and let $F:\catC\to \catC'$
    be an $\epsilon$-hermitian duality preserving functor. Then:
    \begin{enumerate}
        \item[(i)] $F$ extends to an $\epsilon$-hermitian duality preserving functor $F^\circ:\catC^\circ\to
        \catC'^\circ$. If $F$ is faithful and full, then so is $F^\circ$.
        \item[(ii)] There is a $1$-hermitian  duality preserving functor $G:\TDA{\catC}^\circ\to\TDA{\catC^\circ}$.
        The functor $G$ fixes $\TDA{\catC}$ and induces an equivalence of categories.
    \end{enumerate}
\end{lem}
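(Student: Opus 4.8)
The plan is to define $F^\circ\colon\catC^\circ\to\catC'^\circ$ by $F^\circ(M,e)=(FM,Fe)$ on objects and $F^\circ(\phi)=F\phi$ on morphisms. Since $e^2=e$ forces $(Fe)^2=Fe$, and since a morphism $\phi\in e'\Hom_{\catC}(M,M')e$ satisfies $F\phi=Fe'\cdot F\phi\cdot Fe$, the image $F\phi$ lies in $(Fe')\Hom_{\catC'}(FM,FM')(Fe)$, so $F^\circ$ is a well-defined additive functor. For the duality I would set
\[
i^\circ_{(M,e)}=(Fe)^*\, i_M\, F(e^*)\colon F^\circ\big((M,e)^*\big)\to \big(F^\circ(M,e)\big)^*,
\]
using $(M,e)^*=(M^*,e^*)$; note $i^\circ_{(M,\id_M)}=i_M$, so $i^\circ$ restricts to $i$. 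The naturality square of $i$ applied to the endomorphism $e$ reads $i_M F(e^*)=(Fe)^* i_M$, and combining this with the idempotency of $Fe$ and $(Fe)^*$ shows that $F(e^*)\,i_M^{-1}\,(Fe)^*$ inverts $i^\circ_{(M,e)}$ in $\catC'^\circ$; hence $i^\circ$ is a natural isomorphism. The naturality square for $i^\circ$ and the $\epsilon$-hermitian identity for $F^\circ$ then follow by substituting $\omega_{(M,e)}=e^{**}\omega_M e$ and invoking naturality of $\omega$, naturality of $i$, and the $\epsilon$-hermitian property of $F$. Finally, since $F^\circ$ acts as $F$ on the inherited Hom-sets $e'\Hom_{\catC}(M,M')e$, faithfulness is immediate, and fullness follows by sending $g\in(Fe')\Hom_{\catC'}(FM,FM')(Fe)$ to $e'he\in e'\Hom_{\catC}(M,M')e$ for any preimage $h$ of $g$ under $F$.

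\textbf{Part (ii).} An idempotent endomorphism of $Z=(M,N,f,g)\in\TDA{\catC}$ is a pair $\pi=(\phi,\psi^\op)$ with $\phi\in\End_{\catC}(M)$, $\psi\in\End_{\catC}(N)$ idempotent and $f\phi=\psi^*f$, $g\phi=\psi^*g$. I would define
\[
G\big((M,N,f,g),(\phi,\psi^\op)\big)=\big((M,\phi),(N,\psi),f\phi,g\phi\big),
\]
and $G(\Phi,\Psi^\op)=(\Phi,\Psi^\op)$ on morphisms, reinterpreting the same underlying arrows inside $\TDA{\catC^\circ}$. The compatibility relations give $\psi^*(f\phi)\phi=f\phi$, so $f\phi$ indeed lies in $\Hom_{\catC^\circ}((M,\phi),(N,\psi)^*)=\psi^*\Hom_{\catC}(M,N^*)\phi$; likewise the condition $\Theta=\pi'\Theta\pi$ for a morphism $\Theta=(\Phi,\Psi^\op)$ of $\TDA{\catC}^\circ$ unpacks to $\Phi=\phi'\Phi\phi$ and $\Psi=\psi\Psi\psi'$, placing $(\Phi,\Psi^\op)$ in the correct Hom-sets of $\TDA{\catC^\circ}$, and the arrow relations of $\TDA{\catC^\circ}$ follow from those of $\TDA{\catC}$ after absorbing the idempotents via $f\phi=\psi^*f$. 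Taking $\phi=\id_M$, $\psi=\id_N$ gives $G(Z,\id_Z)=((M,\id_M),(N,\id_N),f,g)$, so $G$ fixes the canonical copy of $\TDA{\catC}$ inside $\TDA{\catC^\circ}$.

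\textbf{Part (ii), hermitian data and equivalence.} The pleasant point I expect is that $G$ commutes with duality on the nose. Computing $\big(G(Z,\pi)\big)^*$ in $\TDA{\catC^\circ}$ and $G\big((Z,\pi)^*\big)=G(Z^*,\pi^*)$ and simplifying both with $f\phi=\psi^*f$, $g\phi=\psi^*g$, the idempotency of $\phi,\psi$, and the naturality identity $\psi^{**}\omega_N=\omega_N\psi$, both should reduce to $((N,\psi),(M,\phi),g^*\omega_N\psi,f^*\omega_N\psi)$. Hence the associated natural isomorphism may be taken to be the identity; and since $\omega_{(Z,\pi)}=\pi=\id_{(Z,\pi)}$ in the reflexive category $\TDA{\catC}^\circ$ is carried by $G$ to $\omega_{G(Z,\pi)}=\id_{G(Z,\pi)}$, the functor $G$ is $1$-hermitian. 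For the equivalence, faithfulness and fullness hold because $G$ keeps the underlying morphism data fixed, the $\TDA{\catC^\circ}$-arrow relations being equivalent to the $\TDA{\catC}$-arrow relations under the idempotent identities above; essential surjectivity is immediate, since an arbitrary $W=((M,d),(N,c),\mathfrak{f},\mathfrak{g})\in\TDA{\catC^\circ}$ is hit exactly by $(Z,\pi)$ with $Z=(M,N,\mathfrak{f},\mathfrak{g})$ and $\pi=(d,c^\op)$ (here $\mathfrak{f}\in c^*\Hom_{\catC}(M,N^*)d$ makes $\pi$ an idempotent endomorphism, and $\mathfrak{f}d=\mathfrak{f}$, $\mathfrak{g}d=\mathfrak{g}$).

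\textbf{Main obstacle.} The conceptual content is light; the real work, all in part (ii), is bookkeeping: ensuring every arrow produced by $G$ lands in the correct ``idempotent-sandwiched'' Hom-set of $\TDA{\catC^\circ}$, and verifying that the two candidate duals coincide so that the comparison isomorphism can be taken to be the identity. Both hinge on repeated use of the two compatibility relations $f\phi=\psi^*f$, $g\phi=\psi^*g$ together with naturality of $\omega$, and on careful tracking of the idempotents and of the opposite-arrow convention in $\TDA{}$.
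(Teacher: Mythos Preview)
Your proposal is correct and follows essentially the same construction as the paper: in part~(i) you set $F^\circ(M,e)=(FM,Fe)$ exactly as the paper does, and in part~(ii) your functor $G$ coincides with the paper's, since the idempotent relations $f\phi=\psi^*f$ force your entries $f\phi,\,g\phi$ to equal the paper's $\psi^*f\phi,\,\psi^*g\phi$. The paper merely records these definitions and writes ``the rest is routine'' and ``the details are left to the reader,'' whereas you spell out the natural isomorphism $i^\circ$, the on-the-nose duality compatibility, and the fullness/essential surjectivity arguments; all of this is a faithful elaboration rather than a different route.
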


\begin{proof}
    (i) Define $F^\circ(M,e)=(FM,Fe)\in\catC'^\circ$. The rest is routine.

    (ii)
    Let $G$ send $((M,M',f,g),(e,e'^\op))\in\TDA{\catC}^\circ$
    to $((M,e),(M,e'),e'^*fe,e'^*ge)$ and any morphism to itself.
    The details are left to the reader.
\end{proof}

Observe that the category $\catC_L$ may not be pseudo-abelian even when $\catC$ is. We thus
set $\catC_L^\circ:=(\catC_L)^\circ$.

\begin{thm}
    Let $(\catC,*,\omega)$ be a pseudo-abelian $K$-linear hermitian category such that
    $\dim_K\Hom(M,M')$ is finite
    for all $M,M'\in\catC$. Then the maps
    \[\WittF[\epsilon]{\scalarExt{L}{K}}:\Witt[\epsilon]{\catC}\to\Witt[\epsilon]{\catC_L^\circ}
    \qquad
    \textrm{and}
    \qquad
    \Witt{\scalarExt{L}{K}}:\WittS{\catC}\to\WittS{\catC_L^\circ}
    \]
    are injective.
\end{thm}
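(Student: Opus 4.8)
The plan is to reduce the two injectivity statements to the known injectivity of the restriction map for $\epsilon$-hermitian forms over finite dimensional $K$-algebras, namely \cite[Pr.\ 1.2]{BayLen90}, by combining transfer into an endomorphism ring with the equivalence of Theorem~\ref{TH:equivalence-of-categories}. I would treat the two maps in parallel, since the sesquilinear case reduces to the hermitian case via $\WittS{\catC}\cong\Witt{\TDA{\catC}}$ and Proposition~\ref{PR:F-commutes-with-scalar-extension}; the crucial point is that all the relevant commutativities (of $F$, of $\scalarExt{L}{K}$, and of transfer) have already been established in subsections~\ref{subsection:F-commutes-with-scalar-ext} and~\ref{subsection:scalar-extension-commutes-with-transfer}.

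First I would take a unimodular $\epsilon$-hermitian form $(M,s)$ over $\catC$ whose class lies in the kernel of $\WittF[\epsilon]{\scalarExt{L}{K}}$, so that $\scalarExt{L}{K}(M,s)$ is trivial in $\Witt[\epsilon]{\catC_L^\circ}$. The goal is to show $(M,s)$ is already trivial in $\Witt[\epsilon]{\catC}$. The natural strategy is to pass to a single object large enough to carry everything in sight: pick a unimodular $\epsilon_0$-hermitian form $(P,h_0)$ on an object $P\in\catC$ such that $M$ (and the objects witnessing the hyperbolicity over $L$) lie in $\catC\vert_P$, as guaranteed by the hyperbolic construction recalled at the end of subsection~\ref{subsection:transfer}. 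Setting $E=\End_\catC(P)$, a finite dimensional $K$-algebra by the $\dim_K\Hom$ hypothesis, the transfer functor $\Transfer{(P,h_0)}$ is $\epsilon_0$-hermitian, fully faithful, and an equivalence onto $\rproj{E}$ because $\catC$ is pseudo-abelian (so idempotents in $\catC\vert_P$ split). The commuting square of subsection~\ref{subsection:scalar-extension-commutes-with-transfer} then identifies $\scalarExt{L}{K}$ on the $\catC$ side with $\underline{~{}~}\otimes_E E_L$ on the $\rproj{E}$ side, modulo the identification of $\rproj{E_L}$ with $\rproj{E}_L$ from Remark~\ref{RM:scalar-extension-in-module-categories}.

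With these identifications in place, the vanishing of the class of $(M,s)$ over $L$ translates into the vanishing over $E\otimes_K L$ of the class of the corresponding unimodular $\epsilon\epsilon_0$-hermitian form over $(E,\sigma)$, where $\sigma$ is the involution induced by $h_0$. Since $E$ is finite dimensional over $K$ and $\sigma$ is a $K$-involution, \cite[Pr.\ 1.2]{BayLen90} applies and gives that the class already vanishes over $E$; transporting back through the equivalence $\Transfer{}$ yields triviality of $(M,s)$ in $\Witt[\epsilon]{\catC}$. For the second map I would argue identically after replacing $\catC$ by $\TDA{\catC}$: the latter is reflexive, it is $K$-linear with finite dimensional $\Hom$-sets (its endomorphism rings being $K$-subalgebras of products of $\End_\catC(M)\times\End_\catC(N)^\op$, as in the proof of Theorem~\ref{TH:our-cancelation-I}), and by Lemma~\ref{LM:Karoubi}(ii) its pseudo-abelian closure behaves well, so the first part applies verbatim to give injectivity of $\WittF[1]{\scalarExt{L}{K}}$ on $\Witt{\TDA{\catC}}\cong\WittS{\catC}$; Proposition~\ref{PR:F-commutes-with-scalar-extension} ensures this $F$-transported map is exactly $\Witt{\scalarExt{L}{K}}$.

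The main obstacle is bookkeeping rather than conceptual: one must verify carefully that passing to the pseudo-abelian closure $\catC^\circ$ (needed so that $\Transfer{}$ is an equivalence and transfer is available) is compatible with scalar extension, i.e.\ that the square relating $\catC_L^\circ$, $(\catC^\circ)_L$, transfer, and $\scalarExt{L}{K}$ commutes up to the natural identifications. This requires combining Lemma~\ref{LM:Karoubi}(i) (extension of duality preserving functors to closures) with the noncommutation subtleties of Remark~\ref{RM:scalar-extension-in-module-categories}, and checking that $\catC_L^\circ$ is the correct target. Once this compatibility is pinned down, the reduction to \cite[Pr.\ 1.2]{BayLen90} is immediate, so I expect the bulk of the work to be in stating the identifications precisely rather than in any hard estimate.
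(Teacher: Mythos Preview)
Your strategy is the paper's strategy: reduce part~1 to \cite[Pr.\ 1.2]{BayLen90} via transfer, and reduce part~2 to part~1 applied to $\TDA{\catC}$ via Theorem~\ref{TH:equivalence-of-categories} and Proposition~\ref{PR:F-commutes-with-scalar-extension}. The paper carries out exactly this, with one difference of emphasis: what you label ``bookkeeping'' is where the paper does real work in part~2. Concretely, the paper first invokes Proposition~\ref{PR:characterization-of-differences-of-hyp-forms-II} to upgrade Witt-triviality of $(M_L,s_L)$ in $\catC_L^\circ$ to actual hyperbolicity, and then uses the fully faithful functor $J'=GJ^\circ:\TDA{\catC}_L^\circ\to\TDA{\catC_L^\circ}$ (built from Proposition~\ref{PR:F-commutes-with-scalar-extension} and Lemma~\ref{LM:Karoubi}) together with the fact that its image is pseudo-abelian to pull the hyperbolic witness $Q$ back to $\TDA{\catC}_L^\circ$. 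Only then does the first paragraph apply.

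One point in your sketch deserves correction rather than deferral: you cannot choose $P\in\catC$ so that ``the objects witnessing the hyperbolicity over $L$ lie in $\catC|_P$''---those witnesses live in $\catC_L^\circ$, not in $\catC$, and the construction at the end of subsection~\ref{subsection:transfer} only absorbs finitely many objects of $\catC$. What you actually need is that $(M_L,s_L)\equiv 0$ in $\Witt[\epsilon]\bigl((\catC_L^\circ)|_{P_L}\bigr)$, not merely in $\Witt[\epsilon](\catC_L^\circ)$, since transfer only sees the subcategory generated by $P_L$. The clean fix is again Proposition~\ref{PR:characterization-of-differences-of-hyp-forms} (or~\ref{PR:characterization-of-differences-of-hyp-forms-II}): endomorphism rings in $\catC_L^\circ$ are finite dimensional over $L$, hence semiprimary, so Witt-triviality forces $(M_L,s_L)$ to be hyperbolic; the object $Q$ with $M_L\cong Q\oplus Q^*$ is then a summand of $M_L$, hence already in $(\catC_L^\circ)|_{P_L}$ once $M\in\catC|_P$. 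With this adjustment your plan goes through and coincides with the paper's argument.
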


\begin{proof}
    We begin by showing that $\WittF[\epsilon]{\scalarExt{L}{K}}:\Witt[\epsilon]{\catC}\to\Witt[\epsilon]{\catC_L^\circ}$ is injective.
    Let $(M,s)\in\Herm[\epsilon]{\catC}$ be such that $(M_L,s_L)\equiv 0$ in $\Witt[\epsilon]{\catC_L^\circ}$.
    Then there are objects $N,N'\in\catC_L^\circ$ such that $s_L\oplus \Hyp[\epsilon]{N}\simeq \Hyp[\epsilon]{N'}$.
    Let $(U,h)=(M,s)\oplus (N', \Hyp[\epsilon]{N'})$, $E=\End_{\catC^\circ}(U)$ and let
    $\sigma$ be the involution induced by $h$ on $E$. Also set $E_L=E\otimes_KL=\End_{\catC^\circ_L}(U_L)$ and $\sigma_L=\sigma\otimes_K\id_L$.
    Subsection \ref{subsection:scalar-extension-commutes-with-transfer} implies
    that $\scalarExt{L}{K}(\SesqF{\Transfer{(U,h)}}(M,s))=\SesqF{\Transfer{(U_L,h_L)}}(M_L,s_L)\equiv 0$
    in $\Witt[\epsilon]{E_L,\sigma_L}$, and
    by \cite[Prp.\ 1.2]{BayLen90}, this means $\SesqF{\Transfer{(U,h)}}(M,s)\equiv 0$ in $\Witt[\epsilon]{E,\sigma}$
    (here we need $\dim_KE<\infty$).
    Since $\catC$ is pseudo-abelian, the map $\Transfer{(U,h)}:\catC|_U\to \rproj{E}$ is an equivalence of categories,
    hence the induced map $\Witt[\epsilon]{\Transfer{(U,h)}}:\Witt[\epsilon]{\catC|_U}\to \Witt{\rproj{E}}=\Witt[\epsilon]{E,\sigma}$
    is an isomorphism of groups. Therefore, $(M,s)\equiv 0$ in $\Witt[\epsilon]{\catC|_U}$. In particular, the same identity holds in
    $\Witt[\epsilon]{\catC}$.

    Now let $(M,s)\in\Sesq{\catC}$ be such
    that $(M_L,s_L)\equiv 0$ in  $\WittS{\catC_L^\circ}$.
    Then by Proposition~\ref{PR:characterization-of-differences-of-hyp-forms-II}, $(M_L,s_L)$ is hyperbolic
    in $\catC_L^\circ$ (but, a-priori, not in $\catC_L$).
    Let $F$ be the functor defined in Theorem~\ref{TH:equivalence-of-categories} and let $J$ be the functor
    $\TDA{\catC}_L\to \TDA{\catC_L}$ of Proposition~\ref{PR:F-commutes-with-scalar-extension}.
    By the lemma, there is a fully
    faithful $1$-hermitian duality preserving functor $J':=GJ^\circ:\TDA{\catC}_L^\circ\to \TDA{\catC_L^\circ}$.
    Since $(M_L,s_L)$ is hyperbolic
    in $\catC_L^\circ$, there is $Q\in\TDA{\catC_L^\circ}$ such that $F(M_L,s_L)\simeq(Q\oplus Q^*,\Hyp{Q})$.
    Let $Z(M,s):=(M,M,s^*\omega_M,s)$ and $Z(M_L,s_L)=(M_L,M_L,s_L^*\omega_{M_L},s_L)$.
    Recall that
    $F(M_L,s_L)=F\SesqF{\scalarExt{L}{K}}(M,s)=\SesqF{J}\SesqF{\scalarExt{L}{K}}F(M,s)$
    (Proposition~\ref{PR:F-commutes-with-scalar-extension}) and hence $Q\oplus Q^*\simeq Z(M_L,s_L)=J(Z(M,s)_L)=J'(Z(M,s)_L)$.
    As $J'$ is fully faithful and its image is pseudo-abelian,
    we may assume $Q=J' H$ for some $H\in \TDA{\catC}_L^\circ$.
    We now have $\SesqF{J'}(H\oplus H^*,\Hyp{H})=(Q\oplus Q^*,\Hyp{Q})\simeq F(M_L,s_L)=J' \scalarExt{L}{K}F(M,s)$, hence
    $(H\oplus H^*,\Hyp{H})\simeq \scalarExt{L}{K}F(M,s)$ in $\TDA{\catC}_L^\circ$.
    In particular, $\scalarExt{L}{K}F(M,s)\equiv 0$ in $\Witt{\TDA{\catC}_L^\circ}$. By the previous paragraph, this means
    $F(M,s)\equiv0$  in $\Witt{\TDA{\catC}}$ and hence, $(M,s)\equiv 0$ in $\WittS{\catC}$.
\end{proof}

We also have the following weaker version of Springer's Theorem that works without assuming
$\catC$ is pseudo-abelian.

\begin{thm}
    Let $(\catC,*,\omega)$ be a  $K$-linear hermitian category such that\linebreak
    $\dim_K\Hom(M,M')$ is finite
    for all $M,M'\in\catC$. Then the map
    $\WittF[\epsilon]{\scalarExt{L}{K}}:\Witt[\epsilon]{\catC}\to\Witt[\epsilon]{\catC_L}$
    is injective.
\end{thm}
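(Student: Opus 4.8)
The plan is to sidestep the pseudo-abelian machinery of the preceding theorem and reduce directly to the isometry form of Springer's theorem, Theorem~\ref{TH:our-Springer}. The guiding idea is that the witnesses to Witt-triviality of $(M_L,s_L)$ over $\catC_L$ can always be chosen to be scalar extensions of hyperbolic forms already defined over $\catC$, simply because every object of $\catC_L$ is itself a scalar extension. Once this is arranged, the vanishing of the Witt class produces an isometry between the scalar extensions of two forms defined over $\catC$, and Theorem~\ref{TH:our-Springer} descends that isometry to $\catC$.

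In detail, let $\xi\in\Witt[\epsilon]{\catC}$ lie in the kernel of $\WittF[\epsilon]{\scalarExt{L}{K}}$ and represent it by a single unimodular $\epsilon$-hermitian form $(M,s)\in\Herm[\epsilon]{\catC}$, so that $(M_L,s_L)$ is trivial in $\Witt[\epsilon]{\catC_L}$. Unwinding the definition of the zero class, and using that an orthogonal sum of hyperbolic forms is again hyperbolic, I get reflexive objects $Q_1,Q_2\in\catC_L$ with $(M_L,s_L)\oplus\Hyp[\epsilon]{Q_1}\simeq\Hyp[\epsilon]{Q_2}$. By the very construction of $\catC_L=\catC\otimes_KL$, each object is a formal symbol $C_L$; write $Q_j=(C_j)_L$. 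I then claim that $C_j$ is \emph{already} reflexive in $\catC$: reflexivity of $Q_j$ says $(\omega_{C_j})_L$ is invertible, but invertibility of a morphism between objects with finite-dimensional $\Hom$-spaces is the solvability of a finite $K$-linear system in the unknown inverse, and solvability of such a system is unchanged by the field extension $L/K$; hence $\omega_{C_j}$ is invertible in $\catC$. Since $\scalarExt{L}{K}$ carries $\Hyp[\epsilon]{C_j}$ to $\Hyp[\epsilon]{(C_j)_L}$ (apply the functor entrywise to the matrix defining $\Hyp[\epsilon]{C_j}$), the displayed isometry becomes
\[
\scalarExt{L}{K}\bigl((M,s)\oplus\Hyp[\epsilon]{C_1}\bigr)\;\simeq\;\scalarExt{L}{K}\bigl(\Hyp[\epsilon]{C_2}\bigr).
\]

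Finally, I would apply Theorem~\ref{TH:our-Springer} with the single hermitian structure $(*,\omega)$ (a unimodular $\epsilon$-hermitian form being in particular a sesquilinear form): the two sides above are the scalar extensions of the forms $(M,s)\oplus\Hyp[\epsilon]{C_1}$ and $\Hyp[\epsilon]{C_2}$ over $\catC$, so these two forms are already isometric over $\catC$. This is exactly the statement that $(M,s)$ is trivial in $\Witt[\epsilon]{\catC}$, i.e.\ $\xi=0$, giving injectivity. The only step that is more than bookkeeping—and the point I expect to be the main obstacle—is the descent of the hyperbolic witnesses from $\catC_L$ to $\catC$, namely the passage from the reflexive $Q_j$ to genuine reflexive objects $C_j$ of $\catC$; this is what places the isometry inside the essential image of $\scalarExt{L}{K}$ and thereby makes Theorem~\ref{TH:our-Springer}, whose statement only compares a form over $\catC$ with its own extension, directly applicable. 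Note that $\catC$ need not be assumed pseudo-abelian here, precisely because the argument never leaves the image of the scalar extension functor.
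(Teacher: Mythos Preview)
Your proof is correct and follows essentially the same route as the paper: both observe that every object of $\catC_L$ is by construction of the form $N_L$, rewrite the hyperbolic witnesses as $\Hyp[\epsilon]{(C_j)_L}=(\Hyp[\epsilon]{C_j})_L$, and then invoke Theorem~\ref{TH:our-Springer} to descend the resulting isometry. You are in fact more careful than the paper on one point: the paper silently writes $\Hyp[\epsilon]{N}$ for $N\in\catC$ without checking that $N$ is reflexive, whereas you supply the (valid) argument that invertibility of $(\omega_{C_j})_L$ forces invertibility of $\omega_{C_j}$ via faithfully flat descent on the finite-dimensional $\Hom$-spaces.
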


\begin{proof}
    Let $(M,s)\in\Herm[\epsilon]{\catC}$ be such that $(M_L,s_L)\equiv 0$ in $\Witt[\epsilon]{\catC_L}$.
    Then there are objects $N_L,N'_L$ such that $s_L\oplus \Hyp[\epsilon]{N_L}\simeq \Hyp[\epsilon]{N'_L}$.
    Since $\Hyp[\epsilon]{N_L}=(\Hyp[\epsilon]{N})_L$ and $\Hyp[\epsilon]{N'_L}=(\Hyp[\epsilon]{N'})_L$,
    we have $(s\oplus \Hyp[\epsilon]{N})_L\simeq (\Hyp[\epsilon]{N'})_L$. By Theorem~\ref{TH:our-Springer},
    this means $s\oplus \Hyp[\epsilon]{N}\simeq \Hyp[\epsilon]{N'}$, hence $(M,s)\equiv 0$ in $\Witt[\epsilon]{\catC}$.
\end{proof}

\subsection{Weak Hasse Principle}

In this final subsection, we prove a version of the \emph{weak
Hasse principle} for systems of sesquilinear forms over hermitian categories.
Recall that the weak Hasse principle asserts that two quadratic forms
over a global field $k$ are isometric if and only if they are isometric over all completions
of $k$. This actually   fails for  systems of quadratic forms, and we refer
the reader to \cite{Bayer85} and  \cite{Bayer87} for necessary and sufficient conditions for the weak Hasse principle to hold
in this case.
A weak Hasse principle for \emph{sesquilinear} forms defined over a skew field with a unitary involution was obtained in \cite{BayerMoldovan12}.

\medskip

Let $K$ be a commutative ring admitting an involution $\sigma$, and let $k$ be the fixed ring of $\sigma$.
Let
$\catC$ be an additive $K$-category.
A hermitian structure
$(*,\omega)$ on $\catC$ is called \emph{$(K,\sigma)$-linear} if $(fa)^*=f^*\sigma(a)$ for all $a\in K$ and any morphism $f$ in $\catC$. (This
means that the functor $*$ is $k$-linear.)
In this case, $\End(M)$ is a $K$-algebra
for all $M\in \catC$, and for any unimodular $\epsilon$-hermitian form $(M,s)$ over $\catC$, the restriction
of the involution $f\mapsto s^{-1}f^*s$ to $K\cdot \id_M$ is $\sigma$.

Suppose now that $K$ is a global field of characteristic not $2$
admitting an involution $\sigma$ of the second kind with fixed field $k$, and that $\catC$ admits a nonempty family
of $(K,\sigma)$-linear hermitian structures $\{*_i,\omega_i\}_{i\in I}$.
For every prime spot $p$ of $k$, let $k_p$ be the completion of $k$ at $p$ and set $K_p=K\otimes_k k_p$,
$\sigma_p=\sigma\otimes_k \id_{k_p}$ and $\catC_p=\catC\otimes_k k_p$.
Then each of the hermitian structures $(*_i,\omega_i)$ gives rise to a $(K_p,\sigma_p)$-linear hermitian
structure on $\catC_p$, which we also denote by $(*_i,\omega_i)$.

\begin{thm}\label{TH:Hasse-principal-for-systems}
    Let $K$ be a global field of characteristic not $2$ admitting an  involution $\sigma$ \emph{of the second kind} with fixed field $k$.
    Let $\catC$ be a $K$-category such that $\dim_K\Hom(M,N)$ is finite for all $M,N\in\catC$, and assume
    there is a nonempty family $\{*_i,\omega_i\}_{i\in I}$
    of $(K,\sigma)$-linear hermitian structures on $\catC$.
    Then the weak Hasse principle (with respect to  $k$) holds  for systems of sesquilinear forms over $(\catC,\{*_i,\omega_i\})$.
    That is, two systems of sesquilinear forms over $(\catC,\{*_i,\omega_i\})$ are isometric
    if and only if they are isometric after applying $\SesqF{\scalarExt{k_p}{k}}$ for all $p$.
\end{thm}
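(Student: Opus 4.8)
The plan is to transport the problem through the equivalence $F$ of Theorem~\ref{TH:equivalence-of-categories-for-systems} and then to reduce, by transfer, to the classical Hasse principle for hermitian forms over a concrete algebra with involution of the second kind. Write $\catD:=\TDA[I]{\catC}$; this is a \emph{reflexive} hermitian category, and I would first check that it inherits a $(K,\sigma)$-linear hermitian structure from the $(K,\sigma)$-linear structures $\{*_i,\omega_i\}$ on $\catC$. Concretely, $\catD$ becomes a $K$-category under the \emph{twisted} action $a\cdot(\phi,\psi^\op)=(a\phi,(\sigma(a)\psi)^\op)$: the twist by $\sigma$ is exactly what keeps the defining relations $\psi^{*_i}f_i=f'_i\phi$ in force, since $(\sigma(a)\psi)^{*_i}f_i=\sigma(\sigma(a))\psi^{*_i}f_i=a\,f'_i\phi=f'_i(a\phi)$. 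With respect to this action the duality $(\phi,\psi^\op)^*=(\psi,\phi^\op)$ satisfies $(a\cdot X)^*=\sigma(a)\cdot X^*$, so $(*,\omega)$ on $\catD$ is $(K,\sigma)$-linear. Moreover $\dim_K\Hom_{\catD}$ is finite, as every $\Hom$-set in $\catD$ is a $K$-submodule of the finite-dimensional $K$-space $\Hom_\catC(M,M')\oplus\Hom_\catC(N',N)$ (with the twisted action). Since $\catC$ and the $*_i$ are $k$-linear, Corollary~\ref{CR:F-for-systems-preserves-isometry-after-scalar-extension} (taken over the base $k$, with $R=k_p$) shows that two systems are isometric, respectively isometric after applying $\SesqF{\scalarExt{k_p}{k}}$, precisely when their images under $F$ are; so it suffices to prove the weak Hasse principle for unimodular $1$-hermitian forms over $\catD$.

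Given two such forms $X:=F(M,\{s_i\})$ and $Y:=F(M',\{s'_i\})$ that become isometric over every $k_p$, I would form the unimodular $1$-hermitian form $(U,h):=X\perp Y$ and apply transfer (subsection~\ref{subsection:transfer}) with respect to $(U,h)$, setting $E:=\End_{\catD}(U)$. Then $E$ is a finite-dimensional $K$-algebra, $h$ induces the involution $\tau(f)=h^{-1}f^*h$ on $E$, and because $*$ is $(K,\sigma)$-linear the restriction of $\tau$ to $K\cdot\id_U$ is $\sigma$; thus $\tau$ is an involution \emph{of the second kind} relative to $K/k$. The functor $\Transfer{(U,h)}\colon\catD\vert_U\to\rproj{E}$ is fully faithful, duality preserving and $1$-hermitian, so the induced functor on hermitian forms is fully faithful and therefore \emph{reflects} isometries; by subsection~\ref{subsection:scalar-extension-commutes-with-transfer} it also commutes with $\scalarExt{k_p}{k}$ (with $E\otimes_k k_p$ carrying $\tau\otimes\id$). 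Consequently $X\simeq Y$ over $\catD$ if and only if the transferred forms $\widetilde X,\widetilde Y$ are isometric over $(E,\tau)$, while the local hypothesis becomes: $\widetilde X$ and $\widetilde Y$ are isometric over $(E\otimes_k k_p,\tau\otimes\id)$ for all $p$.

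This reduces the theorem to the \emph{classical weak Hasse principle for unimodular hermitian forms over a finite-dimensional $k$-algebra with involution of the second kind, $k$ a global field} — the Hasse-principle analogue of the Springer statement \cite[Th.~2.1]{BayLen90} used in Theorem~\ref{TH:our-Springer}. I expect this last step to be the main obstacle, and it is here that the assumption that $\sigma$ is \emph{of the second kind} is indispensable (for involutions of the first kind the weak Hasse principle genuinely fails in general). If only the \emph{semisimple} case of this principle is available, I would bridge the gap by reducing modulo the Jacobson radical: since $\Char k\neq 2$ we have $2\in\units{E}$, so the reduction map along $E\to\bar E:=E/\rad(E)$ is injective on isometry classes of unimodular hermitian forms (the kernel $\rad(E)$ being nilpotent and $2$ a unit). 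Applying the semisimple Hasse principle for unitary involutions over global fields to $\bar E$ gives $\overline{\widetilde X}\simeq\overline{\widetilde Y}$, lifting yields $\widetilde X\simeq\widetilde Y$ over $(E,\tau)$, and unwinding the reductions produces $X\simeq Y$ over $\catD$ and hence $(M,\{s_i\})\simeq(M',\{s'_i\})$.
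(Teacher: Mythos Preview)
Your proof is correct and follows the same overall strategy as the paper: reduce via $F$ and Corollary~\ref{CR:F-for-systems-preserves-isometry-after-scalar-extension} to unimodular $1$-hermitian forms over $\catD=\TDA[I]{\catC}$, verify that $\catD$ is $(K,\sigma)$-linear with finite-dimensional $\Hom$-sets (your explicit twisted $K$-action $a\cdot(\phi,\psi^\op)=(a\phi,(\sigma(a)\psi)^\op)$ is exactly what the paper leaves implicit), and then appeal to a Hasse principle for finite-dimensional $k$-algebras with unitary involution.

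The only organizational difference is in this last step. You apply full transfer $\Transfer{(U,h)}$ with $U$ carrying $X\perp Y$ and invoke a Hasse principle for hermitian forms over $(E,\tau)$; the paper instead first uses a Noether--Deuring type lemma (Lemma~\ref{LM:scalar-ext-in-additive-cat}) to identify the underlying objects $Z\cong Z'$ in $\catD$, then parametrizes unimodular hermitian forms on the fixed object $Z$ by the pointed set $H(\tau,\units{E})$ with $E=\End_{\catD}(Z)$, so that the Hasse principle becomes precisely the injectivity of $H(\tau,\units{E})\to\prod_p H(\tau_p,\units{E_p})$, for which \cite[\S9]{BayerMoldovan12} is cited directly. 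Your route packages the same two ingredients---Noether--Deuring for the underlying modules plus this cohomological injectivity---into one black-box appeal, and your Jacobson-radical reduction is a sound way to reach the semisimple case; the paper's formulation has the practical advantage of pointing to an explicit reference for the decisive cohomological step rather than an unnamed ``classical'' result.
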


We  will need the following lemma. (The lemma seems to be known, but we could not find an explicit reference, and
hence included here an ad-hoc proof.)

\begin{lem}\label{LM:scalar-ext-in-additive-cat}
    Let $L/K$ be any field extension, and let
    $\catC$ be an additive $K$-category such that $\dim_K\Hom_{\catC}(M,N)$ is finite
    for all $M,N\in\catC$. Then for all $N,M\in\catC$, we have $N\cong M$ $\iff$ $N_L\cong M_L$.
\end{lem}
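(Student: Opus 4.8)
The forward implication is immediate, since the scalar extension functor $\scalarExt{L}{K}\colon\catC\to\catC_L$ carries any isomorphism $M\xrightarrow{\sim}N$ to an isomorphism $M_L\xrightarrow{\sim}N_L$. The plan for the converse is to translate the problem into a statement about modules over a finite-dimensional algebra and then prove a Noether--Deuring type descent by hand. I would set $E=\End_{\catC}(M\oplus N)$; by hypothesis this is a finite-dimensional $K$-algebra, and the decomposition $M\oplus N$ furnishes orthogonal idempotents $e_M,e_N\in E$. A standard computation identifies an isomorphism $M\cong N$ in $\catC$ with an isomorphism of right ideals $P:=e_M E\cong e_N E=:Q$, and likewise, writing $E_L=\End_{\catC_L}(M_L\oplus N_L)=E\otimes_K L$ and noting $(e_M E)\otimes_K L=P_L$, it identifies $M_L\cong N_L$ with $P_L\cong Q_L$ over $E_L$. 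Thus it suffices to prove: if $P,Q$ are finitely generated projective right $E$-modules with $P_L\cong Q_L$ over $E_L$, then $P\cong Q$ over $E$.

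First I would record that $\dim_K P=\dim_L P_L=\dim_L Q_L=\dim_K Q=:n$. The $E$-module isomorphisms $P\to Q$ are exactly the elements of the finite-dimensional $K$-space $V:=\Hom_E(P,Q)$ that are bijective as $K$-linear maps; fixing $K$-bases of $P$ and $Q$, this bijectivity is the non-vanishing of a single polynomial function $\delta$ (the determinant in these bases) on $V$. The given isomorphism over $L$ is a point of $V\otimes_K L=\Hom_{E_L}(P_L,Q_L)$ at which $\delta$ does not vanish (here I use the tensor--Hom identification of Remark~\ref{RM:scalar-extension-in-module-categories}, valid as $P$ is finitely generated projective), so $\delta$ is not the zero polynomial over $L$, hence not over $K$. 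A nonzero polynomial over $K$ has a non-root over $\bar K$, whose coordinates generate a finite extension $K'/K$; the corresponding element of $V\otimes_K K'=\Hom_{E_{K'}}(P_{K'},Q_{K'})$ is then an $E_{K'}$-isomorphism $P_{K'}\cong Q_{K'}$.

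It remains to descend from the finite extension $K'$ to $K$. Restricting scalars along $K\hookrightarrow K'$ and using $K'\cong K^{\,d}$ as a $K$-module, where $d=[K':K]$, gives $E$-module isomorphisms $P_{K'}\cong P^{\,d}$ and $Q_{K'}\cong Q^{\,d}$, so that $P^{\,d}\cong Q^{\,d}$ over $E$. Since finite-dimensional modules over the finite-dimensional algebra $E$ satisfy the Krull--Schmidt theorem, I may cancel the common multiplicity $d$ to conclude $P\cong Q$, and therefore $M\cong N$. The one genuinely nontrivial point is the passage from ``isomorphic over the possibly infinite or transcendental extension $L$'' to ``isomorphic over a finite extension''; a naive search for a $K$-rational non-root of $\delta$ fails when $K$ is finite, which is precisely why I would pass to $\bar K$ and then recover descent to $K$ through the Krull--Schmidt step rather than trying to produce an isomorphism over $K$ directly.
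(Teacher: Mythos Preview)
Your proof is correct and complete; this is the classical Noether--Deuring argument.  Both your proof and the paper's begin with the same reduction, via $\Hom_{\catC}(M\oplus N,\,\underline{~{}~}\,)$, to finitely generated projective modules over the finite-dimensional algebra $E=\End_{\catC}(M\oplus N)$.  From there the two diverge.  The paper first passes to $E/\Jac(E)$ (projectives over a semiperfect ring are determined by their reductions), decomposes each simple $V_i$ over $L$ into indecomposables $W_{ij}$, observes that the $W_{ij}$ for different $i$ are non-isomorphic even as $E$-modules, and then applies Krull--Schmidt \emph{over $E_L$} to match multiplicities.  You instead view the isomorphisms $P\to Q$ as the nonvanishing locus of a single determinant polynomial on $\Hom_E(P,Q)$, use the $L$-point to conclude this polynomial is nonzero, find a $\bar K$-point and hence a point over a finite extension $K'/K$, and then descend via restriction of scalars and Krull--Schmidt \emph{over $E$}.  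Your route avoids any structural analysis of how simples split under extension and handles the case of finite $K$ cleanly by the detour through $\bar K$; the paper's route is slightly more explicit about the module-theoretic content but requires tracking the $W_{ij}$.  Either is a perfectly good proof of this standard lemma.
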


\begin{proof}[Proof (sketch)]
    By applying $\Hom_{\catC}(M\oplus N,\,\underline{~{}~}\,)$, we may assume $M$ and $N$
    are f.g.\ projective right modules over $R:=\End(M\oplus N)$, which is a finite dimensional $K$-algebra
    by assumption.
    Let $J$ be the Jacobson radical of $R$. By tensoring with $R/J$, we may assume $R$ is semisimple.
    Let $\{V_i\}_i$ be a complete list of the simple right $R$-modules and write $(V_i)_L=\bigoplus_j W_{ij}^{n_{ij}}$
    with $\{W_{ij}\}_j$ being  pairwise non-isomorphic indecomposable $R_L$-modules.
    The $R_L$-modules $\{W_{ij}\}_{i,j}$ are pairwise non-isomorphic because
    $W_{ij}$ and $W_{i'j'}$ are non-isomorphic as $R$-modules when $i\neq i'$
    ($W_{ij}$ is isomorphic as an $R$-module to a direct sum of copies of $V_i$).
    Assume $M_L\cong N_L$ and write $M\cong\bigoplus_i V_i^{m_i}$,
    $M\cong\bigoplus_i V_i^{m'_i}$. Then
    $\bigoplus_{i,j} W_{ij}^{m_in_{ij}}\cong M_L\cong M_L\cong \bigoplus_{i,j} W_{ij}^{m'_in_{ij}}$.
    By the Krull-Schmidt Theorem (see for instance \cite[pp.\ 237 ff.]{Ro88}),
    we have $m_in_{ij}=m'_in_{ij}$ for all $i,j$, hence $m_i=m'_i$
    and $M\cong N$.
\end{proof}

\begin{proof}[Proof of Theorem~\ref{TH:Hasse-principal-for-systems}]
    By Corollary~\ref{CR:F-for-systems-preserves-isometry-after-scalar-extension}, it is enough
    to verify the Hasse principle (with respect to $k$) for $1$-hermitian forms in the category
    $\catG:=\TDA[I]{\catC}$.
    Our assumptions imply that $\catG$ is a $(K,\sigma)$-linear category
    such that $\dim_K\Hom(Z,Z')$ is finite for all $Z,Z'\in\catG$.
    We now use  the ideas developed in \cite[\S9]{BayerMoldovan12}.

    Let $(Z,h)$, $(Z',h')$
    be two unimodular $1$-hermitian forms over $\catG$ such
    that $\SesqF{\scalarExt{k_p}{k}}(Z,h)\simeq\SesqF{\scalarExt{k_p}{k}}(Z',h')$
    for all $p$. By Lemma~\ref{LM:scalar-ext-in-additive-cat}, this is implies that $Z\cong Z'$, so we may assume $Z=Z'$.

    Fix a $1$-hermitian form $h_0$ on $Z$ and let $\tau$ be the involution induced
    by $h_0$ on $E:=\End(Z)$ (i.e.\ $\tau(x)=h_0^{-1}x^*h_0$).
    There is an equivalence relation on the elements of $E$  defined by $x\sim y$ $\iff$ there exists
    an invertible $z\in E$ such that $x=zy\tau(z)$. Let $H(\tau,\units{E})$ be the set of equivalence
    classes of invertible elements $x \in \units{E}$ for which $x=\tau(x)$. In
    the same manner as in  \cite[Th.\ 5.1]{BayerMoldovan12}, we see that there is a one-to-one
    correspondence between isometry classes of unimodular \mbox{$1$-hermitian} forms on $Z$ and
    elements $H(\tau,\units{E})$. It is given by $(Z,t)\mapsto h_0^{-1}t$.

    Applying the same argument to $Z_p=\scalarExt{k_p}{k} Z\in\catG_p$, we see that the weak Hasse principle
    is equivalent to the injectivity of the standard map
    \[
    \Phi:H(\tau,\units{E})\to \prod_p H(\tau_p,\units{E_p})
    \]
    where $E_p=\End(Z_p)=E\otimes_k k_p$ and $\tau_p=\tau\otimes_k \id_{k_p}$.
    Observe that since $\catG$ is $(K,\sigma)$-linear, $\tau$ is a unitary involution (and in fact, $\tau|_K=\sigma$).
    By \cite[\S9]{BayerMoldovan12}, this means that $\Phi$ is injective, hence the weak Hasse principal
    holds.
\end{proof}

\begin{cor}
    Let $K$ be a global field of characteristic not $2$ admitting an involution $\sigma$ \emph{of the second kind} with fixed field $k$.
    Let $A$ be a finite dimensional $K$-algebra admitting a nonempty family of involutions $\{\sigma_i\}_{i\in I}$
    such that $\sigma_i|_K=\sigma$. Then
    the weak Hasse principle (with respect to $k$) holds  for systems of sesquilinear forms over $(A,\{\sigma_i\})$.
\end{cor}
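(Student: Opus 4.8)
The plan is to deduce this from Theorem~\ref{TH:Hasse-principal-for-systems} by choosing the hermitian category appropriately. I would let $\catC$ be the category of finite type right $A$-modules, regarded as a $K$-category (the $K$-module structure on each $\Hom_A(M,N)$ coming from the central image of $K$ in $A$). Each involution $\sigma_i$ induces, exactly as in Example~\ref{EX:hermitian-category-obtained-from-a-ring-with-involution}, a hermitian structure $(*_i,\omega_i)$ on $\rMod{A}$; since $A$ is finite dimensional over $K$ and $M$ is of finite type, $M^{*_i}=\Hom_A(M,A)$ is again finite dimensional over $K$, hence of finite type, so $\catC$ is stable under each $*_i$ and inherits the whole family $\{*_i,\omega_i\}_{i\in I}$. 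Moreover, for $M,N$ of finite type the space $\Hom_A(M,N)$ is a $K$-subspace of the finite dimensional space $\Hom_K(M,N)$, so $\dim_K\Hom_{\catC}(M,N)<\infty$. Thus $\catC$ satisfies the standing hypotheses of Theorem~\ref{TH:Hasse-principal-for-systems}.

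The key verification is that each $(*_i,\omega_i)$ is $(K,\sigma)$-linear, i.e.\ that $(fa)^{*_i}=f^{*_i}\sigma(a)$ for every $a\in K$ and every morphism $f$. I would check this by direct computation on a right $A$-module homomorphism $f:M\to N$: for $\phi\in N^{*_i}$ and $m\in M$ one has $((fa)^{*_i}\phi)(m)=\phi(f(m)a)=\phi(f(m))\,a=(f^{*_i}\phi)(m)\,a$, using that $\phi$ is right $A$-linear. On the other side, $f^{*_i}\sigma(a)$ multiplies $f^{*_i}\phi$ by $\sigma(a)\in K$ in the right $A$-module $M^{*_i}$, which by the definition of that module structure produces the scalar $\sigma_i(\sigma(a))$. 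The decisive point is that $\sigma_i|_K=\sigma$ and $\sigma^2=\id$, whence $\sigma_i(\sigma(a))=a$ and the two sides agree. With this established, Theorem~\ref{TH:Hasse-principal-for-systems} applies to $(\catC,\{*_i,\omega_i\})$.

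It then remains to reconcile the abstract scalar extension $\SesqF{\scalarExt{k_p}{k}}$ appearing in the theorem with the honest extension of forms to $A\otimes_k k_p$. Here I would invoke Remark~\ref{RM:scalar-extension-in-module-categories} with base ring $k$, algebra $A$ and $R=k_p$: since $k_p/k$ is a field extension (so $k_p$ is $k$-flat) and $A$ is a finite dimensional $K$-algebra, hence Artinian and Noetherian, every finite type $A$-module is finitely presented, so the comparison functor is full and faithful and $\catC_{k_p}$ is identified with the corresponding full subcategory of $\rMod{(A\otimes_k k_p)}$. Under this identification $\SesqF{\scalarExt{k_p}{k}}(M,\{s_i\})$ is precisely the system $(M\otimes_k k_p,\{s_i\otimes\id\})$ over $(A\otimes_k k_p,\{\sigma_i\otimes\id\})$, and the extended involutions still satisfy $(\sigma_i\otimes\id)|_{K\otimes_k k_p}=\sigma\otimes\id$, matching the notation $K_p,\sigma_p$ of the theorem. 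Consequently the conclusion of Theorem~\ref{TH:Hasse-principal-for-systems} is exactly the asserted weak Hasse principle over $(A,\{\sigma_i\})$.

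The additive and $K$-linear bookkeeping is routine; the one step that genuinely needs care is the $(K,\sigma)$-linearity computation, where the interplay between the right-module twist defining $M^{*_i}$ and the central $K$-scalars must be tracked so that the identity $\sigma_i(\sigma(a))=a$ is applied correctly. The scalar-extension comparison is then a direct appeal to Remark~\ref{RM:scalar-extension-in-module-categories}, once the flatness and finite-presentation hypotheses are noted.
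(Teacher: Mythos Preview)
Your proof is correct and is precisely the argument the paper intends: the corollary is stated without proof because it is an immediate specialization of Theorem~\ref{TH:Hasse-principal-for-systems} to the category of finite type right $A$-modules, and you have carefully supplied the routine verifications (finite-dimensionality of $\Hom$-sets, stability under the $*_i$, the $(K,\sigma)$-linearity computation using $\sigma_i|_K=\sigma$, and the comparison with concrete scalar extension via Remark~\ref{RM:scalar-extension-in-module-categories}). There is nothing to add.
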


\section*{Acknowledgements}

The third named author would like to thank Emmanuel Lequeu for many interesting and useful discussions.

\bibliographystyle{plain}
\bibliography{Herm_Categories_Bib}

\begin{thebibliography}{10}

\bibitem{BayKeaWil89}
E.~Bayer-Fluckiger, C.~Kearton, and S.~M.~J. Wilson.
\newblock Hermitian forms in additive categories: finiteness results.
\newblock {\em J. Algebra}, 123(2):336--350, 1989.

\bibitem{BayLen90}
E.~Bayer-Fluckiger and H.~W. Lenstra, Jr.
\newblock Forms in odd degree extensions and self-dual normal bases.
\newblock {\em Amer. J. Math.}, 112(3):359--373, 1990.

\bibitem{Bayer85}
Eva Bayer-Fluckiger.
\newblock Intersections de groupes orthogonaux et principe de {H}asse faible
  pour les syst\`emes de formes quadratiques sur un corps global.
\newblock {\em C. R. Acad. Sci. Paris S\'er. I Math.}, 301(20):911--914, 1985.

\bibitem{Bayer87}
Eva Bayer-Fluckiger.
\newblock Principe de {H}asse faible pour les syst\`emes de formes
  quadratiques.
\newblock {\em J. Reine Angew. Math.}, 378:53--59, 1987.

\bibitem{BayerMoldovan12}
Eva Bayer-Fluckiger and Daniel~Arnold Moldovan.
\newblock Sesquilinear forms over rings with involution.
\newblock {\em J. Pure Appl. Algebra}, 218(3):417--423, 2014.

\bibitem{Bj71}
J.-E. Bj{\"o}rk.
\newblock Conditions which imply that subrings of semiprimary rings are
  semiprimary.
\newblock {\em J. Algebra}, 19:384--395, 1971.

\bibitem{CaDi93}
Rosa Camps and Warren Dicks.
\newblock On semilocal rings.
\newblock {\em Israel J. Math.}, 81(1-2):203--211, 1993.

\bibitem{Fain}
L.~Fainsilber.
\newblock Formes hermitiennes sur des alg\`ebres sur des anneaux locaux.
\newblock In {\em Th\'eorie des nombres, {A}nn\'ees 1992/93--1993/94}, Publ.
  Math. Fac. Sci. Besan\c con, page~10. Univ. Franche-Comt\'e, Besan\c con,
  199?

\bibitem{Fi12}
Uriya~A. First.
\newblock Semi-invariant subrings.
\newblock {\em J. Algebra}, 378:103--132, 2013.

\bibitem{Hi60}
Yukitoshi Hinohara.
\newblock Note on non-commutative semi-local rings.
\newblock {\em Nagoya Math. J.}, 17:161--166, 1960.

\bibitem{Kar78}
Max Karoubi.
\newblock {\em {$K$}-theory}, volume 226 of {\em Grundlehren der Mathematischen
  Wissenschaften [Fundamental Principles of Mathematical Sciences]}.
\newblock Springer-Verlag, Berlin, 1978.

\bibitem{Kel88}
Bernhard Keller.
\newblock A remark on quadratic spaces over noncommutative semilocal rings.
\newblock {\em Math. Z.}, 198(1):63--71, 1988.

\bibitem{Kne69}
Manfred Knebusch.
\newblock Isometrien \"uber semilokalen {R}ingen.
\newblock {\em Math. Z.}, 108:255--268, 1969.

\bibitem{Kn91}
Max-Albert Knus.
\newblock {\em Quadratic and {H}ermitian forms over rings}, volume 294 of {\em
  Grundlehren der Mathematischen Wissenschaften [Fundamental Principles of
  Mathematical Sciences]}.
\newblock Springer-Verlag, Berlin, 1991.
\newblock With a foreword by I. Bertuccioni.

\bibitem{QuSchSch79}
H.-G. Quebbemann, W.~Scharlau, and M.~Schulte.
\newblock Quadratic and {H}ermitian forms in additive and abelian categories.
\newblock {\em J. Algebra}, 59(2):264--289, 1979.

\bibitem{Reiter75}
H.~Reiter.
\newblock Witt's theorem for noncommutative semilocal rings.
\newblock {\em J. Algebra}, 35:483--499, 1975.

\bibitem{RiSh76}
C.~Riehm and M.~A. Shrader-Frechette.
\newblock The equivalence of sesquilinear forms.
\newblock {\em J. Algebra}, 42(2):495--530, 1976.

\bibitem{Ri74}
Carl Riehm.
\newblock The equivalence of bilinear forms.
\newblock {\em J. Algebra}, 31:45--66, 1974.

\bibitem{Ro88}
Louis~H. Rowen.
\newblock {\em Ring theory. {V}ol. {I}}, volume 127 of {\em Pure and Applied
  Mathematics}.
\newblock Academic Press Inc., Boston, MA, 1988.

\bibitem{Sch85QuadraticAndHermitianForms}
Winfried Scharlau.
\newblock {\em Quadratic and {H}ermitian forms}, volume 270 of {\em Grundlehren
  der Mathematischen Wissenschaften [Fundamental Principles of Mathematical
  Sciences]}.
\newblock Springer-Verlag, Berlin, 1985.

\end{thebibliography}

\end{document}